\newtheorem{theorem}{Theorem}[section]
\newtheorem{lemma}[theorem]{Lemma}
\newtheorem{proposition}[theorem]{Proposition}
\newtheorem{corollary}[theorem]{Corollary}
\theoremstyle{definition}
\newtheorem{definition}[theorem]{Definition}
\newcommand{\ad}{\mathrm{ad}}
\newcommand{\Ad}{\mathrm{Ad}}
\DeclareMathOperator{\Hol}{\mathrm{Hol}}
\DeclareMathOperator{\Aut}{\mathrm{Aut}}
\DeclareMathOperator{\GLin}{\mathrm{GL}}
\DeclareMathOperator{\PGL}{\mathrm{PGL}}
\newcommand{\MC}[1]{\omega_{{}_{#1}}}
\newcommand{\Rt}[1]{\operatorname{R}_{#1}}
\DeclareSymbolFont{tipa}{T3}{cmr}{m}{sl}
\DeclareMathSymbol{\kgf}{\mathord}{tipa}{'255}
\title[Curvature trees and essential automorphisms]{Higher rank parabolic geometries with essential automorphisms and nonvanishing curvature}
\author{Jacob W. Erickson}
\thanks{Partially supported by the Brin Graduate Fellowship at the University of Maryland}
\date{\today}
\begin{document}
\maketitle

\begin{abstract}We construct infinite families of regular normal Cartan geometries with nonvanishing curvature and essential automorphisms on closed manifolds for many higher rank parabolic model geometries. To do this, we use particular elements of the kernel of the Kostant Laplacian to construct homogeneous Cartan geometries of the desired type, giving a global realization of an elegant local construction due to Kruglikov and The, and then modify these homogeneous geometries to make their base manifolds compact. As a demonstration, we apply the construction to quaternionic contact structrures of mixed signature, among other examples.\end{abstract}

\section{Introduction}
Given a conformal class $[\mathrm{g}]$ of Riemannian metrics on a smooth manifold $M$, the group of conformal transformations of $(M,[\mathrm{g}])$ is said to be \emph{inessential} if it acts isometrically for some $\mathrm{g}'\in[\mathrm{g}]$, and \emph{essential} otherwise. In other words, essential groups of conformal transformations are those which only preserve the conformal structure itself, not any of the underlying Riemannian metrics.

Conformal structures with essential groups of conformal transformations are remarkably atypical. Indeed, it is a result of Ferrand \cite{Ferrand} and Obata \cite{Obata}, verifying a conjecture of Andr\'{e} Lichnerowicz, that if $M$ is compact and the group of conformal transformations is essential, then $M$ is necessarily the sphere and $[\mathrm{g}]$ is the conformal class of the round metric. An analogous result for strictly pseudoconvex CR structures was later obtained by Webster \cite{Webster} and Schoen \cite{Schoen}: if a compact strictly pseudoconvex CR manifold $M$ has noncompact CR automorphism group, hence if it does not preserve an underlying pseudo-hermitian form for the CR structure, then $M$ is CR-equivalent to the sphere with the standard CR structure.

Both conformal structures of Riemannian signature and strictly pseudoconvex CR structures are examples of parabolic geometries, and in both cases, the underlying model geometries $(G,P)$ for these structures have $G$ with real rank 1. This led Frances to a wondrous generalization of all of the above results in \cite{RankOneFO}: if $(\mathscr{G},\omega)$ is a regular normal Cartan geometry of type $(G,P)$ over a compact smooth manifold $M$, with $G$ simple of real rank 1 and $P$ parabolic, then the automorphism group of $(\mathscr{G},\omega)$ acts nonproperly on $M$ if and only if $(\mathscr{G},\omega)$ is isomorphic to the Klein geometry of type $(G,P)$. By a later result of \cite{Alt}, an automorphism group that is essential in the sense of Definition \ref{ess} below necessarily acts nonproperly.

While there are immediate topological obstructions to extending this rigidity result to parabolic geometries of real rank greater than 1, it was reasonable to conjecture, as Frances did in \cite{Frances}, that parabolic geometries with essential\footnote{In \cite{Frances}, Frances actually uses a slightly weaker definition of essential for general parabolic geometries that does not coincide with the usual notion when applied to, for example, strictly pseudoconvex CR structures. We will be using the definition from \cite{Alt} instead, but our counterexamples will be valid for both definitions.} automorphism groups would necessarily be flat, meaning that they would still be \textit{locally} isomorphic to their Klein geometries.

However, Frances later showed in \cite{FrancesCounterexample} that even this weakened conjecture does not hold in higher rank cases. Specifically, he gave a construction of an infinite family of conformal structures of signature $(p,q)$, with $\min(p,q)>1$, on $S^1\times S^{p+q-1}$ with essential automorphism group but nonvanishing curvature. Soon afterward, Case, Curry, and Matveev gave an analogous construction in \cite{MixedCRstructures} for CR structures of signature $(p,q)$, with $\min(p,q)>1$. In both cases, the corresponding parabolic model geometries had real rank $\min(p,q)+1>2$. The results of this current paper will generalize both of these constructions to other parabolic geometries of real rank greater than 2.

To do this for Cartan geometries of type $(G,P)$, we first construct non-flat homogeneous Cartan geometries of type $(G,P)$. In fact, we choose the curvature that we want and then construct a homogeneous geometry with that curvature, giving a global realization of an elegant local construction from \cite{GapPhenomenon}. Originally, the author constructed these homogeneous geometries by a completely different, computationally intensive method, where we would start with a ``seed" of curvature, this seed would produce (restricted) roots for the Lie algebra $\mathfrak{g}$, and we would then ``grow" a deformation of the flat homogeneous geometry on $G_-G_0\times_{G_0}P$ from these roots. As such, we chose to call these homogeneous Cartan geometries ``curvature trees", and we detail their construction in Section \ref{forest}.

Under relatively mild conditions, we then construct non-flat regular normal Cartan geometries with essential automorphisms over a closed base manifold using these curvature trees in Section \ref{spotlight}. Several examples detailing how the construction applies in particular cases are given in Section \ref{examples}.

We should note that the requirement that the real rank be greater than 2 is necessary for us to construct compact examples. As such, our construction does not give compact examples in the conformal Lorentzian case, for which the question of whether essentiality implies flatness is still, at the time of this writing, an active area of research. Indeed, a recent result from \cite{FrancesMelnick} shows that, for real-analytic conformal Lorentzian structures, essentiality does imply flatness in dimension 3.

\section*{Acknowledgements}The author is especially grateful to Karin Melnick for several helpful discussions, as well as for catching a mistake in an earlier version of this paper.

\section{Preliminaries}
Throughout this paper, we will assume some familiarity with parabolic geometries and Cartan geometries in general. The author recommends \cite{CapSlovak} for a thorough introduction to the topic of parabolic geometries. For clarity, we will briefly review some of the relevant definitions and specify some notation.

\subsection{Parabolic model geometries}
\begin{definition}A \emph{model geometry} is a pair $(G,H)$, where $G$ is a Lie group and $H$ is a closed subgroup such that $G/H$ is connected. A model geometry $(G,P)$ is said to be \emph{parabolic} when $G$ is a semisimple Lie group and $P$ is a parabolic subgroup.\end{definition}

For a given parabolic model geometry $(G,P)$, we get an $\Ad_P$-invariant filtration \[\mathfrak{g}=\mathfrak{g}^{-k}\supset\mathfrak{g}^{-k+1}\supset\cdots\supset\mathfrak{g}^k\supset\{0\}\] of $\mathfrak{g}$ for some $k$, with $\mathfrak{g}^0=\mathfrak{p}$ and $\mathfrak{g}^1=\mathfrak{p}_+$ the nilradical of $\mathfrak{p}$. For this canonical filtration, we can choose an associated $|k|$-grading $\mathfrak{g}_{-k}\oplus\cdots\oplus\mathfrak{g}_k$ of $\mathfrak{g}$, with $\mathfrak{g}^i=\bigoplus_{\ell\geq i}\mathfrak{g}_\ell$.

The subgroup \[G_0:=\{R\in P:\Ad_R(\mathfrak{g}_i)\subseteq\mathfrak{g}_i\text{ for each}-k\leq i\leq k\}\] with Lie algebra $\mathfrak{g}_0$ is called the \emph{Levi subgroup} of $P$. This closed subgroup of $P$ is reductive, so that its Lie algebra splits as $\mathfrak{z}(\mathfrak{g}_0)\oplus\mathfrak{g}_0^\text{ss}$ with $\mathfrak{g}_0^\text{ss}$ semisimple. In the center of this Lie algebra $\mathfrak{g}_0$, there is a unique element $E_\text{gr}\in\mathfrak{z}(\mathfrak{g}_0)$, called the \emph{grading element}, such that $[E_\text{gr},X]=iX$ whenever $X\in\mathfrak{g}_i$. We also get a subalgebra $\mathfrak{g}_-$ generated by all of the negative grading components, dual to the nilradical $\mathfrak{p}_+$ of $\mathfrak{p}$, and a corresponding closed subgroup $G_-$ of $G$.

Typically, we will want to work with restricted roots of $\mathfrak{g}$, so we use the following definition to make sure the restricted roots we use are compatible with our choice of parabolic subgroup.

\begin{definition}For a given parabolic model geometry $(G,P)$, we say that $(\theta,\mathfrak{c},\mathfrak{a},\Delta,\Delta^+)$ is a \emph{compatible restricted root system} if $\theta$ is a Cartan involution of $\mathfrak{g}$, $\mathfrak{c}$ is a maximally noncompact $\theta$-stable Cartan subalgebra containing the grading element $E_\text{gr}\in\mathfrak{g}_0$, $\mathfrak{a}$ is the maximal subalgebra of $\mathfrak{c}$ such that $\theta|_\mathfrak{a}=-\mathrm{id}_\mathfrak{a}$, $\Delta\subset\mathfrak{a}^\vee$ is the set of restricted roots for $\mathfrak{g}$ with respect to $\mathfrak{a}$, and $\Delta^+\subset\Delta$ is a choice of positive roots such that $\Delta^+(\mathfrak{p}_+):=\{\alpha\in\Delta:\alpha(E_\text{gr})>0\}\subseteq\Delta^+$.\end{definition}

We let $\kgf$ denote the Killing form on $\mathfrak{g}$, and for $X\in\mathfrak{g}$, we denote by $X_\kgf\in\mathfrak{g}^\vee$ the element of the dual of $\mathfrak{g}$ given by $X_\kgf(Y):=\kgf(X,Y)$. Because $G$ is semisimple for parabolic model geometries $(G,P)$, $\kgf$ is nondegenerate, so the map $X\mapsto X_\kgf$ is a linear isomorphism from $\mathfrak{g}$ to $\mathfrak{g}^\vee$. For $\psi\in\mathfrak{g}^\vee$, we denote by $\psi^\kgf\in\mathfrak{g}$ the image of $\psi$ under the inverse of this isomorphism, so that $(\psi^\kgf)_\kgf=\psi$.

For elements $\nu\in\mathfrak{a}^\vee$, we will identify $\nu$ with the linear functional $\tilde{\nu}\in\mathfrak{g}^\vee$ such that $\tilde{\nu}|_\mathfrak{a}=\nu$ and $\tilde{\nu}(X)=0$ for every $X$ such that $\kgf(X,Y)=0$ for all $Y\in\mathfrak{a}$, and we will then write $\nu^\kgf\in\mathfrak{a}<\mathfrak{g}$ for $\tilde{\nu}^\kgf$. Using this, we get an inner product on $\mathfrak{a}^\vee$, which we also denote by $\kgf$, given by $\kgf(\nu_1,\nu_2):=\kgf(\nu_1^\kgf,\nu_2^\kgf)$.

For $(\theta,\mathfrak{c},\mathfrak{a},\Delta,\Delta^+)$ a compatible restricted root system, we also get a symmetric negative-definite form $\kgf_\theta$ given by $\kgf_\theta(X,Y):=\kgf(X,\theta(Y))$.

\subsection{General Cartan geometries}
The bulk of the geometric machinery of this paper is handled through Cartan geometries.

\begin{definition}Given a model geometry $(G,H)$ and a smooth manifold $M$, a \emph{Cartan geometry of type $(G,H)$ over $M$} is a pair $(\mathscr{G},\omega)$, where $\mathscr{G}$ is a principal $H$-bundle over $M$ and $\omega$ is a $\mathfrak{g}$-valued one-form satisfying the following:
\begin{itemize}
\item For every $\mathscr{g}\in\mathscr{G}$, $\omega_\mathscr{g}:T_\mathscr{g}\mathscr{G}\to\mathfrak{g}$ is a linear isomorphism;
\item For every $h\in H$, $\Rt{h}^*\omega=\Ad_{h^{-1}}\omega$\,;
\item For every $Y\in\mathfrak{h}$ and $\mathscr{g}\in\mathscr{G}$, the flow of the vector field $\omega^{-1}(Y)$ is given by $\exp(t\omega^{-1}(Y))\mathscr{g}=\mathscr{g}\exp(tY)$.\end{itemize}\end{definition}

The Cartan connection $\omega$ of a Cartan geometry $(\mathscr{G},\omega)$ of type $(G,H)$ behaves similarly to the Maurer-Cartan form $\MC{G}$ on $G$, which encodes the diffeo-geometric structure of the model homogeneous geometry on $G/H$ in the sense of Klein's Erlangen program. Indeed, the geometric structure of the model geometry can be encoded as a particular type of Cartan geometry called the \emph{Klein geometry}.

\begin{definition}The \emph{Klein geometry} of type $(G,H)$ is the Cartan geometry $(G,\MC{G})$ of type $(G,H)$ over $G/H$, where $\MC{G}$ is the Maurer-Cartan form on $G$.\end{definition}

The \emph{curvature} of a Cartan geometry then tells us how it locally differs from the model geometry.

\begin{definition}For a Cartan geometry $(\mathscr{G},\omega)$ of type $(G,H)$, its \emph{curvature} $\Omega$ is given by $\Omega=\mathrm{d}\omega+\frac{1}{2}[\omega,\omega]$. For $X,Y\in\mathfrak{g}$, the curvature applied to $\omega^{-1}(X)\wedge\omega^{-1}(Y)$ is given by \begin{align*}\Omega^\omega(X\wedge Y) & :=\Omega(\omega^{-1}(X)\wedge\omega^{-1}(Y)) \\ & =\mathrm{d}\omega(\omega^{-1}(X)\wedge\omega^{-1}(Y))+[X,Y] \\ & =\omega^{-1}(X)\omega(\omega^{-1}(Y))-\omega^{-1}(Y)\omega(\omega^{-1}(X)) \\ & \hspace{5em} -\omega([\omega^{-1}(X),\omega^{-1}(Y)])+[X,Y] \\ & =[X,Y]-\omega([\omega^{-1}(X),\omega^{-1}(Y)]).\end{align*} We say that a Cartan geometry is \emph{flat} when its curvature vanishes identically.\end{definition}

We also get a notion of morphisms and automorphisms.

\begin{definition}Given two Cartan geometries $(\mathscr{G}_1,\omega_1)$ and $(\mathscr{G}_2,\omega_2)$ of type $(G,H)$, a \emph{geometric map} is an $H$-equivariant map $\varphi:\mathscr{G}_1\to\mathscr{G}_2$ such that $\varphi^*\omega_2=\omega_1$. A geometric map that is also a diffeomorphism is called a \emph{(geometric) isomorphism}, and a geometric isomorphism from a Cartan geometry to itself is called a \emph{(geometric) automorphism}.\end{definition}

The set of all automorphisms of a Cartan geometry $(\mathscr{G},\omega)$ naturally inherits the structure of a Lie group, which we denote by $\Aut(\mathscr{G},\omega)$.

To compare Cartan geometries modeled on different homogeneous geometries, we use \emph{extension functors}.

\begin{definition}Given two model geometries $(Q,K)$ and $(G,H)$, suppose we have a pair $(i,\psi)$, where $i:K\to H$ is a homomorphism of Lie groups and $\psi:\mathfrak{q}\to\mathfrak{g}$ is a linear map, such that the following conditions are satisfied.
\begin{itemize}
\item For all $k\in K$, $\psi\circ\Ad_k=\Ad_{i(k)}\circ\psi$;
\item $\psi|_\mathfrak{k}=i_*$;
\item The induced map $\bar{\psi}:\mathfrak{q}/\mathfrak{k}\to\mathfrak{g}/\mathfrak{h}$ is a linear isomorphism.
\end{itemize}
Then, the \emph{extension functor} induced by $(i,\psi)$ takes Cartan geometries $(\mathscr{Q},\upsilon)$ of type $(Q,K)$ and outputs Cartan geometries of type $(G,H)$ given by $(\mathscr{Q}\times_i H,\omega)$, where for each $(\mathscr{q},h)\in\mathscr{Q}\times_i H$, $\omega_{(\mathscr{q},h)}:=\Ad_{h^{-1}}\circ\psi(\upsilon_\mathscr{q})+\MC{H}$.\end{definition}

Given an extension functor from type $(Q,K)$ to type $(G,H)$ with $i:K\to H$ injective, we get a corresponding \emph{homogeneous Cartan geometry} of type $(G,H)$ given by the output of the extension functor applied to the Klein geometry of type $(Q,K)$. Homogeneous Cartan geometries are precisely the Cartan geometries $(\mathscr{G},\omega)$ for which the induced action of $\Aut(\mathscr{G},\omega)$ on the base manifold is transitive.

\subsection{Curvature restrictions for parabolic geometries}
For Cartan geometries of type $(G,P)$, where $(G,P)$ is a parabolic model geometry, it is often convenient to place certain restrictions on their curvatures. The first of these, \emph{regularity}, is fairly easy to understand geometrically.

\begin{definition}A Cartan geometry $(\mathscr{G},\omega)$ of type $(G,P)$ is \emph{regular} if and only if the curvature $\Omega$ satisfies $\Omega^\omega(X\wedge Y)\in\mathfrak{g}^{i+j+1}$ whenever $X\in\mathfrak{g}^i$ and $Y\in\mathfrak{g}^j$.\end{definition}

Regularity tells us that, while $(\mathscr{G},\omega)$ is allowed to have torsion, that torsion cannot ``escape" the filtration on $\mathfrak{g}$ induced by $\mathfrak{p}$. In particular, if $q_{{}_{P*}}(X)$ and $q_{{}_{P*}}(Y)$ are contained in the distribution on the base manifold $M$ given by taking the image of $\omega^{-1}(\mathfrak{g}^{-1})$ under the quotient map $q_{{}_P}:\mathscr{G}\to M$, then the curvature applied to $X\wedge Y$ must remain in $\mathfrak{g}^{-1}$.


The second condition, \emph{normality}, is slightly harder to understand geometrically, and the author still considers it somewhat mysterious from that perspective. The \emph{Kostant codifferential} $\partial^*:\Lambda^k(\mathfrak{g}/\mathfrak{p})^\vee\otimes\mathfrak{g}\to\Lambda^{k-1}(\mathfrak{g}/\mathfrak{p})^\vee\otimes\mathfrak{g}$ is defined, for $Y_1,\dots,Y_k\in\mathfrak{p}_+$ and $X\in\mathfrak{g}$, by \begin{align*}\partial^*((Y_1)_\kgf\wedge\cdots\wedge(Y_k)_\kgf\otimes X):= & \sum_{i=1}^k(-1)^i\left(\wedge_{\ell\neq i}(Y_\ell)_\kgf\right)\otimes[Y_i,X] \\ & +\sum_{j>i}(-1)^{i+j}[Y_i,Y_j]_\kgf\wedge\left(\wedge_{\ell\not\in\{i,j\}}(Y_\ell)_\kgf\right)\otimes X,\end{align*} and a Cartan geometry is normal if and only if the Kostant codifferential vanishes on the curvature form.

\begin{definition}A Cartan geometry $(\mathscr{G},\omega)$ of type $(G,P)$ is \emph{normal} if and only if its curvature $\Omega$ satisfies $\partial^*(\Omega^\omega)=0$.\end{definition}

Under the identifications $(\mathfrak{g}/\mathfrak{p})^\vee\approx\mathfrak{g}_-^\vee\approx\mathfrak{p}_+$, $\partial^*$ is precisely the boundary operator for computing the homology groups $H_k(\mathfrak{p}_+;\mathfrak{g})$. Thus, if $(\mathscr{G},\omega)$ is normal, then the curvature determines a section of $\mathscr{G}\times_P H_2(\mathfrak{p}_+;\mathfrak{g})$ by taking the image of $\Omega^\omega$ under the map $\mathscr{G}\times_P\ker(\partial^*)\to\mathscr{G}\times_P H_2(\mathfrak{p}_+;\mathfrak{g})$. This section is called the \emph{harmonic curvature} of $(\mathscr{G},\omega)$.

We may also define the usual differential $\partial:\Lambda^k\mathfrak{g}_-^\vee\otimes\mathfrak{g}\to\Lambda^{k+1}\mathfrak{g}_-^\vee\otimes\mathfrak{g}$ by \begin{align*}\partial\alpha(X_0\wedge\cdots\wedge X_k):= & \sum_{i=0}^k(-1)^i[X_i,\alpha(\wedge_{\ell\neq i}X_\ell)] \\ & +\sum_{j>i}(-1)^{i+j}\alpha([X_i,X_j]\wedge\left(\wedge_{\ell\not\in\{i,j\}}X_\ell\right)),\end{align*} where $X_0,\dots,X_k\in\mathfrak{g}_-$ and $\alpha\in\Lambda^k\mathfrak{g}_-^\vee\otimes\mathfrak{g}$. It turns out (see, for example, Theorem 3.3.1 of \cite{CapSlovak}) that, using the \emph{Kostant Laplacian} \[\square:=\partial^*\circ\partial+\partial\circ\partial^*,\] we get a Hodge decomposition of $G_0$-representations \[\Lambda^k(\mathfrak{g}/\mathfrak{p})^\vee\otimes\mathfrak{g}=\mathrm{im}(\partial)\oplus\ker(\square)\oplus\mathrm{im}(\partial^*),\] with $\ker(\partial)=\mathrm{im}(\partial)\oplus\ker(\square)$ and $\ker(\partial^*)=\mathrm{im}(\partial^*)\oplus\ker(\square)$. In particular, as $G_0$-representations, \[H^k(\mathfrak{g}_-;\mathfrak{g})\approx\ker(\square)\approx H_k(\mathfrak{p}_+;\mathfrak{g}),\] and the harmonic curvature corresponds to the Kostant-harmonic component of $\Omega^\omega$.

For $(\mathscr{G},\omega)$ regular and normal, the Kostant-harmonic component of $\Omega^\omega$ must be in the $G_0$-subrepresentation $\ker(\square)_+$ consisting of elements of $\ker(\square)$ that map $X\wedge Y$ into $\mathfrak{g}^{i+j+1}$ whenever $X\in\mathfrak{g}^i$ and $Y\in\mathfrak{g}^j$. We can equivalently define $\ker(\square)_+$ to be the sum of the positive eigenspaces of $E_\text{gr}$ acting on $\Lambda^2(\mathfrak{g}/\mathfrak{p})^\vee\otimes\mathfrak{g}$.

As a consequence of Kostant's version of the Bott-Borel-Weil theorem (see, for example, Theorem 3.3.5 in \cite{CapSlovak}), if $G$ is split-real and simple, then the $G_0$-irreducible components of $\ker(\square)$ will have lowest weight vectors of the form \[(\eta_{\alpha_i})_\kgf\wedge(\eta_{s_{\alpha_i}(\alpha_j)})_\kgf\otimes\eta_{-s_{\alpha_i}s_{\alpha_j}(\mu)},\] where $\alpha_i$ and $\alpha_j$ are simple roots, $\mu$ is the highest root of the adjoint representation, $s_\alpha:\nu\mapsto\nu-2\frac{\kgf(\alpha,\nu)}{\kgf(\alpha,\alpha)}\alpha$, and $\eta_\nu\in\mathfrak{g}_\nu$ for $\nu\in\Delta$. Moreover, the lowest weight vectors of this form that are contained in $\ker(\square)_+$ were, under the appropriate identifications, listed in \cite{Yamaguchi} (with some small corrections in \cite{Yamaguchi2}), so if we want to find a lowest weight vector of this form, then we can just look one up on Yamaguchi's list.

More generally, for arbitrary parabolic $(G,P)$, we will get lowest weight vectors of $\ker(\square)$ of the form \[\sum_k(\eta_{\beta,k})_\kgf\wedge(\eta_{\gamma,k})_\kgf\otimes\eta_{\zeta,k}\] for some $\beta,\gamma\in\Delta^+(\mathfrak{p}_+)$ and $\zeta\in\Delta$. For the purposes of this paper, we will be specifically interested in such elements contained in $\ker(\square)_+$ with $\zeta(E_\text{gr})\leq 0$, which we can describe in the following lemma.

\begin{lemma}\label{assume} Suppose $\Omega=\sum_k(\eta_{\beta,k})_\kgf\wedge(\eta_{\gamma,k})_\kgf\otimes\eta_{\zeta,k}\in\ker(\square)_+$ is a lowest weight vector, where $\beta,\gamma\in\Delta^+(\mathfrak{p}_+)$ and $\zeta\in\Delta\setminus\Delta^+(\mathfrak{p}_+)$ such that $\beta-\gamma\not\in\Delta^+$, with $\eta_{\beta,k}\in\mathfrak{g}_\beta$, $\eta_{\gamma,k}\in\mathfrak{g}_\gamma$, and $\eta_{\zeta,k}\in\mathfrak{g}_\zeta$ for each $k$. Then, $\zeta\in-\Delta^+$ and $\beta$ is a simple restricted root.\end{lemma}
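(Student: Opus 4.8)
The plan is to read off the weight of $\Omega$ and then exploit the two conditions it satisfies---that it is Kostant-harmonic (in particular $\partial^*\Omega=0$) and that it is a lowest weight vector for $\mathfrak{g}_0$---together with Kostant's description of $\ker(\square)$. First I would record weights. Since $\kgf$ pairs $\mathfrak{g}_\nu$ with $\mathfrak{g}_{-\nu}$, each $(\eta_{\nu})_\kgf\in\mathfrak{g}_-^\vee$ is an $\mathfrak{a}$-weight vector of weight $\nu$, so $\Omega$ has weight $\lambda:=\beta+\gamma+\zeta$ and $E_\text{gr}$-eigenvalue $\beta(E_\text{gr})+\gamma(E_\text{gr})+\zeta(E_\text{gr})$, with $\beta(E_\text{gr}),\gamma(E_\text{gr})\geq 1$ and $\zeta(E_\text{gr})\leq 0$. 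Being a lowest weight vector means $Z\cdot\Omega=0$ for every $Z$ in a negative root space $\mathfrak{g}_{-\delta}$ of $\mathfrak{g}_0^\text{ss}$.

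For the claim $\zeta\in-\Delta^+$: since $\zeta\in\Delta\setminus\Delta^+(\mathfrak{p}_+)$ forces $\zeta(E_\text{gr})\leq 0$, the only way to avoid $\zeta\in-\Delta^+$ is for $\zeta$ to be a positive root of $\mathfrak{g}_0^\text{ss}$, and I would rule this out directly. If $\zeta$ were such a root, then $\mathfrak{g}_{-\zeta}$ would be a negative root space of $\mathfrak{g}_0^\text{ss}$, so $X\cdot\Omega=0$ for all $X\in\mathfrak{g}_{-\zeta}$. I would then isolate the part of $X\cdot\Omega$ whose $\mathfrak{g}$-value factor has weight $0$: when $X$ hits a $\mathfrak{p}_+$-factor the value factor keeps weight $\zeta\neq 0$, so only the terms where $X$ hits $\eta_{\zeta,k}$ survive, yielding $\sum_k(\eta_{\beta,k})_\kgf\wedge(\eta_{\gamma,k})_\kgf\otimes[X,\eta_{\zeta,k}]=0$. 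Now $[X,\eta_{\zeta,k}]=-\kgf(\eta_{\zeta,k},X)\,\zeta^\kgf$ is a multiple of the nonzero element $\zeta^\kgf\in\mathfrak{a}$, so $\sum_k\kgf(\eta_{\zeta,k},X)\,(\eta_{\beta,k})_\kgf\wedge(\eta_{\gamma,k})_\kgf=0$ for every $X\in\mathfrak{g}_{-\zeta}$. Since $\kgf$ restricts to a nondegenerate pairing of $\mathfrak{g}_\zeta$ with $\mathfrak{g}_{-\zeta}$, the functionals $X\mapsto\kgf(\cdot,X)$ exhaust $\mathfrak{g}_\zeta^\vee$, forcing $\sum_k\eta_{\zeta,k}\otimes(\eta_{\beta,k})_\kgf\wedge(\eta_{\gamma,k})_\kgf=0$, i.e. $\Omega=0$---a contradiction. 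Hence $\zeta\in-\Delta^+$.

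For the claim that $\beta$ is simple, I would invoke the quoted form of Kostant's theorem: a lowest weight vector of $\ker(\square)$ in homological degree $2$ is governed by a length-$2$ element $w$ of the restricted Weyl group, and the two roots $\beta,\gamma$ in its $\mathfrak{p}_+$-factors are precisely the two positive restricted roots that $w$ sends to negative roots. Every such $w$ equals $s_as_b$ for distinct simple restricted roots $a,b$, and its inverted pair is $\{b,\,s_b(a)\}$, where $b$ is simple and $s_b(a)=a-\langle a,b^\vee\rangle b=a+mb$ (with $m=-\langle a,b^\vee\rangle\geq 0$) sits at the top of the $b$-string through $a$. Thus $\{\beta,\gamma\}=\{b,\,a+mb\}$ with $b$ simple (and in particular $\beta\neq\gamma$). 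If $m=0$ both are simple; if $m\geq 1$, then $(a+mb)-b=a+(m-1)b$ is again in the $b$-string, hence a positive root, whereas $b-(a+mb)=-a-(m-1)b\notin\Delta^+$. As $\beta-\gamma\notin\Delta^+$ by hypothesis, $\beta$ cannot be the non-simple root $a+mb$, so $\beta=b$ is simple.

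The hard part will be justifying the structural input behind the second claim outside the split-real case. Kostant's theorem is cleanest for complex or split-real $\mathfrak{g}$, whereas here $\Omega$ is genuinely a sum over $k$ encoding restricted-root multiplicities, and the restricted system may even be non-reduced; I would need to confirm that the identification of $\{\beta,\gamma\}$ with the inverted pair of a length-$2$ restricted-Weyl element, together with the $b$-string positivity facts used above, persists after passing to restricted roots. By contrast, the lowering argument for the first claim is robust: it uses only the $\mathfrak{g}_0$-module structure and nondegeneracy of $\kgf$ on opposite root spaces, and the clean extraction of the value-weight-$0$ component needs no genericity since $\zeta\neq 0$. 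I expect the restricted-root bookkeeping behind the simplicity of $\beta$---rather than any analytic or computational difficulty---to be the real obstacle.
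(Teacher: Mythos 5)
Your argument for the first claim ($\zeta\in-\Delta^+$) is essentially the paper's: both rule out $\zeta\in\Delta^+(\mathfrak{g}_0)$ by applying a lowering operator from $\mathfrak{g}_{-\zeta}$ and isolating the component whose value slot has $\mathfrak{a}$-weight $0$. One inaccuracy to repair: for restricted root spaces of multiplicity greater than one, $[X,\eta_{\zeta,k}]$ for $X\in\mathfrak{g}_{-\zeta}$ is \emph{not} simply $-\kgf(\eta_{\zeta,k},X)\,\zeta^\kgf$; it lies in $\mathfrak{a}\oplus\mathfrak{m}$ with $\mathfrak{m}=Z_{\mathfrak{k}}(\mathfrak{a})$, and only its $\mathfrak{a}$-component is that multiple of $\zeta^\kgf$. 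Pairing the value slot against a fixed $H\in\mathfrak{a}$ with $\zeta(H)\neq 0$ (or projecting onto $\mathfrak{a}$) recovers exactly the coefficients $\kgf(\eta_{\zeta,k},X)$ you want, so the argument survives, but as written the displayed identity is false outside the split case. (The paper instead normalizes so that the $\eta_{\beta,k}\wedge\eta_{\gamma,k}$ are linearly independent and concludes $[\theta(\eta_{\zeta,k}),\eta_{\zeta,k}]=0$, hence $\eta_{\zeta,k}=0$ by definiteness of $\kgf_\theta$; your version avoids that normalization, which is a mild simplification.)

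The second claim is where there is a genuine gap, and you have correctly located it yourself. Your derivation of the simplicity of $\beta$ rests entirely on the Kostant/Bott--Borel--Weil description of lowest weight vectors of $H^2(\mathfrak{g}_-;\mathfrak{g})$ as indexed by length-$2$ Weyl group elements $w=s_as_b$ with inversion set $\{b,s_b(a)\}$. The paper quotes that description only for $G$ split-real and simple; the entire purpose of Lemma \ref{assume} is to extract the structural conclusions ($\zeta\in-\Delta^+$, $\beta$ simple) \emph{without} that classification, precisely because the construction must apply to non-split real forms (e.g.\ the quaternionic contact example in Section \ref{examples}), where the restricted root system may be non-reduced, the root spaces have multiplicities, the lowest weight vector is a genuine sum over $k$, and no analogue of the length-$2$ parametrization is invoked or established. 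So the "confirmation" you defer is not bookkeeping --- it is the content of the claim, and your argument is circular in exactly the cases the lemma exists to cover. The paper's actual route is elementary and direct: since $\partial\Omega=0$, the set of $X\in\mathfrak{g}_-$ with $\iota_X\Omega=0$ is closed under brackets, so the generation property $\mathfrak{g}_{-i-1}=[\mathfrak{g}_{-1},\mathfrak{g}_{-i}]$ forces $\beta(E_\text{gr})=1$ on pain of $\Omega=0$; then, supposing $\beta=\nu_1+\nu_2$ with $\nu_1,\nu_2\in\Delta^+$, necessarily $\nu_2(E_\text{gr})=0$, and applying the lowest-weight condition with $\eta_{-\nu_2}\in\mathfrak{g}_{-\nu_2}$ shows each $[\eta_{-\nu_2},\eta_{\beta,k}]$ either vanishes identically (contradicting $\beta-\nu_2\in\Delta$) or lands in $\mathfrak{g}_\gamma$ (contradicting the hypothesis $\beta-\gamma\not\in\Delta^+$). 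You should replace your appeal to the Weyl-group classification with an argument of this kind, using only $\partial\Omega=0$ and the lowest-weight property.
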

\begin{proof}Without loss of generality, we may assume that the $\eta_{\beta,k}\wedge\eta_{\gamma,k}$, indexed over $k$, are linearly independent, since otherwise we can just rewrite $\Omega$ by combining linearly dependent terms until the $\eta_{\beta,k}\wedge\eta_{\gamma,k}$ are linearly independent. Similarly, we may assume that the union of the set of all the $\eta_{\beta,k}\otimes\eta_{\zeta,k}$ with the set of all the $\eta_{\gamma,k}\otimes\eta_{\zeta,k}$ is linearly independent by combining linearly dependent terms if necessary.

If we had $\zeta\in\Delta^+(\mathfrak{g}_0)$, then because $\Omega$ is a lowest weight vector, we would have \begin{align*}0 & =\theta(\eta_{\zeta,k})\cdot\Omega \\ & =\sum_k\Big([\theta(\eta_{\zeta,k}),\eta_{\beta,k}]_\kgf\wedge(\eta_{\gamma,k})_\kgf\otimes\eta_{\zeta,k}+(\eta_{\beta,k})_\kgf\wedge[\theta(\eta_{\zeta,k}),\eta_{\gamma,k}]_\kgf\otimes\eta_{\zeta,k} \\ & \hspace{5em} +(\eta_{\beta,k})_\kgf\wedge(\eta_{\gamma,k})_\kgf\otimes[\theta(\eta_{\zeta,k}),\eta_{\zeta,k}]\Big).\end{align*} Because the $\eta_{\beta,k}\wedge\eta_{\gamma,k}$ are linearly independent, this would require that $[\theta(\eta_{\zeta,k}),\eta_{\zeta,k}]=0$, which cannot happen unless $\eta_{\zeta,k}=0$. In particular, either $\zeta\in-\Delta^+(\mathfrak{p}_+)$ or $\zeta\in-\Delta^+(\mathfrak{g}_0)$, hence $\zeta\in-\Delta^+$.

Because $\Omega\in\ker(\square)_+$, we know that $\partial\Omega=0$. Thus, for $X,Y\in\mathfrak{g}_-$ such that $\iota_X\Omega=\iota_Y\Omega=0$, $\partial\Omega(X\wedge Y\wedge Z)=\Omega([X,Y]\wedge Z)=0$, so $\iota_{[X,Y]}\Omega=0$. Since $\mathfrak{g}_{-i-1}=[\mathfrak{g}_{-1},\mathfrak{g}_{-i}]$, it follows that $\Omega=0$ unless $\beta(E_\text{gr})=1$. If $\beta=\nu_1+\nu_2$ for $\nu_1,\nu_2\in\Delta^+$ with $\nu_1(E_\text{gr})\geq\nu_2(E_\text{gr})$, then $\nu_1(E_\text{gr})=1$ and $\nu_2(E_\text{gr})=0$. In this case, we would have $\beta-\nu_2=\nu_1$, but for every $\eta_{-\nu_2}\in\mathfrak{g}_{-\nu_2}$, \begin{align*}0 & =\eta_{-\nu_2}\cdot\Omega \\ & =\sum_k\Big([\eta_{-\nu_2},\eta_{\beta,k}]_\kgf\wedge(\eta_{\gamma,k})_\kgf\otimes\eta_{\zeta,k}+(\eta_{\beta,k})_\kgf\wedge[\eta_{-\nu_2},\eta_{\gamma,k}]_\kgf\otimes\eta_{\zeta,k} \\ & \hspace{5em} +(\eta_{\beta,k})_\kgf\wedge(\eta_{\gamma,k})_\kgf\otimes[\eta_{-\nu_2},\eta_{\zeta,k}]\Big),\end{align*} so the linear independence of the $\eta_{\beta,k}\otimes\eta_{\zeta,k}$ and $\eta_{\gamma,k}\otimes\eta_{\zeta,k}$ then tells us that, 
for each $k$, either $[\eta_{-\nu_2},\eta_{\beta,k}]=0$ for every $\eta_{-\nu_2}\in\mathfrak{g}_{-\nu_2}$, which contradicts $\beta-\nu_2=\nu_1\in\Delta$, or $[\eta_{-\nu_2},\eta_{\beta,k}]\in\mathfrak{g}_\gamma$, which contradicts $\beta-\gamma\not\in\Delta^+$.\qed\end{proof}

\subsection{Essential automorphisms}
We now move to the definitions of concepts specific to the idea of essential automorphisms of parabolic geometries.

For a parabolic model geometry $(G,P)$ with compatible restricted root system $(\theta,\mathfrak{c},\mathfrak{a},\Delta,\Delta^+)$, consider a homomorphism $\lambda:G_0\to\mathbb{R}_+$. Thinking of the induced Lie algebra homomorphism $\lambda_*:\mathfrak{g}_0\to\mathbb{R}$ as an element of the dual space of $\mathfrak{g}$ that vanishes on elements that are $\kgf$-orthogonal to $\mathfrak{g}_0$, we may consider the element $\lambda_*^\kgf\in\mathfrak{z}(\mathfrak{g}_0)$ uniquely determined by $\kgf(\lambda_*^\kgf,X)=\lambda_*(X)$ for $X\in\mathfrak{g}$. We use the following definitions, along the lines of \cite{CapSlovak2}.

\begin{definition}Given a homomorphism $\lambda:G_0\to\mathbb{R}_+$, we say that $\lambda_*^\kgf$ is a \emph{scaling element} if and only if $\ad_{\lambda_*^\kgf}$ restricts to multiplication by a nonzero real scalar on each $G_0$-irreducible component of $\mathfrak{p}_+$.

If $\lambda_*^\kgf$ is a scaling element and $(\mathscr{G},\omega)$ is a Cartan geometry of type $(G,P)$ over $M$, then we say that the principal $\mathbb{R}_+$-bundle \[\mathscr{G}/P_+\times_\lambda\mathbb{R}_+\cong\mathscr{G}/\ker(\lambda)P_+\] is the \emph{bundle of scales} determined by $\lambda$ for $(\mathscr{G},\omega)$.\end{definition}

For a Cartan geometry $(\mathscr{G},\omega)$ of type $(G,P)$, a \emph{Weyl structure} is just a choice of $G_0$-equivariant section $\sigma:\mathscr{G}/P_+\to\mathscr{G}$. By using $\sigma$ to pull back the Cartan connection on $\mathscr{G}$ to a $\mathfrak{g}$-valued one-form on $\mathscr{G}/P_+$ and then $\kgf_\theta$-orthogonally projecting from $\mathfrak{g}$ to $\mathfrak{g}_0$, we get a principal $G_0$-connection $\sigma^*\omega_0$ on $\mathscr{G}/P_+$, and this induces a principal $\mathbb{R}_+$-connection $\lambda_*(\sigma^*\omega_0)$ on the bundle of scales $\mathscr{G}/\ker(\lambda)P_+$ determined by $\lambda$.

We say that a Weyl structure $\sigma:\mathscr{G}/P_+\to\mathscr{G}$ for a Cartan geometry $(\mathscr{G},\omega)$ of type $(G,P)$ over $M$ is \emph{$\lambda$-exact} if and only if there exists a global section $f_\sigma:M\to\mathscr{G}/\ker(\lambda)P_+$ inducing the same principal $\mathbb{R}_+$-connection as $\sigma^*\omega_0$. By \cite{CapSlovak2}, there is a bijective correspondence between $\lambda$-exact Weyl structures $\sigma$ and global sections $f_\sigma$.

From $P$-equivariance, we get an induced action of $\Aut(\mathscr{G},\omega)$ on both the base manifold $M\cong\mathscr{G}/P$ and each bundle of scales $\mathscr{G}/\ker(\lambda)P_+$. In particular, we get an induced action of $\Aut(\mathscr{G},\omega)$ on the space of global sections of $\mathscr{G}/\ker(\lambda)P_+$ for each $\lambda$, which we can use to define essential automorphisms, along the lines of \cite{Alt}.

\begin{definition}\label{ess} Suppose $(\mathscr{G},\omega)$ is a Cartan geometry of type $(G,P)$ over $M$ and $\lambda:G_0\to\mathbb{R}_+$ is a homomorphism with $\lambda_*^\kgf$ a scaling element. An automorphism $\varphi\in\Aut(\mathscr{G},\omega)$ is \emph{$\lambda$-inessential} if and only if there exists a $\lambda$-exact Weyl structure $\sigma$ with corresponding global section $f_\sigma$ such that $\varphi\cdot f_\sigma=f_\sigma$, where $\varphi\cdot f_\sigma$ is the induced action of $\varphi$ on $f_\sigma$, and $\varphi$ is \emph{$\lambda$-essential} if and only if it is not $\lambda$-inessential. Moreover, we say that $\varphi\in\Aut(\mathscr{G},\omega)$ is \emph{essential} if and only if it is $\lambda$-essential for every $\lambda$.\end{definition}

For the purposes of this paper, we will barely need this definition. Instead, we will mainly need the following result, which is a direct consequence of Corollary 6.5 and Definition 7.11 in \cite{HolonomyPaper}, though \textit{au fond}, it is more or less already a corollary of Theorem 1.2 in \cite{Alt} or Proposition 7.12 in \cite{HolonomyPaper}.

\begin{proposition}\label{esscheck} Suppose $(\mathscr{G},\omega)$ is a Cartan geometry of type $(G,P)$ over $M$, $\lambda:G_0\to\mathbb{R}_+$ is a homomorphism such that $\lambda_*^\kgf$ is a scaling element, and $\varphi\in\Aut(\mathscr{G},\omega)$. If there exist $\mathscr{e}\in\mathscr{G}$ and $p\in P$ such that $\varphi(\mathscr{e})=\mathscr{e}p$, then $\varphi$ is $\lambda$-essential if $p$ is not conjugate to an element of $\ker(\lambda)$.\end{proposition}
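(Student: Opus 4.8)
The plan is to prove the contrapositive: assuming $\varphi$ is $\lambda$-inessential, I will produce $q_0\in P$ with $q_0^{-1}pq_0\in\ker(\lambda)$, so that $p$ is conjugate in $P$ to an element of $\ker(\lambda)$. The first, easy, observation is that $\varphi$ has a fixed point on the base: since $\varphi(\mathscr{e})=\mathscr{e}p$ with $p\in P$, the points $\mathscr{e}$ and $\varphi(\mathscr{e})$ lie in the same $P$-orbit, so $\varphi$ fixes the image $x\in M$ of $\mathscr{e}$. Everything afterward is read off over this fixed point.

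By Definition \ref{ess}, $\lambda$-inessentiality gives a $\lambda$-exact Weyl structure $\sigma$ whose global section $f_\sigma$ of the bundle of scales $\mathscr{G}/\ker(\lambda)P_+$ is fixed by $\varphi$, i.e. $\varphi\cdot f_\sigma=f_\sigma$. The central step is to upgrade this to the statement that $\varphi$ preserves the $G_0$-reduction $\mathscr{G}_0:=\sigma(\mathscr{G}/P_+)\subset\mathscr{G}$ determined by $\sigma$. For this I would invoke the bijective correspondence $\sigma\mapsto f_\sigma$ between $\lambda$-exact Weyl structures and global sections: since it is manufactured purely from $\omega$, the datum entering $\kgf_\theta$, and the fixed homomorphism $\lambda$, all of which an automorphism preserves, it is $\Aut(\mathscr{G},\omega)$-equivariant, so that $\varphi\cdot f_\sigma=f_{\varphi\cdot\sigma}$. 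As $\varphi\cdot\sigma$ is again a $\lambda$-exact Weyl structure, injectivity of the correspondence turns $f_{\varphi\cdot\sigma}=f_\sigma$ into $\varphi\cdot\sigma=\sigma$, which is exactly $\varphi(\mathscr{G}_0)=\mathscr{G}_0$.

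With $\varphi$ preserving $\mathscr{G}_0$, I would pick a representative $\mathscr{e}_0=\mathscr{e}q_0\in\mathscr{G}_0$ in the fibre $\mathscr{G}_x$ over $x$. Since $\varphi$ fixes $x$ and preserves $\mathscr{G}_0$, its image $\varphi(\mathscr{e}_0)=\mathscr{e}_0(q_0^{-1}pq_0)$ lies again in $\mathscr{G}_0\cap\mathscr{G}_x=\mathscr{e}_0G_0$, forcing $p':=q_0^{-1}pq_0\in G_0$; thus $p$ is already conjugate into $G_0$. It remains to see $\lambda(p')=1$. Extending $\lambda$ to $P$ by declaring it trivial on $P_+$ gives a homomorphism with kernel $\ker(\lambda)P_+$, and $[\mathscr{e}_0q]\mapsto\lambda(q)$ identifies the fibre of the bundle of scales over $x$ with $\mathbb{R}_+$. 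In this coordinate $\varphi([\mathscr{e}_0q])=[\mathscr{e}_0p'q]$ acts by multiplication by $\lambda(p')$. Because $\varphi$ fixes $x$ and $\varphi\cdot f_\sigma=f_\sigma$, the number $f_\sigma(x)\in\mathbb{R}_+$ is a fixed point of this action, so $\lambda(p')=1$, i.e. $p'\in\ker(\lambda)$. Hence $p$ is conjugate in $P$ to $p'\in\ker(\lambda)$, which is the contrapositive.

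The hard part is the middle step, turning ``$\varphi$ fixes the scale $f_\sigma$'' into ``$\varphi$ preserves the $G_0$-reduction $\sigma$''. Applying the fibrewise argument directly to an arbitrary $\ker(\lambda)P_+$-reduction underlying $f_\sigma$ only detects $p$ modulo $\ker(\lambda)P_+$ — it merely yields that $\lambda$ of the $G_0$-component of $p$ is $1$ — and this is strictly weaker than conjugacy into $\ker(\lambda)$: for a nontrivial unipotent $p\in P_+$ the $\lambda$-value is trivial, but its image in $G_0=P/P_+$ is the identity, so if it were conjugate to some $r\in\ker(\lambda)\subset G_0$ then $r$, and hence $p$, would be the identity, a contradiction. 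The finer $G_0$-level conjugacy is precisely what the Weyl structure, as opposed to the coarser scale, records, so the entire proof rests on the $\Aut(\mathscr{G},\omega)$-equivariance and injectivity of the Weyl-structure/section correspondence; that is where I would concentrate the work.
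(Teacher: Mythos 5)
Your argument is correct, but note that the paper does not actually prove this proposition: it is quoted as a direct consequence of Corollary 6.5 and Definition 7.11 of \cite{HolonomyPaper} (or of Theorem 1.2 in \cite{Alt}), so there is no internal proof to compare against, and what you have written is essentially a self-contained reconstruction of the mechanism behind Alt's theorem. Your diagnosis of where the content lies is exactly right: the passage from ``$\varphi$ fixes the scale $f_\sigma$'' to ``$\varphi$ fixes the exact Weyl structure $\sigma$, hence the $G_0$-reduction $\sigma(\mathscr{G}/P_+)$'' is the whole point, since the naive fibrewise computation on the bundle of scales only controls the image of $p$ under $\lambda$ extended trivially over $P_+$, which (as your unipotent example shows) is strictly weaker than conjugacy into $\ker(\lambda)$. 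One small point of precision: the injectivity you actually need is that of the map from Weyl structures to induced principal connections on the bundle of scales (the literal assignment $\sigma\mapsto f_\sigma$ is only canonical up to a constant in $\mathbb{R}_+$, since rescaled sections induce the same flat connection). This costs nothing: $\varphi\cdot f_\sigma=f_\sigma$ implies $\varphi$ preserves the flat connection determined by $f_\sigma$, which equals $\lambda_*(\sigma^*\omega_0)$ by exactness, and naturality of the construction (using $\varphi^*\omega=\omega$ and the fact that the projection to $\mathfrak{g}_0$ is a fixed linear map) gives $\lambda_*((\varphi\cdot\sigma)^*\omega_0)=\lambda_*(\sigma^*\omega_0)$, whence $\varphi\cdot\sigma=\sigma$ by the \v{C}ap--Slov\'{a}k bijection that the paper itself quotes from \cite{CapSlovak2}. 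The remaining steps --- $q_0^{-1}pq_0\in G_0$ because $\varphi$ preserves both the reduction and the fibre over the fixed point, and then $\lambda(q_0^{-1}pq_0)=1$ because multiplication by this scalar on the $\mathbb{R}_+$-fibre fixes $f_\sigma(x)$ --- are sound, and you even obtain conjugacy within $P$, which covers either reading of ``conjugate'' in the statement.
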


In particular, if $\varphi(\mathscr{e})=\mathscr{e}a$ for some $a\in G_0<P$, then $\varphi$ is $\lambda$-essential if $a\not\in\ker(\lambda)$, since $hah^{-1}\in\ker(\lambda)$ if and only if $a\in\ker(\lambda)$ for $h\in G_0$ because $\ker(\lambda)$ is a normal subgroup of $G_0$ and $pap^{-1}=(pap^{-1}a^{-1})a\in P_+a$ for $p\in P_+$.

\section{Curvature trees}\label{forest}
Consider a parabolic model geometry $(G,P)$ with a compatible restricted root system $(\theta,\mathfrak{c},\mathfrak{a},\Delta,\Delta^+)$. Given an element $\Omega\in\Lambda^2(\mathfrak{g}/\mathfrak{p})^\vee\otimes\mathfrak{g}$, let \[\mathrm{Stab}_{G_0}(\Omega):=\{h\in G_0:h\cdot\Omega=\Omega\},\] where $p\cdot X_\kgf\wedge Y_\kgf\otimes Z=(\Ad_pX)_\kgf\wedge(\Ad_pY)_\kgf\otimes\Ad_pZ$ for $X,Y\in\mathfrak{p}_+$, $Z\in\mathfrak{g}$, and $p\in P$, and let $\mathfrak{k}_\Omega$ denote the Lie algebra of $\mathrm{Stab}_{G_0}(\Omega)$. We will use the notation $\mathrm{im}(\Omega):=\Omega(\Lambda^2\mathfrak{g}_-)\subseteq\mathfrak{g}$, $\ker(\Omega):=\{\psi\in\Lambda^2(\mathfrak{g}/\mathfrak{p}):\Omega(\psi)=0\}\subseteq\Lambda^2(\mathfrak{g}/\mathfrak{p})$, and \[\Omega\wedge\mathrm{id}:\Lambda^3(\mathfrak{g}/\mathfrak{p})\to\Lambda^2(\mathfrak{g}/\mathfrak{p})\] for the linear map given by \[X\wedge Y\wedge Z\mapsto\Omega(X\wedge Y)\wedge Z-\Omega(X\wedge Z)\wedge Y+\Omega(Y\wedge Z)\wedge X.\]

\begin{definition}We say that $\Omega\in\ker(\square)_+$ satisfies the \emph{Kruglikov-The property} if and only if $\mathrm{im}(\Omega)\subseteq\mathfrak{g}_-+\mathfrak{k}_\Omega$ and $\mathrm{im}(\Omega\wedge\mathrm{id})\subseteq\ker(\Omega)$.\end{definition}

\begin{theorem}Suppose $\Omega\in\ker(\square)_+$ satisfies the Kruglikov-The property. Then, the vector space $\mathfrak{j}_\Omega:=\mathfrak{g}_-+\mathfrak{k}_\Omega$ is a Lie algebra with respect to the bracket \[[X,Y]_{\mathfrak{j}_\Omega}:=[X,Y]-\Omega(X\wedge Y).\]\end{theorem}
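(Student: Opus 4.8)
The plan is to verify directly that $[\cdot,\cdot]_{\mathfrak{j}_\Omega}$ is well-defined (i.e.\ lands in $\mathfrak{j}_\Omega$), alternating, and satisfies the Jacobi identity. Throughout I will use the identification $\mathfrak{g}/\mathfrak{p}\approx\mathfrak{g}_-$ and extend $\Omega$ to all of $\mathfrak{j}_\Omega$ by declaring $\Omega(X\wedge Y):=\Omega(X_-\wedge Y_-)$, where $X_-$ denotes the $\mathfrak{g}_-$-component in the vector space decomposition $\mathfrak{j}_\Omega=\mathfrak{g}_-\oplus\mathfrak{k}_\Omega$ (which is direct since $\mathfrak{g}_-\cap\mathfrak{g}_0=\{0\}$); as $\mathfrak{k}_\Omega\subseteq\mathfrak{g}_0\subseteq\mathfrak{p}$, this agrees with projecting to $\mathfrak{g}/\mathfrak{p}$. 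I will also write $\bar{\Omega}:=\pi_{\mathfrak{g}_-}\circ\Omega$ for the $\mathfrak{g}_-$-valued part of $\Omega$.

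First I would record two structural facts. Since $[\mathfrak{g}_-,\mathfrak{g}_-]\subseteq\mathfrak{g}_-$ and $[\mathfrak{g}_0,\mathfrak{g}_-]\subseteq\mathfrak{g}_-$, and since $\mathfrak{k}_\Omega$ is a subalgebra of $\mathfrak{g}_0$, the space $\mathfrak{j}_\Omega$ is already a Lie subalgebra of $\mathfrak{g}$ under $[\cdot,\cdot]$; combined with $\mathrm{im}(\Omega)\subseteq\mathfrak{j}_\Omega$ (the first Kruglikov--The condition), this shows $[X,Y]_{\mathfrak{j}_\Omega}=[X,Y]-\Omega(X\wedge Y)\in\mathfrak{j}_\Omega$, so the bracket is well-defined, and alternation is immediate. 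Second, unwinding the definition of $\mathrm{Stab}_{G_0}(\Omega)$, an element $A\in\mathfrak{g}_0$ lies in $\mathfrak{k}_\Omega$ exactly when $\Omega$ is $\ad_A$-invariant, i.e.\ $[A,\Omega(X\wedge Y)]=\Omega([A,X]\wedge Y)+\Omega(X\wedge[A,Y])$ for all $X,Y\in\mathfrak{g}_-$. I will call this identity $(\star)$.

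Because the Jacobiator of any alternating bracket is a totally antisymmetric trilinear map, it suffices to check the Jacobi identity on arguments each lying either in $\mathfrak{g}_-$ or in $\mathfrak{k}_\Omega$, organized by how many lie in $\mathfrak{k}_\Omega$. When two or three arguments lie in $\mathfrak{k}_\Omega\subseteq\mathfrak{g}_0$, every $\Omega$-term has a $\mathfrak{k}_\Omega$-argument and so vanishes, and the Jacobiator collapses to the ordinary Jacobi identity in $\mathfrak{g}$. When exactly one argument, say $A\in\mathfrak{k}_\Omega$, is in $\mathfrak{k}_\Omega$ and $X,Y\in\mathfrak{g}_-$, the brackets $[A,X]_{\mathfrak{j}_\Omega}=[A,X]$ and $[A,Y]_{\mathfrak{j}_\Omega}=[A,Y]$ carry no $\Omega$-term; expanding, cancelling the pure $\mathfrak{g}$-Jacobi part, and applying $(\star)$ to the surviving term $[A,\Omega(X\wedge Y)]$ will make everything cancel to zero.

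The hard part will be the main case $X,Y,Z\in\mathfrak{g}_-$. Here I would expand, taking care that re-applying $\Omega$ only sees the $\mathfrak{g}_-$-component $\bar{\Omega}(X\wedge Y)$,
\[[[X,Y]_{\mathfrak{j}_\Omega},Z]_{\mathfrak{j}_\Omega}=[[X,Y],Z]-[\Omega(X\wedge Y),Z]-\Omega([X,Y]\wedge Z)+\Omega(\bar{\Omega}(X\wedge Y)\wedge Z).\]
Summing cyclically, the triple-bracket terms vanish by the Jacobi identity in $\mathfrak{g}$; the terms linear in $\Omega$ reassemble, after using antisymmetry, into exactly $(\partial\Omega)(X\wedge Y\wedge Z)$, which vanishes because $\Omega\in\ker(\square)_+\subseteq\ker(\partial)$; and the terms quadratic in $\Omega$ reassemble into $\sum_{\text{cyc}}\Omega(\bar{\Omega}(X\wedge Y)\wedge Z)=\Omega\big((\Omega\wedge\mathrm{id})(X\wedge Y\wedge Z)\big)$, which vanishes because $\mathrm{im}(\Omega\wedge\mathrm{id})\subseteq\ker(\Omega)$ (the second Kruglikov--The condition). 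The delicate point is precisely matching the linear term to $\partial\Omega$ and the quadratic term to $\Omega\circ(\Omega\wedge\mathrm{id})$ while keeping the signs and the $\mathfrak{g}_-$-projection straight; once these two identifications are made, all three groups vanish and the Jacobi identity follows, completing the proof that $\mathfrak{j}_\Omega$ is a Lie algebra.
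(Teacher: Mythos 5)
Your proof is correct and follows essentially the same route as the paper: both arguments rest on the same three facts, namely $\partial\Omega=0$, the infinitesimal stabilizer identity for $\mathfrak{k}_\Omega$, and $\Omega\circ(\Omega\wedge\mathrm{id})=0$. The only difference is organizational --- you check the Jacobiator case-by-case according to how many arguments lie in $\mathfrak{k}_\Omega$, whereas the paper packages the first two facts into a single auxiliary operator $``\partial\Omega"$ on $\Lambda^3\mathfrak{j}_\Omega$ and shows it vanishes before doing one unified expansion --- and your explicit handling of the $\mathfrak{g}_-$-projection $\bar{\Omega}$ in the quadratic term is a point the paper leaves implicit.
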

\begin{proof}The bracket $[\cdot,\cdot]_{\mathfrak{j}_\Omega}$ is clearly bilinear and alternating by definition, so it just remains to prove that left-bracketing is a derivation.

Let us write $``\partial\Omega":\Lambda^3\mathfrak{j}_\Omega\to\mathfrak{g}$ for the linear map given by \begin{align*}``\partial\Omega"(X\wedge Y\wedge Z) & :=[X,\Omega(Y\wedge Z)]-[Y,\Omega(X\wedge Z)]+[Z,\Omega(X\wedge Y)] \\ & \quad -\Omega([X,Y]\wedge Z)+\Omega([X,Z]\wedge Y)-\Omega([Y,Z]\wedge X)\end{align*} when $X,Y,Z\in\mathfrak{j}_\Omega$. Then, if $X,Y,Z\in\mathfrak{g}_-<\mathfrak{j}_\Omega$, then \[``\partial\Omega"(X\wedge Y\wedge Z)=\partial\Omega(X\wedge Y\wedge Z)=0,\] since $\partial\Omega=0$. If $X,Y\in\mathfrak{j}_\Omega$ and $Z\in\mathfrak{k}_\Omega$, then \begin{align*}``\partial\Omega"(X\wedge Y\wedge Z) & =[Z,\Omega(X\wedge Y)]-\Omega([Z,X]\wedge Y)-\Omega(X\wedge[Z,Y]) \\ & =(Z\cdot\Omega)(X\wedge Y)=0,\end{align*} since $\mathfrak{k}_\Omega$ is the Lie algebra of $\mathrm{Stab}_{G_0}(\Omega)$. Thus, for all $X,Y,Z\in\mathfrak{j}_\Omega$, we have $``\partial\Omega"(X\wedge Y\wedge Z)=0$, so \begin{align*}[X,[Y,Z]_{\mathfrak{j}_\Omega}]_{\mathfrak{j}_\Omega} & =[X,[Y,Z]-\Omega(Y\wedge Z)]_{\mathfrak{j}_\Omega} \\ & =[X,[Y,Z]-\Omega(Y\wedge Z)]-\Omega(X\wedge([Y,Z]-\Omega(Y\wedge Z))) \\ & =[[X,Y],Z]+[Y,[X,Z]]-\Big([X,\Omega(Y\wedge Z)] \\ & \quad -\Omega([Y,Z]\wedge X)\Big)+\Omega(X\wedge\Omega(Y\wedge Z)) \\ & =[[X,Y],Z]+[Y,[X,Z]]+\Big([Z,\Omega(X\wedge Y)]-[Y,\Omega(X\wedge Z)] \\ & \quad -\Omega([X,Y]\wedge Z+Y\wedge [X,Z])-``\partial\Omega"(X\wedge Y\wedge Z)\Big) \\ & \quad +\Omega(X\wedge\Omega(Y\wedge Z)) \\ & =[[X,Y]_{\mathfrak{j}_\Omega},Z]_{\mathfrak{j}_\Omega}+[Y,[X,Z]_{\mathfrak{j}_\Omega}]_{\mathfrak{j}_\Omega}-\Omega((\Omega\wedge\mathrm{id})(X\wedge Y\wedge Z)) \\ & \quad -``\partial\Omega"(X\wedge Y\wedge Z) \\ & =[[X,Y]_{\mathfrak{j}_\Omega},Z]_{\mathfrak{j}_\Omega}+[Y,[X,Z]_{\mathfrak{j}_\Omega}]_{\mathfrak{j}_\Omega}.\quad\qed\end{align*}\end{proof}

Both the above theorem and its proof were, in essence, already given by Kruglikov and The in Lemma 4.1.1 of \cite{GapPhenomenon} under more specific assumptions. They then go on to prove that, when $\Omega$ is a lowest weight vector of one of the $\mathfrak{g}_0$-irreducible components of $\ker(\square)_+$ and $G$ is either complex or split-real, then there exists a \textit{local} homogeneous geometry of type $(G,P)$ whose algebra of infinitesimal symmetries contains $\mathfrak{j}_\Omega$. We would like to consider cases where this construction gives a genuine (``global") homogeneous geometry of type $(G,P)$.

\begin{definition}\label{trees} Suppose $\Omega\in\ker(\square)_+$ satisfies the Kruglikov-The property. We say that $\Omega$ is a \emph{harmonic seed} if and only if there exists a model geometry $(J_\Omega,K_\Omega)$, an isomorphism $i:K_\Omega\to\mathrm{Stab}_{G_0}(\Omega)\leq P$, and an isomorphism of $K_\Omega$-representations $\psi:\mathfrak{j}_\Omega\to\mathfrak{g}_-+\mathfrak{k}_\Omega\leq\mathfrak{g}$ such that $\mathfrak{j}_\Omega$ is the Lie algebra of $J_\Omega$, $J_\Omega/K_\Omega$ is simply connected, and $\psi|_{\mathfrak{k}_\Omega}=i_*$. In such a case, we say that the homogeneous Cartan geometry $(J_\Omega\times_{K_\Omega}P,\MC{\Omega})$ of type $(G,P)$ over $J_\Omega/K_\Omega$ induced by the extension functor $(i,\psi)$ is the \emph{curvature tree} grown from $\Omega$.\end{definition}

Writing $\MC{J_\Omega}$ for the Maurer-Cartan form of $J_\Omega$ and $\MC{P}$ for the Maurer-Cartan form of $P$, the Cartan connection $\MC{\Omega}$ of the curvature tree grown from a harmonic seed $\Omega$ is given by \[(\MC{\Omega})_{(j,p)}=\Ad_{p^{-1}}\psi(\MC{J_\Omega})+\MC{P}\] at each $(j,p)\in J_\Omega\times_{K_\Omega}P$, and its curvature at $(j,p)$ is just \[X\wedge Y\mapsto (p\cdot\Omega)(\MC{\Omega}(X)\wedge\MC{\Omega}(Y)).\] In other words, under the identification of the tangent spaces of $J_\Omega\times_{K_\Omega}P$ with $\mathfrak{g}$ given by $\MC{\Omega}$, the curvature tree grown from a harmonic seed $\Omega$ has constant curvature $\Omega$ when restricted to $J_\Omega\subseteq J_\Omega\times_{K_\Omega}P$, and because $\Omega\in\ker(\square)_+$, the curvature tree grown from $\Omega$ is a regular and normal parabolic geometry.

Of course, in order for this to be useful, we would like some sort of criterion for when a Kostant-harmonic $\Omega$ of positive homogeneity satisfying the Kruglikov-The property is a harmonic seed. We are, thus, led to our first main theorem.

\begin{theorem}\label{openingnumber} Denote by $\mathfrak{b}_-$ the nilpotent subalgebra of $\mathfrak{g}$ generated by the restricted root spaces of the negative restricted roots. If $\Omega\in\ker(\square)_+$ satisfies the Kruglikov-The property and $\mathrm{im}(\Omega)\subseteq\mathfrak{b}_-$, then $\Omega$ is a harmonic seed.\end{theorem}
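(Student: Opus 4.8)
The plan is to realize $\Omega$ as a harmonic seed by exhibiting the data $(J_\Omega,K_\Omega,i,\psi)$ of Definition \ref{trees} as explicitly as possible. I would take $K_\Omega:=\mathrm{Stab}_{G_0}(\Omega)$ itself, which is a closed subgroup of $G_0\leq P$ with Lie algebra $\mathfrak{k}_\Omega$, let $i$ be the identity, and let $\psi$ be the identity map of the underlying vector space $\mathfrak{g}_-+\mathfrak{k}_\Omega=\mathfrak{j}_\Omega$. With these choices $\psi|_{\mathfrak{k}_\Omega}=i_*$ is automatic, and ``$\psi$ is a $K_\Omega$-representation isomorphism'' reduces to the requirement that the adjoint action of $K_\Omega$ on $\mathfrak{j}_\Omega$ coincides with $\mathrm{Ad}$ on $\mathfrak{g}_-+\mathfrak{k}_\Omega$; this will hold because $K_\Omega$ fixes $\Omega$ and therefore preserves the bracket $[\cdot,\cdot]_{\mathfrak{j}_\Omega}$ (one computes $\mathrm{Ad}_k[X,Y]_{\mathfrak{j}_\Omega}=[\mathrm{Ad}_kX,\mathrm{Ad}_kY]-\Omega(\mathrm{Ad}_kX\wedge\mathrm{Ad}_kY)=[\mathrm{Ad}_kX,\mathrm{Ad}_kY]_{\mathfrak{j}_\Omega}$ for $k\in K_\Omega$). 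Thus everything reduces to producing a Lie group $J_\Omega$ with Lie algebra $\mathfrak{j}_\Omega$ containing $K_\Omega$ as a closed subgroup inducing $\mathfrak{k}_\Omega\hookrightarrow\mathfrak{j}_\Omega$, for which $J_\Omega/K_\Omega$ is simply connected.

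The hypothesis $\mathrm{im}(\Omega)\subseteq\mathfrak{b}_-$ enters through the subspace $\mathfrak{n}_\Omega:=\mathfrak{j}_\Omega\cap\mathfrak{b}_-=\mathfrak{g}_-+(\mathfrak{k}_\Omega\cap\mathfrak{b}_-)$. First I would check that $\mathfrak{n}_\Omega$ is a subalgebra for $[\cdot,\cdot]_{\mathfrak{j}_\Omega}$: for $X,Y\in\mathfrak{n}_\Omega$ the ordinary bracket $[X,Y]$ lies in $\mathfrak{b}_-$ and has its grading-zero part in $\mathfrak{k}_\Omega$, while $\Omega(X\wedge Y)\in\mathrm{im}(\Omega)\subseteq\mathfrak{b}_-\cap(\mathfrak{g}_-+\mathfrak{k}_\Omega)=\mathfrak{n}_\Omega$ by the Kruglikov--The property together with the new hypothesis, so $[X,Y]_{\mathfrak{j}_\Omega}\in\mathfrak{n}_\Omega$. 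The crucial claim is that $\mathfrak{n}_\Omega$, with the \emph{modified} bracket, is nilpotent. I would prove this using a filtration of $\mathfrak{b}_-$ by $-\mathrm{ht}$ of restricted roots: the ordinary bracket strictly raises this degree because heights add, and the $\Omega$-correction does not lower it, since its output roots $\zeta\in-\Delta^+$ (this is exactly where $\mathrm{im}(\Omega)\subseteq\mathfrak{b}_-$ is used) are forced to be deep in $\mathfrak{b}_-$ relative to the positive roots labelling the slots $\Omega$ contracts. Granting this, $\mathfrak{n}_\Omega$ integrates to a connected, simply connected nilpotent Lie group $N_\Omega$, diffeomorphic to $\mathbb{R}^{\dim\mathfrak{n}_\Omega}$.

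Next I would assemble $J_\Omega$. Since $K_\Omega=\mathrm{Stab}_{G_0}(\Omega)$ fixes $\Omega$ and, as a subgroup of $G_0$, preserves the grading, $\mathrm{Ad}$ gives an action of $K_\Omega$ on $\mathfrak{j}_\Omega$ by automorphisms of $[\cdot,\cdot]_{\mathfrak{j}_\Omega}$ whose differential on $\mathfrak{k}_\Omega\subseteq\mathfrak{j}_\Omega$ is the adjoint action. This makes $(\mathfrak{j}_\Omega,K_\Omega)$ a compatible pair, which integrates to a Lie group $J_\Omega$ with $\mathrm{Lie}(J_\Omega)=\mathfrak{j}_\Omega$ containing $K_\Omega$ as a closed subgroup (concretely, as an appropriate quotient of the simply connected group of $\mathfrak{j}_\Omega$ semidirectly multiplied by $K_\Omega$). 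Because $\mathfrak{j}_\Omega=\mathfrak{n}_\Omega+\mathfrak{k}_\Omega$, the subgroup $N_\Omega$ acts transitively on $J_\Omega/K_\Omega$, so $J_\Omega/K_\Omega\cong N_\Omega/(N_\Omega\cap K_\Omega)$; as $N_\Omega\cap K_\Omega$ is the closed connected subgroup $\exp(\mathfrak{k}_\Omega\cap\mathfrak{b}_-)$ of the simply connected nilpotent group $N_\Omega$, this quotient is diffeomorphic to $\mathbb{R}^{\dim\mathfrak{g}_-}$ and in particular simply connected. Feeding $(J_\Omega,K_\Omega,i,\psi)$ into Definition \ref{trees} then exhibits $\Omega$ as a harmonic seed.

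The hard part will be the nilpotency of $\mathfrak{n}_\Omega$ --- equivalently, showing that $\mathrm{im}(\Omega)\subseteq\mathfrak{b}_-$ genuinely prevents the modified bracket from generating a non-nilpotent piece inside $\mathfrak{g}_0$ --- since this is precisely where the extra hypothesis (beyond the Kruglikov--The property) is needed and is what upgrades the merely local construction of \cite{GapPhenomenon} to a global one. A secondary point requiring care is the topological bookkeeping of the last step: $\mathfrak{g}_-$ itself need \emph{not} be a subalgebra, because its self-bracket can acquire a component in $\mathfrak{k}_\Omega\cap\mathfrak{b}_-$, so one cannot simply form a semidirect product with fiber $\mathfrak{g}_-$; instead the simple connectivity must be routed through $N_\Omega$, and one must verify that $N_\Omega\cap K_\Omega$ is connected so that the homotopy exact sequence of $N_\Omega\to N_\Omega/(N_\Omega\cap K_\Omega)$ forces $\pi_1(J_\Omega/K_\Omega)=0$ even when $K_\Omega$ is disconnected.
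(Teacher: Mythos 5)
Your overall architecture (stabilizer $K_\Omega=\mathrm{Stab}_{G_0}(\Omega)$ with $i=\mathrm{id}$, $\psi=\mathrm{id}$, a distinguished ``negative'' subalgebra carrying the topology, simple connectivity via a homogeneous-space fibration) matches the paper's, but the two steps you yourself flag as the hard ones are exactly where the proof lives, and your sketch of the first of them is based on a false premise. You claim that $(\mathfrak{n}_\Omega,[\cdot,\cdot]_{\mathfrak{j}_\Omega})$ with $\mathfrak{n}_\Omega=\mathfrak{j}_\Omega\cap\mathfrak{b}_-$ is nilpotent because the height filtration on $\mathfrak{b}_-$ is respected: the ordinary bracket raises height and ``the $\Omega$-correction does not lower it.'' That last assertion fails in examples the paper itself uses: for the almost Grassmannian seed $\Omega=(\eta_{\varepsilon_2-\varepsilon_3})_\kgf\wedge(\eta_{\varepsilon_2-\varepsilon_4})_\kgf\otimes\eta_{\varepsilon_2-\varepsilon_1}$ ($k=2$, $m=4$), the correction sends a pair of root spaces of heights $1$ and $2$ to $\mathfrak{g}_{\varepsilon_2-\varepsilon_1}$, of height $1$, so the filtration argument does not close up. (Also note the theorem is stated for arbitrary $\Omega\in\ker(\square)_+$ with the Kruglikov--The property, not just root-vector lowest weight vectors, so ``the output roots $\zeta$'' of $\Omega$ are not even singled out in general.) The instructive point is that the paper never needs nilpotency, nor even solvability, of any modified-bracket algebra for this theorem: it works with the ideal $\mathfrak{f}_\Omega:=\mathfrak{g}_-+\mathrm{im}(\Omega)$ of $\mathfrak{j}_\Omega$ (ideality follows directly from the Kruglikov--The property, and is something your $\mathfrak{n}_\Omega$ need not satisfy), takes $F_\Omega$ to be the simply connected group of $\mathfrak{f}_\Omega$ by Lie's third theorem regardless of its structure, and obtains simple connectivity of $J_\Omega/K_\Omega\cong F_\Omega/N_\Omega$ from the connectedness of $N_\Omega$ alone.

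The second gap is the integration step: ``$(\mathfrak{j}_\Omega,K_\Omega)$ integrates to a Lie group $J_\Omega$ containing $K_\Omega$ as a closed subgroup'' is asserted via an unspecified quotient of a semidirect product, but such integrations have genuine obstructions (the connected subgroup of the simply connected group of $\mathfrak{j}_\Omega$ with Lie algebra $\mathfrak{k}_\Omega$ need not be closed, and need not cover, or be covered by, $K_\Omega^\circ$ compatibly), and this is precisely where the hypothesis $\mathrm{im}(\Omega)\subseteq\mathfrak{b}_-$ earns its keep in the paper. There one sets $\mathfrak{n}_\Omega:=\mathfrak{f}_\Omega\cap\mathfrak{k}_\Omega$ and uses $\mathfrak{n}_\Omega<\mathfrak{b}_-$ to conclude that $N_\Omega$ is a simply connected closed normal subgroup of $K_\Omega$ meeting every compact subgroup trivially (hence admitting a global smooth section $K_\Omega/N_\Omega\to K_\Omega$), that the covering $N_\Omega\to\overline{N_\Omega}\leq F_\Omega$ is an isomorphism onto a closed subgroup, and then builds $J_\Omega$ explicitly as an extension $\{e\}\to F_\Omega\to J_\Omega\to K_\Omega/N_\Omega\to\{e\}$ via a cocycle $\tilde{\alpha}$, verifying associativity by hand. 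Without an argument of this kind your $J_\Omega$ is not known to exist with the required properties, and the $K_\Omega$-equivariance of $\psi$ (which in Definition \ref{trees} compares conjugation in $J_\Omega$ with $\Ad^G$ on $\mathfrak{g}_-+\mathfrak{k}_\Omega$, not merely invariance of the modified bracket) is not established. So the proposal correctly identifies the shape of the construction but leaves both load-bearing steps unproved, and the justification offered for the first would not survive contact with the paper's own examples.
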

\begin{proof}We will construct the Lie group $J_\Omega$ as an extension of the form \[\{e\}\to F_\Omega\hookrightarrow J_\Omega\twoheadrightarrow K_\Omega/N_\Omega\to\{e\},\] where $K_\Omega=\mathrm{Stab}_{G_0}(\Omega)$, $N_\Omega$ is a closed normal subgroup of $K_\Omega$, and $F_\Omega$ is a closed normal subgroup of $J_\Omega$.

Define $\mathfrak{f}_\Omega:=\mathfrak{g}_-+\mathrm{im}(\Omega)$, thought of as a subspace of $\mathfrak{j}_\Omega$. Then, since $K_\Omega$ stabilizes $\Omega$, and $\Omega(X\wedge Y)\in\mathrm{im}(\Omega)$ by definition, \[[\mathfrak{g}_-+\mathfrak{k}_\Omega,\mathfrak{g}_-]_{\mathfrak{j}_\Omega}\subseteq\mathfrak{g}_-+\mathrm{im}(\Omega)=\mathfrak{f}_\Omega\] and \[[\mathfrak{g}_-+\mathfrak{k}_\Omega,\mathrm{im}(\Omega)]_{\mathfrak{j}_\Omega}\subseteq\mathfrak{g}_-+\mathrm{im}(\Omega)=\mathfrak{f}_\Omega,\] so $\mathfrak{f}_\Omega$ is an ideal of $\mathfrak{j}_\Omega$. We will denote by $F_\Omega$ the simply connected Lie group with Lie algebra $\mathfrak{f}_\Omega$.

Similarly, define $\mathfrak{n}_\Omega:=(\mathfrak{g}_-+\mathrm{im}(\Omega))\cap\mathfrak{k}_\Omega$. Because $\mathfrak{g}_-+\mathrm{im}(\Omega)$ and $\mathfrak{k}_\Omega$ are preserved by the adjoint action of $K_\Omega$, $\mathfrak{n}_\Omega$ is an ideal of $\mathfrak{k}_\Omega$, and denoting by $N_\Omega$ the connected subgroup of $K_\Omega$ generated by $\mathfrak{n}_\Omega$, it also follows that $N_\Omega$ is a normal subgroup of $K_\Omega$. Moreover, because $\mathrm{im}(\Omega)\subseteq\mathfrak{b}_-$, we also have that $\mathfrak{n}_\Omega<\mathfrak{b}_-$, so every compact subgroup of $K_\Omega$ intersects $N_\Omega$ trivially, hence $N_\Omega$ is a simply connected closed normal subgroup of $K_\Omega$ and there exists a smooth (but not necessarily homomorphic) section $\sigma:K_\Omega/N_\Omega\to K_\Omega$ for the natural quotient map $q_{{}_{N_\Omega}}:K_\Omega\to K_\Omega/N_\Omega$, which we may assume maps the identity element $\bar{e}\in K_\Omega/N_\Omega$ to the identity element $e\in K_\Omega$ by left-multiplying by $\sigma(\bar{e})^{-1}\in N_\Omega$ if necessary. Note that for $\bar{k}_1,\bar{k}_2\in K_\Omega/N_\Omega$, we always have $\sigma(\bar{k}_1)\sigma(\bar{k}_2)\sigma(\bar{k}_1\bar{k}_2)^{-1}\in N_\Omega$.

Since $\mathfrak{n}_\Omega\subseteq\mathfrak{g}_-+\mathrm{im}(\Omega)=\mathfrak{f}_\Omega$, we may also consider $\mathfrak{n}_\Omega$ as a subalgebra of $\mathfrak{f}_\Omega$. In particular, $\mathfrak{n}_\Omega$ generates a connected subgroup $\overline{N_\Omega}$ of $F_\Omega$, and the adjoint representation $\Ad^\Omega$ of $F_\Omega$ on $\mathfrak{f}_\Omega$ restricts to a representation of $\overline{N_\Omega}$ on $\mathfrak{f}_\Omega$. At the level of Lie algebras, this restriction coincides with the usual adjoint action of $\mathfrak{n}_\Omega\leq\mathfrak{k}_\Omega$ on $\mathfrak{g}_-+\mathrm{im}(\Omega)$ because $\Omega(Z\wedge X)=0$ whenever $Z\in\mathfrak{k}_\Omega$, so denoting by $\alpha:N_\Omega\to\overline{N_\Omega}$ the covering homomorphism from $N_\Omega$ to $\overline{N_\Omega}$, we have that $\Ad_n|_{\mathfrak{g}_-+\mathrm{im}(\Omega)}=\Ad^\Omega_{\alpha(n)}$ for every $n\in N_\Omega$. But because $\mathfrak{n}_\Omega<\mathfrak{b}_-$, $\Ad_n$ only acts trivially on $\mathfrak{g}_-$ when $n$ is the identity element, so $\Ad^\Omega_{\alpha(n)}$ can only be the identity transformation when $n$ is the identity element. Thus, $\alpha$ is an isomorphism of Lie groups. Moreover, since the adjoint action of $\overline{N_\Omega}$ on $\mathfrak{f}_\Omega$ is faithful and unipotent, $\overline{N_\Omega}$ also cannot intersect any compact subgroups of $F_\Omega$ nontrivially, so $\overline{N_\Omega}$ is a closed subgroup of $F_\Omega$.

Consider the smooth manifold $F_\Omega\times K_\Omega/N_\Omega$. Denoting by $\cdot$ the action of $K_\Omega$ on $F_\Omega$ and by juxtaposition the group operations in $F_\Omega$ and $K_\Omega/N_\Omega$, define a map $\tilde{\alpha}:(K_\Omega/N_\Omega)^2\to\overline{N_\Omega}$ by \[\tilde{\alpha}(\bar{k}_1,\bar{k}_2):=\alpha\Big(\sigma(\bar{k}_1)\sigma(\bar{k}_2)\sigma(\bar{k}_1\bar{k}_2)^{-1}\Big)\] and a map $\mu_\Omega:(F_\Omega\times K_\Omega/N_\Omega)^2\to F_\Omega\times K_\Omega/N_\Omega$ by \[\mu_\Omega((f_1,\bar{k}_1),(f_2,\bar{k}_2)):=\Big(f_1\,(\sigma(\bar{k}_1)\cdot f_2)\,\tilde{\alpha}(\bar{k}_1,\bar{k}_2),\bar{k}_1\bar{k}_2\Big).\] We want to show that $\mu_\Omega$ gives a Lie group structure on $F_\Omega\times K_\Omega/N_\Omega$. Because $\alpha$ and $\sigma$ are smooth, $K_\Omega$ acts smoothly on $F_\Omega$, and $F_\Omega$ and $K_\Omega/N_\Omega$ are Lie groups, $\mu_\Omega$ is smooth as well, so we just have to show that $\mu_\Omega$ is a group operation. To do this, we will follow the ideas of Remark 11.1.22 of \cite{HilgertNeeb}.

Denoting by $e$ the identity element of $F_\Omega$, \[\mu_\Omega((e,\bar{e}),(f,\bar{k}))=\Big(e(\sigma(\bar{e})\cdot f)\alpha\big(\sigma(\bar{e})\sigma(\bar{k})\sigma(\bar{k})^{-1}\big),\bar{k}\Big)=(f,\bar{k}),\] so $(e,\bar{e})$ gives an identity element, and \[\mu_\Omega\Bigg(\Big(\big((\sigma(\bar{k}^{-1})\cdot f)\tilde{\alpha}(\bar{k}^{-1},\bar{k})\big)^{-1},\bar{k}^{-1}\Big),(f,\bar{k})\Bigg)=(e,\bar{e}),\] so \[(f,\bar{k})^{-1}:=\Big(\big((\sigma(\bar{k}^{-1})\cdot f)\tilde{\alpha}(\bar{k}^{-1},\bar{k})\big)^{-1},\bar{k}^{-1}\Big)\] gives an inverse map. From \begin{align*}\tilde{\alpha}(\bar{k}_1,\bar{k}_2)(\sigma(\bar{k}_1\bar{k}_2)\cdot f_3) & =(\alpha^{-1}(\tilde{\alpha}(\bar{k}_1,\bar{k}_2))\cdot\sigma(\bar{k}_1\bar{k}_2)\cdot f_3)\tilde{\alpha}(\bar{k}_1,\bar{k}_2) \\ & =(\sigma(\bar{k}_1)\sigma(\bar{k}_2)\sigma(\bar{k}_1\bar{k}_2)^{-1}\cdot\sigma(\bar{k}_1\bar{k}_2)\cdot f_3)\tilde{\alpha}(\bar{k}_1,\bar{k}_2) \\ & =(\sigma(\bar{k}_1)\sigma(\bar{k}_2)\cdot f_3)\tilde{\alpha}(\bar{k}_1,\bar{k}_2)\end{align*} and \begin{align*}\tilde{\alpha}(\bar{k}_1,\bar{k}_2)\tilde{\alpha}(\bar{k}_1\bar{k}_2,\bar{k}_3) & =\alpha(\sigma(\bar{k}_1)\sigma(\bar{k}_2)\sigma(\bar{k}_1\bar{k}_2)^{-1}\sigma(\bar{k}_1\bar{k}_2)\sigma(\bar{k}_3)\sigma(\bar{k}_1\bar{k}_2\bar{k}_3)^{-1}) \\ & =\alpha(\sigma(\bar{k}_1)\sigma(\bar{k}_2)\sigma(\bar{k}_3)\sigma(\bar{k}_1\bar{k}_2\bar{k}_3)^{-1}) \\ & =(\sigma(\bar{k}_1)\cdot\tilde{\alpha}(\bar{k}_2,\bar{k}_3))\tilde{\alpha}(\bar{k}_1,\bar{k}_2\bar{k}_3),\end{align*} we see that \[\tilde{\alpha}(\bar{k}_1,\bar{k}_2)(\sigma(\bar{k}_1\bar{k}_2)\cdot f_3)\tilde{\alpha}(\bar{k}_1\bar{k}_2,\bar{k}_3)=(\sigma(\bar{k}_1)\cdot\tilde{\alpha}(\bar{k}_2,\bar{k}_3))\tilde{\alpha}(\bar{k}_1,\bar{k}_2\bar{k}_3),\] so because $\mu_\Omega(\mu_\Omega((f_1,\bar{k}_1),(f_2,\bar{k}_2)),(f_3,\bar{k}_3))$ is given by \[\Bigg(\Big(f_1\,(\sigma(\bar{k}_1)\cdot f_2)\,\tilde{\alpha}(\bar{k}_1,\bar{k}_2)\Big)(\sigma(\bar{k}_1\bar{k}_2)\cdot f_3)\tilde{\alpha}(\bar{k}_1\bar{k}_2,\bar{k}_3),\bar{k}_1\bar{k}_2\bar{k}_3\Bigg)\] and $\mu_\Omega((f_1,\bar{k}_1),\mu_\Omega((f_2,\bar{k}_2),(f_3,\bar{k}_3)))$ is given by \[\Bigg(f_1\Big(\sigma(\bar{k}_1)\cdot\Big(f_2(\sigma(\bar{k}_2)\cdot f_3)\tilde{\alpha}(\bar{k}_2,\bar{k}_3)\Big)\Big)\tilde{\alpha}(\bar{k}_1,\bar{k}_2\bar{k}_3),\bar{k}_1\bar{k}_2\bar{k}_3\Bigg),\] this shows that $\mu_\Omega$ is associative.

Define $J_\Omega$ to be the Lie group with underlying manifold $F_\Omega\times K_\Omega/N_\Omega$ and with group operation $\mu_\Omega$. Then, $K_\Omega$ embeds as a closed subgroup of $J_\Omega$ by identifying $k\in K_\Omega$ with $(\alpha(k\sigma(q_{{}_{N_\Omega}}(k))^{-1}),q_{{}_{N_\Omega}}(k))\in J_\Omega$. Similarly, we can embed $F_\Omega$ as a closed normal subgroup of $J_\Omega$ by identifying $f\in F_\Omega$ with $(f,\bar{e})\in J_\Omega$. As subgroups of $J_\Omega$, we see that $F_\Omega\cap K_\Omega=N_\Omega$, and clearly $F_\Omega K_\Omega=J_\Omega$, so if $\tilde{\mathfrak{j}}_\Omega$ is the Lie algebra of $J_\Omega$, then we get an isomorphism of vector spaces $\psi:\tilde{\mathfrak{j}}_\Omega\to\mathfrak{g}_-+\mathfrak{k}_\Omega$. Moreover, the action of $K_\Omega$ on $F_\Omega$ corresponds to conjugation in $J_\Omega$ by construction, so $\psi$ is an isomorphism of $K_\Omega$-representations, and since the bracket of $\tilde{\mathfrak{j}}_\Omega$ coincides with the bracket of $\mathfrak{j}_\Omega$ on $\mathfrak{f}_\Omega\leq\tilde{\mathfrak{j}}_\Omega$ and on $\mathfrak{k}_\Omega\leq\tilde{\mathfrak{j}}_\Omega$, $\tilde{\mathfrak{j}}_\Omega$ is isomorphic to $\mathfrak{j}_\Omega$.

Letting $i:(\alpha(k\sigma(q_{{}_{N_\Omega}}(k))^{-1}),q_{{}_{N_\Omega}}(k))\mapsto k$, we have that $\psi|_{\mathfrak{k}_\Omega}=i_*$, and because $F_\Omega$ is simply connected and $N_\Omega$ is connected, $J_\Omega/K_\Omega\cong F_\Omega/N_\Omega$ is simply connected. Thus, $(J_\Omega,K_\Omega)$ and $(i,\psi)$ satisfy the conditions in Definition \ref{trees}, so $\Omega$ is a harmonic seed.\qed\end{proof}

In the rest of the paper, we will be primarily focused on lowest weight vectors of the form $\Omega=\sum_k(\eta_{\beta,k})_\kgf\wedge(\eta_{\gamma,k})_\kgf\otimes\eta_{\zeta,k}$ in $\ker(\square)_+$, for which we can considerably simplify and strengthen the above theorem as follows.

\begin{theorem}\label{main} Suppose $\Omega=\sum_k(\eta_{\beta,k})_\kgf\wedge(\eta_{\gamma,k})_\kgf\otimes\eta_{\zeta,k}\in\ker(\square)_+$ is a lowest weight vector, where $\beta,\gamma\in\Delta^+(\mathfrak{p}_+)$ and $\zeta\in\Delta\setminus\Delta^+(\mathfrak{p}_+)$ with $\eta_{\beta,k}\in\mathfrak{g}_\beta$, $\eta_{\gamma,k}\in\mathfrak{g}_\gamma$, and $\eta_{\zeta,k}\in\mathfrak{g}_\zeta$ for each $k$. If $\zeta\not\in\{-\beta,-\gamma\}$, then $\Omega$ is a harmonic seed with $J_\Omega/K_\Omega$ diffeomorphic to $\mathbb{R}^{\dim(\mathfrak{g}_-)}$.\end{theorem}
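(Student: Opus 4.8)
The plan is to deduce everything from Theorem \ref{openingnumber}: I will show that such an $\Omega$ automatically satisfies the Kruglikov-The property and has $\mathrm{im}(\Omega)\subseteq\mathfrak{b}_-$, and then read off the diffeomorphism type of $J_\Omega/K_\Omega$ from the construction in that proof. Throughout, the decisive simplification is that $\Omega$ takes values in the single root space $\mathfrak{g}_\zeta$, so $\mathrm{im}(\Omega)\subseteq\mathfrak{g}_\zeta$, and everything is then controlled by where $\zeta$ sits.

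First I would show $\zeta\in-\Delta^+$. This is exactly the first half of the proof of Lemma \ref{assume}: after arranging that the $\eta_{\beta,k}\wedge\eta_{\gamma,k}$ are linearly independent, the identity $\theta(\eta_{\zeta,k})\cdot\Omega=0$ (valid since $\Omega$ is a lowest weight vector) rules out $\zeta\in\Delta^+(\mathfrak{g}_0)$, while $\zeta\notin\Delta^+(\mathfrak{p}_+)$ is hypothesized, so $\zeta\in-\Delta^+$; note that this half of the argument never invokes the extra hypothesis $\beta-\gamma\notin\Delta^+$ present in Lemma \ref{assume}. Hence $\mathrm{im}(\Omega)\subseteq\mathfrak{g}_\zeta\subseteq\mathfrak{b}_-$. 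Next I would verify the two parts of the Kruglikov-The property. For $\mathrm{im}(\Omega)\subseteq\mathfrak{g}_-+\mathfrak{k}_\Omega$ I split on the grading: if $\zeta(E_\text{gr})<0$ then $\mathfrak{g}_\zeta\subseteq\mathfrak{g}_-$ and the inclusion is immediate, whereas if $\zeta(E_\text{gr})=0$ then $\zeta\in-\Delta^+(\mathfrak{g}_0)$, so the lowest weight condition gives $\eta_{\zeta,k}\cdot\Omega=0$ for each $k$, i.e. $\mathrm{im}(\Omega)\subseteq\mathfrak{k}_\Omega$. For $\mathrm{im}(\Omega\wedge\mathrm{id})\subseteq\ker(\Omega)$—the place where $\zeta\notin\{-\beta,-\gamma\}$ is used—the point is that the output of $\Omega$ can never be fed back into $\Omega$ as an input: in $\Omega(\Omega(X\wedge Y)\wedge Z)$ the factor $\Omega(X\wedge Y)\in\mathfrak{g}_\zeta$ would have to pair under $\kgf$ with some $\eta_{\beta,k}$ or $\eta_{\gamma,k}$, which is impossible unless $\zeta=-\beta$ or $\zeta=-\gamma$, so all such terms vanish (and when $\zeta(E_\text{gr})=0$, $\Omega\wedge\mathrm{id}$ already vanishes since $\mathrm{im}(\Omega)\subseteq\mathfrak{p}$ projects to zero in $\mathfrak{g}/\mathfrak{p}$). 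With both conditions in hand, Theorem \ref{openingnumber} yields that $\Omega$ is a harmonic seed.

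It remains to identify the diffeomorphism type, for which I use the notation of the proof of Theorem \ref{openingnumber}: $J_\Omega/K_\Omega\cong F_\Omega/N_\Omega$ with $\mathfrak{f}_\Omega=\mathfrak{g}_-+\mathrm{im}(\Omega)$ and $\mathfrak{n}_\Omega=\mathfrak{f}_\Omega\cap\mathfrak{k}_\Omega$. In the case $\zeta(E_\text{gr})<0$ we have $\mathfrak{f}_\Omega=\mathfrak{g}_-$ and $\mathfrak{n}_\Omega=\mathfrak{g}_-\cap\mathfrak{k}_\Omega=0$, and in the case $\zeta(E_\text{gr})=0$ we have $\mathfrak{f}_\Omega=\mathfrak{g}_-\oplus\mathrm{im}(\Omega)$ and $\mathfrak{n}_\Omega=\mathrm{im}(\Omega)$; either way $\dim\mathfrak{f}_\Omega-\dim\mathfrak{n}_\Omega=\dim\mathfrak{g}_-$. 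To upgrade this dimension count to a diffeomorphism $F_\Omega/N_\Omega\cong\mathbb{R}^{\dim\mathfrak{g}_-}$, I would show that $\mathfrak{f}_\Omega$, equipped with the deformed bracket $[\cdot,\cdot]_{\mathfrak{j}_\Omega}$, is nilpotent; then $F_\Omega$ is a simply connected nilpotent Lie group, diffeomorphic to $\mathbb{R}^{\dim\mathfrak{f}_\Omega}$ via the exponential map, and its quotient by the closed connected subgroup $N_\Omega$ is diffeomorphic to $\mathbb{R}^{\dim\mathfrak{g}_-}$.

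I expect the nilpotency of $(\mathfrak{f}_\Omega,[\cdot,\cdot]_{\mathfrak{j}_\Omega})$ to be the main obstacle. The difficulty is that the deformation term $\Omega(X\wedge Y)\in\mathfrak{g}_\zeta$ need not respect the $E_\text{gr}$-grading, so nilpotency cannot be read off the grading directly. The approach I would take is via Engel's theorem, showing each $\mathrm{ad}^{\mathfrak{j}_\Omega}_X$ is nilpotent: the correction $\Omega(X\wedge\cdot)$ is square-zero, since its image lies in $\mathfrak{g}_\zeta$ and it annihilates $\mathfrak{g}_\zeta$ (again because $\zeta\notin\{-\beta,-\gamma\}$), while $\mathrm{ad}_X$ is nilpotent on $\mathfrak{b}_-$; the real task is to rule out sustained cycles between $\mathfrak{g}_\zeta$ and the input spaces under iterated bracketing. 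Here I would exploit the grading constraint forced by $\partial\Omega=0$ (as in the portion of Lemma \ref{assume}'s proof that does not use $\beta-\gamma\notin\Delta^+$, which gives $\beta(E_\text{gr})=1$ after relabelling $\beta,\gamma$), together with the confinement of the deformation to the single root space $\mathfrak{g}_\zeta$, to build a filtration of $\mathfrak{f}_\Omega$ whose associated graded operators are nilpotent. Once nilpotency is established, the remainder is the routine structure theory of simply connected nilpotent Lie groups recalled above.
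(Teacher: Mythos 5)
Your reduction to Theorem \ref{openingnumber} is exactly the paper's route, and that part is sound: $\zeta\in-\Delta^+$ follows as you say (the paper cites Lemma \ref{assume}, arranging $\beta-\gamma\not\in\Delta^+$ by swapping $\beta$ and $\gamma$), the two halves of the Kruglikov-The property follow from $\mathrm{im}(\Omega)\subseteq\mathfrak{g}_\zeta$ together with $\zeta\not\in\{-\beta,-\gamma\}$ in just the way you describe, and $\mathrm{im}(\Omega)\subseteq\mathfrak{g}_\zeta\subseteq\mathfrak{b}_-$ puts you in the scope of Theorem \ref{openingnumber}. The problem is the last step. You correctly identify that the whole difficulty is the structure of $(\mathfrak{f}_\Omega,[\cdot,\cdot]_{\mathfrak{j}_\Omega})$, and you correctly identify the precise danger (``sustained cycles between $\mathfrak{g}_\zeta$ and the input spaces under iterated bracketing''), but you then leave it unresolved: ``build a filtration whose associated graded operators are nilpotent'' is the statement of the problem, not a proof. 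Worse, you are aiming at the wrong target. Nilpotency of the deformed bracket is stronger than what is needed and is genuinely delicate -- the lower central series can keep re-injecting $\mathfrak{g}_\zeta$ via $\Omega(\mathfrak{g}_{-\beta}\wedge\mathfrak{g}_{-\gamma})$ whenever $\mathfrak{g}_{-\gamma}$ reappears in an iterated bracket, which your square-zero observation about $\Omega(X\wedge\cdot)$ does not rule out. Also, your fallback of extracting only $\beta(E_\text{gr})=1$ from Lemma \ref{assume} is too weak: when $\zeta(E_\text{gr})=0$, the bracket $[\mathfrak{g}_{-1},\mathfrak{g}_\zeta]$ lands back in $\mathfrak{g}_{-1}$, which pairs nontrivially with $\mathfrak{g}_1$ under $\kgf$, so the grading alone does not make $(\eta_{\beta,k})_\kgf$ vanish on the derived algebra.

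The paper's resolution is to prove only \emph{solvability} of $\mathfrak{f}_\Omega$, which already suffices: $F_\Omega/N_\Omega$ is then a quotient of a simply connected solvable Lie group by a connected closed subgroup, hence diffeomorphic to $\mathbb{R}^{\dim(\mathfrak{g}_-)}$. The solvability argument hinges on the full strength of Lemma \ref{assume}, namely that $\beta$ is a \emph{simple} restricted root: for any $Y=Y'+Z$ with $Y'\in[\mathfrak{g}_-+\mathrm{im}(\Omega),\mathfrak{g}_-+\mathrm{im}(\Omega)]$ and $Z\in\mathrm{im}(\Omega)$, one has $\kgf(\eta_{\beta,k},Y')=0$ because $Y'$ lies in root spaces for roots that are sums of at least two negative roots while $-\beta$ is the negative of a simple root, and $\kgf(\eta_{\beta,k},Z)=0$ because $\zeta\neq-\beta$. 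Hence $(\eta_{\beta,k})_\kgf$ annihilates $D^1_\Omega(\mathfrak{f}_\Omega)$, so $\Omega$ vanishes on $\Lambda^2 D^1_\Omega(\mathfrak{f}_\Omega)$, the deformed bracket coincides with the ordinary one from the first derived subalgebra onward, and the derived series terminates by nilpotency of $\mathfrak{b}_-$. You should replace your nilpotency program with this argument (or supply an actual proof that rules out the cycles you flagged); as written, the proposal does not establish the diffeomorphism type of $J_\Omega/K_\Omega$.
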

\begin{proof}From Lemma \ref{assume}, $\zeta\in\Delta\setminus\Delta^+(\mathfrak{p}_+)$ implies $\zeta\in-\Delta^+$ and, by swapping $\beta$ and $\gamma$ if necessary, we may assume $\beta$ is a simple restricted root. Since $\zeta\not\in\{-\beta,-\gamma\}$, $\mathrm{im}(\Omega\wedge\mathrm{id})\subseteq\ker(\Omega)$, and since $\zeta\in -\Delta^+$ and $\Omega$ is a lowest weight vector, either $\mathfrak{g}_\zeta\subseteq\mathfrak{g}_-$ or $\mathfrak{g}_\zeta\subseteq\mathfrak{k}_\Omega$. Thus, $\Omega$ satisfies the Kruglikov-The property, and because $\zeta\in -\Delta^+$, we have $\mathrm{im}(\Omega)\subseteq\mathfrak{g}_\zeta\subseteq\mathfrak{b}_-$, so $\Omega$ is a harmonic seed by Theorem \ref{openingnumber}.

It just remains to show that $J_\Omega/K_\Omega$ is diffeomorphic to $\mathbb{R}^{\dim(\mathfrak{g}_-)}$. To do this, we will show that the ideal $\mathfrak{f}_\Omega\leq\mathfrak{j}_\Omega$ from the proof of Theorem \ref{openingnumber} is solvable; then, $J_\Omega/K_\Omega\cong F_\Omega/N_\Omega$ is the quotient of a simply connected solvable Lie group by a connected closed subgroup, hence it must be diffeomorphic to $\mathbb{R}^{\dim(F_\Omega/N_\Omega)}=\mathbb{R}^{\dim(\mathfrak{g}_-)}$.

Let us recursively define $D_\Omega^i(\mathfrak{f}_\Omega)$ by $D_\Omega^0(\mathfrak{f}_\Omega):=\mathfrak{f}_\Omega$ and \[D_\Omega^{i+1}(\mathfrak{f}_\Omega):=[D_\Omega^i(\mathfrak{f}_\Omega),D_\Omega^i(\mathfrak{f}_\Omega)]_{\mathfrak{j}_\Omega}.\] To show that $D_\Omega^i(\mathfrak{j}_\Omega)=\{0\}$ for large enough $i$, which is what we want to prove, it is sufficient to show that $\kgf(\eta_{\beta,k},Y)=0$ for each $k$ and every $Y\in D_\Omega^1(\mathfrak{f}_\Omega)$, since $D_\Omega^i(\mathfrak{f}_\Omega)\subseteq D_\Omega^1(\mathfrak{f}_\Omega)$ for every $i>0$ and hence \[D_\Omega^{i+1}(\mathfrak{f}_\Omega)=[D_\Omega^i(\mathfrak{f}_\Omega),D_\Omega^i(\mathfrak{f}_\Omega)]_{\mathfrak{j}_\Omega}=[D_\Omega^i(\mathfrak{f}_\Omega),D_\Omega^i(\mathfrak{f}_\Omega)],\] which then must be $\{0\}$ for sufficiently large $i$ because $\mathfrak{g}_-+\mathrm{im}(\Omega)$ is nilpotent as a subalgebra of $\mathfrak{g}$.

Suppose $Y\in D_\Omega^1(\mathfrak{f}_\Omega)$. Then, since $D_\Omega^1(\mathfrak{f}_\Omega)\subseteq [\mathfrak{f}_\Omega,\mathfrak{f}_\Omega]+\mathrm{im}(\Omega)$, we can write $Y$ in the form $Y=Y'+Z$ for some $Y'\in[\mathfrak{g}_-+\mathrm{im}(\Omega),\mathfrak{g}_-+\mathrm{im}(\Omega)]$ and $Z\in\mathrm{im}(\Omega)$. But $\beta$ is simple, so since $\mathfrak{g}_-+\mathrm{im}(\Omega)\subseteq\mathfrak{b}_-$, $\kgf(\eta_{\beta,k},Y')=0$, and since $\zeta\not\in\{-\beta,-\gamma\}$, $\kgf(\eta_{\beta,k},Z)=0$ as well. Thus, $\kgf(\eta_{\beta,k},Y)=0$, so $\mathfrak{f}_\Omega$ is solvable.\qed\end{proof}

In general, we suspect that $\zeta\not\in\{-\beta,-\gamma\}$ always holds for lowest weight vectors of the form $\Omega=\sum_k(\eta_{\beta,k})_\kgf\wedge(\eta_{\gamma,k})_\kgf\otimes\eta_{\zeta,k}$. Indeed, when $G$ is split-real and simple of rank greater than 2, this is the case.

\begin{corollary}\label{seedcheck} If $G$ is split-real and simple of rank greater than $2$, then every lowest weight vector of the form $\Omega=\sum_k(\eta_{\beta,k})_\kgf\wedge(\eta_{\gamma,k})_\kgf\otimes\eta_{\zeta,k}\in\ker(\square)_+$ is a harmonic seed with $J_\Omega/K_\Omega$ diffeomorphic to $\mathbb{R}^{\dim(\mathfrak{g}_-)}$.\end{corollary}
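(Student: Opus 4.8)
The plan is to reduce everything to Theorem \ref{main}: once I know that the three restricted roots appearing in $\Omega$ satisfy both $\zeta\in\Delta\setminus\Delta^+(\mathfrak{p}_+)$ and $\zeta\notin\{-\beta,-\gamma\}$, Theorem \ref{main} immediately yields that $\Omega$ is a harmonic seed with $J_\Omega/K_\Omega$ diffeomorphic to $\mathbb{R}^{\dim(\mathfrak{g}_-)}$. So the entire content is to verify these two facts for split-real simple $G$ of rank greater than $2$, and that is where I would concentrate.

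First I would make the roots explicit. Since $G$ is split-real and simple, the restricted root system is the reduced root system of $\mathfrak{g}$ and every restricted root space is one-dimensional, so the sum defining $\Omega$ collapses to a single term; by Kostant's version of the Bott-Borel-Weil theorem recalled before Lemma \ref{assume}, there are simple roots $\alpha_i,\alpha_j$ with $\beta=\alpha_i$, $\gamma=s_{\alpha_i}(\alpha_j)$, and $\zeta=-s_{\alpha_i}s_{\alpha_j}(\mu)$, where $\mu$ is the highest root. Moreover $i\neq j$: since $s_{\alpha_i}$ sends every positive root other than $\alpha_i$ to a positive root and $\gamma=s_{\alpha_i}(\alpha_j)\in\Delta^+(\mathfrak{p}_+)\subseteq\Delta^+$ is positive, the case $i=j$ (which would give $\gamma=-\alpha_i<0$) is excluded.

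The crucial step, and the one genuinely using rank greater than $2$, is to show $\zeta\in-\Delta^+$. I would start from the fact that the highest root $\mu$ of a simple Lie algebra has full support, i.e. its expansion in simple roots has every coefficient at least $1$; for rank greater than $2$ this support has at least three elements. Now $s_{\alpha_j}(\mu)$ is a positive root, since $\mu$ is positive and $\mu\neq\alpha_j$, and it cannot equal $\alpha_i$: the equation $s_{\alpha_j}(\mu)=\alpha_i$ would force $\mu=s_{\alpha_j}(\alpha_i)=\alpha_i-\langle\alpha_i,\alpha_j^\vee\rangle\alpha_j$, which is supported on at most the two simple roots $\alpha_i,\alpha_j$, contradicting full support in rank greater than $2$. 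Since $s_{\alpha_i}$ preserves positivity of every positive root other than $\alpha_i$, it follows that $s_{\alpha_i}s_{\alpha_j}(\mu)$ is positive, whence $\zeta=-s_{\alpha_i}s_{\alpha_j}(\mu)\in-\Delta^+\subseteq\Delta\setminus\Delta^+(\mathfrak{p}_+)$; in particular the homogeneity hypothesis of Theorem \ref{main} holds. This is exactly the step that breaks in rank $2$, where $\mu$ has only two simple roots in its support and $s_{\alpha_j}(\mu)=\alpha_i$ can occur, producing a $\zeta$ of positive homogeneity for which $\Omega$ is not even a candidate harmonic seed.

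It then remains to rule out $\zeta\in\{-\beta,-\gamma\}$, which is elementary given the previous computation. If $\zeta=-\gamma$, then $s_{\alpha_i}s_{\alpha_j}(\mu)=s_{\alpha_i}(\alpha_j)$, so $s_{\alpha_j}(\mu)=\alpha_j$ and hence $\mu=-\alpha_j$, contradicting that $\mu$ is positive. If $\zeta=-\beta$, then $s_{\alpha_i}s_{\alpha_j}(\mu)=\alpha_i$, so $s_{\alpha_j}(\mu)=-\alpha_i<0$, contradicting that $s_{\alpha_j}(\mu)$ is positive. Thus $\zeta\notin\{-\beta,-\gamma\}$, and applying Theorem \ref{main} finishes the argument. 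I expect the only delicate point to be the clean identification of $\beta,\gamma,\zeta$ from the Bott-Borel-Weil data together with the full-support property of the highest root; once those are in hand, the three Weyl-group computations are immediate.
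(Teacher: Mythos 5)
Your argument is correct, and it reaches the two hypotheses of Theorem \ref{main} by a genuinely different route than the paper at both key steps. The paper's proof, after making the same reduction to a single term $\Omega=(\eta_\beta)_\kgf\wedge(\eta_\gamma)_\kgf\otimes\eta_\zeta$ via one-dimensionality of the root spaces, obtains $\zeta\in-\Delta^+$ by citing Lemma 4.1.2 of \cite{GapPhenomenon}, and then obtains $\zeta\not\in\{-\beta,-\gamma\}$ from normality: since $\Omega\in\ker(\square)_+\subseteq\ker(\partial^*)$, expanding $\partial^*\Omega=0$ and using that the three resulting terms are independent forces $[\eta_\beta,\eta_\gamma]=[\eta_\beta,\eta_\zeta]=[\eta_\gamma,\eta_\zeta]=0$, so that $\beta+\gamma$, $\beta+\zeta$, and $\gamma+\zeta$ are neither restricted roots nor zero. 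You instead work directly from the Bott--Borel--Weil parametrization $\beta=\alpha_i$, $\gamma=s_{\alpha_i}(\alpha_j)$, $\zeta=-s_{\alpha_i}s_{\alpha_j}(\mu)$ and deduce everything from Weyl-group combinatorics together with the full support of the highest root. Your route is self-contained where the paper outsources the negativity of $\zeta$ to Kruglikov--The, and it isolates exactly where rank greater than $2$ enters (the support of $\mu$ meets at least three simple roots, so $s_{\alpha_j}(\mu)\neq\alpha_i$); the paper's route buys slightly more with less case analysis, since the codifferential argument shows not merely $\zeta\neq-\beta,-\gamma$ but that $\beta+\zeta$ and $\gamma+\zeta$ fail to be roots at all, and it does not depend on the precise Kostant parametrization of the lowest weight vector. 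Both are valid proofs of the corollary.
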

\begin{proof}Because $G$ is split-real, all of the root spaces are 1-dimensional, so if $\Omega=\sum_k(\eta_{\beta,k})_\kgf\wedge(\eta_{\gamma,k})_\kgf\otimes\eta_{\zeta,k}$, then $\Omega$ must actually be of the form \[\Omega=(\eta_\beta)_\kgf\wedge(\eta_\gamma)_\kgf\otimes\eta_\zeta.\] Moreover, for such lowest weight vectors, it is shown in Lemma 4.1.2 of \cite{GapPhenomenon} that $\zeta\not\in -\Delta^+$ only happens for rank $2$ and $1$, so $\zeta\in -\Delta^+$.

Because each root space is $1$-dimensional and \begin{align*}0 & =\partial^*((\eta_\beta)_\kgf\wedge(\eta_\gamma)_\kgf\otimes\eta_\zeta) \\ & =(\eta_\beta)_\kgf\otimes[\eta_\gamma,\eta_\zeta]-(\eta_\gamma)_\kgf\otimes[\eta_\beta,\eta_\zeta]-[\eta_\beta,\eta_\gamma]_\kgf\otimes\eta_\zeta,\end{align*} $\beta+\gamma$, $\beta+\zeta$, and $\gamma+\zeta$ cannot be roots, nor can they be $0$. In particular, $\zeta\not\in\{-\beta,-\gamma\}$, so $\Omega$ is a harmonic seed with $J_\Omega/K_\Omega$ diffeomorphic to $\mathbb{R}^{\dim(\mathfrak{g}_-)}$ by Theorem \ref{main}.\qed\end{proof}

In particular, this gives us non-flat regular normal homogeneous Cartan geometries of type $(G,P)$ for every Yamaguchi-nonrigid parabolic model geometry $(G,P)$ with $G$ split-real and simple of rank greater than $2$.

\section{Essential automorphisms on non-flat geometries}\label{spotlight}
\subsection{Outline}
For a given parabolic model geometry $(G,P)$ with compatible restricted root system $(\theta,\mathfrak{c},\mathfrak{a},\Delta,\Delta^+)$, we would like to construct a regular normal Cartan geometry $(\mathscr{G},\omega)$ of type $(G,P)$ on a (closed) manifold $M$ such that $(\mathscr{G},\omega)$ is not flat but still admits an essential automorphism. For this purpose, Proposition \ref{esscheck} gives a useful way of checking whether a given automorphism $\varphi\in\Aut(\mathscr{G},\omega)$ is essential when $\varphi$ has a fixed point on the base manifold, since if $\varphi(\mathscr{e})=\mathscr{e}a$ for some $\mathscr{e}\in\mathscr{G}$ and $a\in G_0$, then all we need to do is prove that $a\not\in\ker(\lambda)$ for every $\lambda$ such that $\lambda_*^\kgf$ is a scaling element.

Therefore, to start, we will take the curvature tree $(J_\Omega\times_{K_\Omega}P,\MC{\Omega})$ grown from a harmonic seed of the form $\Omega=\sum_k(\eta_{\beta,k})_{\kgf}\wedge(\eta_{\gamma,k})_\kgf\otimes\eta_{\zeta,k}\in\ker(\square)_+$ as in Theorem \ref{main}, and apply Proposition \ref{esscheck} to show that (under mild assumptions) there is a one-parameter subgroup $a_\mathbb{R}<J_\Omega$ of essential automorphisms fixing a point $o\in J_\Omega/K_\Omega$, which we might think of as ``the origin". However, $J_\Omega/K_\Omega\cong G_-\cong\mathbb{R}^{\dim(\mathfrak{g}_-)}$ is not compact, so we still need to modify this somehow to get a closed base manifold.

To this end, suppose $a_\mathbb{R}$ also fixes a point $u_0\in(J_\Omega/K_\Omega)\setminus\{o\}$. We will consider another one-parameter subgroup $c_\mathbb{R}<J_\Omega$ that both commutes with $a_\mathbb{R}$ and shrinks every point of $J_\Omega/K_\Omega$ to $o$, meaning that $c_t(u)\rightarrow o$ as $t\rightarrow+\infty$ for every $u\in J_\Omega/K_\Omega$. Removing the point $o\in J_\Omega/K_\Omega$ and then quotienting by the subgroup $c_{t_0\mathbb{Z}}$ for $t_0\neq 0$ gives a non-flat regular normal Cartan geometry of type $(G,P)$ over $c_{t_0\mathbb{Z}}\backslash((J_\Omega/K_\Omega)\setminus\{o\})\cong S^1\times S^{\dim(\mathfrak{g}_-)-1}$, which is closed, and $a_\mathbb{R}$ descends to a one-parameter subgroup of automorphisms of this new geometry because it commutes with $c_\mathbb{R}$, so we can apply Proposition \ref{esscheck} using a point over the image of $u_0$ in the quotient to see that $a_\mathbb{R}$ is still essential.

\begin{figure}
\centering\includegraphics[width=0.4\textwidth]{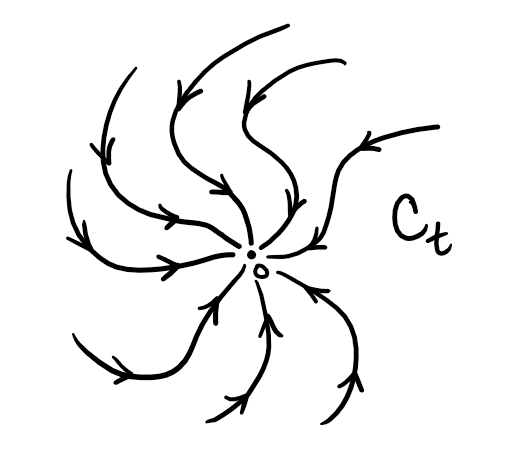}
\includegraphics[width=0.4\textwidth]{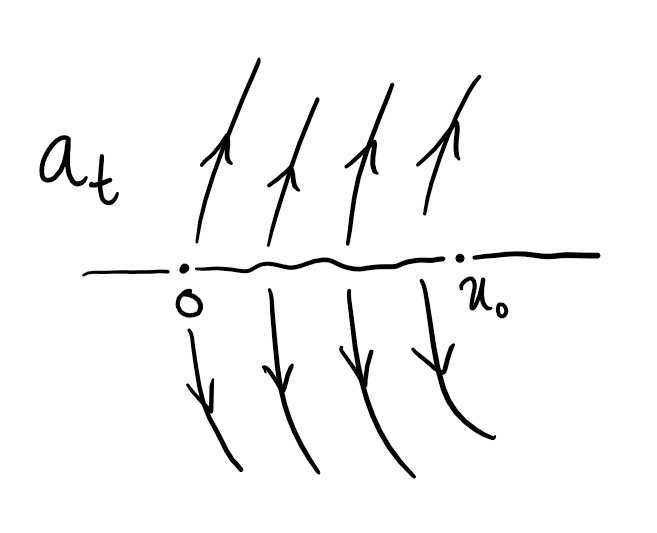}
\caption{$c_t$ shrinks everything toward $o$, while $a_t$ is essential and fixes both $o$ and $u_0$}
\label{flowpicture}
\end{figure}

Finally, we will show that the geometries over $c_{t_0\mathbb{Z}}\backslash((J_\Omega/K_\Omega)\setminus\{o\})$ are not isomorphic for distinct choices of $t_0>0$, so this gives an infinite family of distinct non-flat Cartan geometries over a closed base manifold with essential automorphisms.

\subsection{Essential automorphisms}
Let us fix a lowest weight vector $\Omega=\sum_k(\eta_{\beta,k})_{\kgf}\wedge(\eta_{\gamma,k})_\kgf\otimes\eta_{\zeta,k}\in\ker(\square)_+$, where $\beta,\gamma\in\Delta^+(\mathfrak{p}_+)$ and $\zeta\in\Delta\setminus(\Delta^+(\mathfrak{p}_+)\cup\{-\beta,-\gamma\})$ so that $\Omega$ is a harmonic seed with $J_\Omega/K_\Omega\cong\mathbb{R}^{\dim(\mathfrak{g}_-)}$ by Theorem \ref{main}, and a point $o:=eK_\Omega\in J_\Omega/K_\Omega$. The subgroup of $J_\Omega\simeq\Aut(J_\Omega\times_{K_\Omega}P,\omega)$ that fixes $o$ is precisely $K_\Omega$, which we identify with $\mathrm{Stab}_{G_0}(\Omega)<G_0<P$.

Suppose $\dot{a}_0\in\mathfrak{a}$. Then, our choice of $\Omega$ tells us that \begin{align*}\dot{a}_0\cdot\Omega & =\dot{a}_0\cdot\sum_k(\eta_{\beta,k})_{\kgf}\wedge(\eta_{\gamma,k})_\kgf\otimes\eta_{\zeta,k} \\ & =\sum_k\Big((\ad_{\dot{a}_0}\eta_{\beta,k})_{\kgf}\wedge(\eta_{\gamma,k})_\kgf\otimes\eta_{\zeta,k} \\ & \hspace{5em} +(\eta_{\beta,k})_{\kgf}\wedge(\ad_{\dot{a}_0}\eta_{\gamma,k})_\kgf\otimes\eta_{\zeta,k} \\ & \hspace{5em} +(\eta_{\beta,k})_{\kgf}\wedge(\eta_{\gamma,k})_\kgf\otimes\ad_{\dot{a}_0}\eta_{\zeta,k}\Big) \\ & =(\beta(\dot{a}_0)+\gamma(\dot{a}_0)+\zeta(\dot{a}_0))\Omega,\end{align*} so $\mathfrak{a}\cap\mathfrak{k}_\Omega=\ker(\beta+\gamma+\zeta)$. Therefore, if we can find $\dot{a}_0\in\ker(\beta+\gamma+\zeta)$ such that $\lambda_*(\dot{a}_0)\neq 0$ for each $\lambda:G_0\to\mathbb{R}_+$ with $\lambda_*^\kgf$ a scaling element, then for $a_t=\exp(t\dot{a}_0)$, $\lambda(a_t)\neq 1$ for each $t\neq 0$ since $\lambda(\exp(t\dot{a}_0))=e^{t\lambda_*(\dot{a}_0)}$, so by Proposition \ref{esscheck}, $a_t$ is an essential automorphism.

If $Z\in\mathfrak{z}(\mathfrak{g}_0)$ is a scaling element, then by definition it acts by multiplication by a nonzero real scalar on each $G_0$-irreducible component of $\mathfrak{p}_+$, so we must have $Z\in\mathfrak{z}(\mathfrak{g}_0)\cap\mathfrak{a}$ and, for $\alpha\in\Delta^+(\mathfrak{p}_+)$, the real scalar by which it acts on the $G_0$-irreducible component containing $\mathfrak{g}_\alpha$ is $\alpha(Z)$. Thus, the set of scaling elements is precisely \[\left\{Z\in\mathfrak{z}(\mathfrak{g}_0)\cap\mathfrak{a}:\alpha(Z)\neq 0\text{ for each }\alpha\in\Delta^+(\mathfrak{p}_+)\right\},\] and if $\lambda_*^\kgf$ is a scaling element, then \[\alpha(\lambda_*^\kgf)=\kgf(\alpha^\kgf,\lambda_*^\kgf)=\lambda_*(\alpha^\kgf)\neq 0\] for each $\alpha\in\Delta^+(\mathfrak{p}_+)$. In particular, if we can find $\alpha\in\Delta^+(\mathfrak{p}_+)$ and \[R\in\mathfrak{g}_0^\text{ss}\cap\mathfrak{a}=\{R\in\mathfrak{a}:\kgf(R,Z)=0\text{ for all }Z\in\mathfrak{z}(\mathfrak{g}_0)\cap\mathfrak{a}\}\] such that $\dot{a}_0=\alpha^\kgf+R\in\ker(\beta+\gamma+\zeta)$, then each $a_t=\exp(t\dot{a}_0)$ with $t\neq 0$ is an essential automorphism, since $\lambda(\exp(t\dot{a}_0))=e^{t\lambda_*(\dot{a}_0)}$ and $\lambda_*(\dot{a}_0)=\lambda_*(\alpha^\kgf)\neq 0$. Of course, no such $\dot{a}_0$ can exist when $(\beta+\gamma+\zeta)^\kgf$ is itself a scaling element, since then $(\beta+\gamma+\zeta)(\alpha^\kgf+R)=\alpha((\beta+\gamma+\zeta)^\kgf)\neq 0$, but this is the only obstacle.

\begin{proposition}\label{scalobstr} Suppose $(\beta+\gamma+\zeta)^\kgf$ is not a scaling element. Then, there exist $\alpha\in\Delta^+(\mathfrak{p}_+)$ and $R\in\mathfrak{g}_0^\text{ss}\cap\mathfrak{a}$ such that $\alpha^\kgf+R\in\ker(\beta+\gamma+\zeta)$.\end{proposition}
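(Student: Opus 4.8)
The plan is to translate everything into the Euclidean vector space $(\mathfrak{a},\kgf)$ and then dispatch the statement with a short case analysis. Writing $\xi:=\beta+\gamma+\zeta\in\mathfrak{a}^\vee$, the element we seek, $\alpha^\kgf+R$, lies in $\mathfrak{a}$, and the requirement $\alpha^\kgf+R\in\ker(\beta+\gamma+\zeta)$ becomes, under the $\kgf$-identification of $\mathfrak{a}^\vee$ with $\mathfrak{a}$, exactly $\kgf(\xi^\kgf,\alpha^\kgf+R)=0$. First I would record two facts already available here: that $\kgf$ is positive-definite on $\mathfrak{a}$ (since $\kgf_\theta$ is negative-definite and $\theta|_\mathfrak{a}=-\mathrm{id}_\mathfrak{a}$), and that, as noted just before the statement, $\mathfrak{a}$ splits $\kgf$-orthogonally as $(\mathfrak{z}(\mathfrak{g}_0)\cap\mathfrak{a})\oplus(\mathfrak{g}_0^\text{ss}\cap\mathfrak{a})$. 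Accordingly I decompose $\xi^\kgf=\xi^\kgf_z+\xi^\kgf_\text{ss}$ along this splitting.

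Next I would observe that the hypothesis ``$\xi^\kgf$ is not a scaling element'' breaks into exactly two cases, using the description of the scaling elements as $\{Z\in\mathfrak{z}(\mathfrak{g}_0)\cap\mathfrak{a}:\alpha(Z)\neq 0\text{ for each }\alpha\in\Delta^+(\mathfrak{p}_+)\}$: either (i) $\xi^\kgf_\text{ss}\neq 0$, so that $\xi^\kgf\not\in\mathfrak{z}(\mathfrak{g}_0)\cap\mathfrak{a}$, or (ii) $\xi^\kgf\in\mathfrak{z}(\mathfrak{g}_0)\cap\mathfrak{a}$ but $\alpha(\xi^\kgf)=0$ for some $\alpha\in\Delta^+(\mathfrak{p}_+)$. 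Checking that these two cases are jointly exhaustive is the one bookkeeping point to get right, but it follows immediately from the displayed characterization of scaling elements.

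In case (ii) I would simply take that $\alpha$ with $\alpha(\xi^\kgf)=0$ together with $R=0$; since $\xi^\kgf$ is central, $\kgf(\xi^\kgf,\alpha^\kgf)=\alpha(\xi^\kgf)=0$, so $\alpha^\kgf\in\ker(\beta+\gamma+\zeta)$. In case (i) I would pick $\alpha\in\Delta^+(\mathfrak{p}_+)$ arbitrarily (e.g.\ $\alpha=\beta$) and solve for $R$: because $R\in\mathfrak{g}_0^\text{ss}\cap\mathfrak{a}$ is $\kgf$-orthogonal to $\xi^\kgf_z$, the condition reduces to $\kgf(\xi^\kgf_\text{ss},R)=-\kgf(\xi^\kgf,\alpha^\kgf)$, and positive-definiteness of $\kgf$ on $\mathfrak{g}_0^\text{ss}\cap\mathfrak{a}$ makes $R\mapsto\kgf(\xi^\kgf_\text{ss},R)$ a nonzero (hence surjective) linear functional when $\xi^\kgf_\text{ss}\neq 0$, so a suitable $R$ exists.

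The argument is genuinely short, and I do not expect a serious obstacle once the problem is posed inside $(\mathfrak{a},\kgf)$; the only thing requiring care is the dichotomy in the second paragraph. It is worth emphasizing \emph{why} the two cases are handled differently: case (i) is where the semisimple directions $R$ are free to absorb the pairing $\kgf(\xi^\kgf,\alpha^\kgf)$, whereas in case (ii) those directions are powerless against the central part of $\xi^\kgf$ and one instead exploits the vanishing of some root value $\alpha(\xi^\kgf)$, which is precisely what the failure of the scaling condition supplies.
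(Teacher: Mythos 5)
Your proposal is correct and follows essentially the same route as the paper's proof: the same dichotomy on whether $(\beta+\gamma+\zeta)^\kgf$ lies in $\mathfrak{z}(\mathfrak{g}_0)\cap\mathfrak{a}$, with the central case handled by the vanishing of some $\alpha((\beta+\gamma+\zeta)^\kgf)$ and $R=0$, and the non-central case by solving one linear equation in the $\mathfrak{g}_0^\text{ss}\cap\mathfrak{a}$ direction. The only cosmetic difference is that the paper exhibits $R$ explicitly as a multiple of $\mathrm{pr}_{\mathfrak{g}_0^\text{ss}\cap\mathfrak{a}}(\beta+\gamma+\zeta)^\kgf$, whereas you argue existence via surjectivity of the nonzero functional $R\mapsto\kgf(\xi^\kgf_\text{ss},R)$ --- the same $R$ in substance.
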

\begin{proof}Suppose $(\beta+\gamma+\zeta)^\kgf$ is not a scaling element. If $(\beta+\gamma+\zeta)^\kgf\in\mathfrak{z}(\mathfrak{g}_0)\cap\mathfrak{a}$, then there is some $\alpha\in\Delta^+(\mathfrak{p}_+)$ such that $\alpha((\beta+\gamma+\zeta)^\kgf)=(\beta+\gamma+\zeta)(\alpha^\kgf)=0$, so $\alpha^\kgf+0\in\ker(\beta+\gamma+\zeta)$. If $(\beta+\gamma+\zeta)^\kgf\not\in\mathfrak{z}(\mathfrak{g}_0)\cap\mathfrak{a}$, then consider the (nonzero) $\kgf$-orthogonal projection $\mathrm{pr}_{\mathfrak{g}_0^\text{ss}\cap\mathfrak{a}}(\beta+\gamma+\zeta)^\kgf\in\mathfrak{g}_0^\text{ss}\cap\mathfrak{a}$. Then, 
for arbitrary $\alpha\in\Delta^+(\mathfrak{p}_+)$, \[R=\frac{-\kgf(\beta+\gamma+\zeta,\alpha)}{(\beta+\gamma+\zeta)\left(\mathrm{pr}_{\mathfrak{g}_0^\text{ss}\cap\mathfrak{a}}(\beta+\gamma+\zeta)^\kgf\right)}\mathrm{pr}_{\mathfrak{g}_0^\text{ss}\cap\mathfrak{a}}(\beta+\gamma+\zeta)^\kgf\] satisfies $\alpha^\kgf+R\in\ker(\beta+\gamma+\zeta)$.\qed\end{proof}


In particular, if $(\beta+\gamma+\zeta)^\kgf$ is not a scaling element, then the $\dot{a}_0=\alpha^\kgf+R$ constructed in Proposition \ref{scalobstr} exponentiates to give essential automorphisms of the curvature tree grown from $\Omega$.

\subsection{Compact quotients}
Now, we want to find an $a_\mathbb{R}\leq K_\Omega$, possibly different from the one constructed in Proposition \ref{scalobstr}, and a point $u_0\in(J_\Omega/K_\Omega)\setminus\{o\}$ such that $a_t(u_0)=u_0$, so that we can still apply Proposition \ref{esscheck} after removing $o$. To do this, suppose we have already chosen $\dot{a}_0\in\ker(\beta+\gamma+\zeta)$ and that there is some $\nu_0\in\Delta^+(\mathfrak{p}_+)$ such that $\nu_0(\dot{a}_0)=0$. Then, for each $X\in\mathfrak{g}_{-\nu_0}$, where $\mathfrak{g}_{-\nu_0}$ is thought of as a subspace of $\mathfrak{j}_\Omega$, \begin{align*}a_t\exp(X) & =a_t\exp(X)a_t^{-1}a_t=\exp(\Ad_{a_t}X)a_t \\ & =\exp(e^{-t\nu_0(\dot{a}_0)}X)a_t=\exp(X)a_t\in\exp(X)P,\end{align*} so $u_0=\exp(X)K_\Omega\in J_\Omega/K_\Omega$ is another point fixed by $a_t$. Thus, following the construction in the previous section, we can construct a one-parameter subgroup $a_\mathbb{R}$ of essential automorphisms for the curvature tree grown from $\Omega$ that fix both $o$ and a point $u_0\in (J_\Omega/K_\Omega)\setminus\{o\}$ by finding a pair of restricted roots $\alpha,\nu_0\in\Delta^+(\mathfrak{p}_+)$ and an element $R\in\mathfrak{g}_0^\text{ss}\cap\mathfrak{a}$ such that \[\alpha^\kgf+R\in\ker(\beta+\gamma+\zeta)\cap\ker(\nu_0)\] and then defining $\dot{a}_0=\alpha^\kgf+R$.

To get compact quotients, we also need a one-parameter subgroup $c_\mathbb{R}<J_\Omega$ that both commutes with $a_\mathbb{R}$ and shrinks every point of $J_\Omega/K_\Omega$ to $o$. This has a much more straightforward solution than finding an essential automorphism: find $\dot{c}_0\in\ker(\beta+\gamma+\zeta)$ such that $\nu(\dot{c}_0)>0$ for all $\nu\in\Delta^+(\mathfrak{p}_+)$. Then, for every point $\exp(X_1)\cdots\exp(X_\ell)K_\Omega\in J_\Omega/K_\Omega$, with each $X_i\in\mathfrak{g}_{-\nu_i}<\mathfrak{j}_\Omega$ for some $\nu_i\in\Delta^+(\mathfrak{p}_+)$, we get \begin{align*}c_t\exp(X_1)\cdots\exp(X_\ell) & =c_t\exp(X_1)\cdots\exp(X_\ell)c_t^{-1}c_t \\ & =\exp(\Ad_{c_t}X_1)\cdots\exp(\Ad_{c_t}X_\ell)c_t \\ & =\exp(e^{-t\nu_1(\dot{c}_0)}X_1)\cdots\exp(e^{-t\nu_\ell(\dot{c}_0)}X_\ell)c_t,\end{align*} so as $t\rightarrow+\infty$, $c_t\exp(X_1)\cdots\exp(X_\ell)K_\Omega\rightarrow o$, and $\ker(\beta+\gamma+\zeta)<\mathfrak{a}$ is an abelian subalgebra, so $c_\mathbb{R}$ commutes with $a_\mathbb{R}$. 

Condensing the above discussion into a single result, we get the following.

\begin{theorem}\label{stumps} Suppose $\Omega=\sum_k(\eta_{\beta,k})_{\kgf}\wedge(\eta_{\gamma,k})_\kgf\otimes\eta_{\zeta,k}$ is a harmonic seed as above and $\dot{a}_0,\dot{c}_0\in\ker(\beta+\gamma+\zeta)<\mathfrak{a}$ such that $\nu(\dot{c}_0)>0$ for all $\nu\in\Delta^+(\mathfrak{p}_+)$ and $\dot{a}_0=\alpha^\kgf+R\in\ker(\nu_0)$ for some $\alpha,\nu_0\in\Delta^+(\mathfrak{p}_+)$ and $R\in\mathfrak{g}_0^\text{ss}\cap\mathfrak{a}$. Then, for each $t_0>0$, $a_\mathbb{R}$ descends to a one-parameter family of essential automorphisms of the Cartan geometry of type $(G,P)$ on $c_{t_0\mathbb{Z}}\backslash((J_\Omega\times_{K_\Omega}P)\setminus\{q_{{}_P}^{-1}(o)\})$ over $c_{t_0\mathbb{Z}}\backslash((J_\Omega/K_\Omega)\setminus\{o\})\cong S^1\times S^{\dim(\mathfrak{g}_-)-1}$ induced by the curvature tree grown from $\Omega$.\end{theorem}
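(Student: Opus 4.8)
The plan is to realize the asserted geometry as a quotient of the curvature tree grown from $\Omega$ and to transfer essentiality through Proposition~\ref{esscheck}; almost all of the algebraic input has already been assembled in the discussion preceding the statement, so the genuinely new work is topological. To set up, I would record that $\dot a_0,\dot c_0\in\ker(\beta+\gamma+\zeta)=\mathfrak{a}\cap\mathfrak{k}_\Omega$, so that $a_\mathbb{R}$ and $c_\mathbb{R}$ are one-parameter subgroups of $K_\Omega\le J_\Omega\simeq\Aut(J_\Omega\times_{K_\Omega}P,\MC{\Omega})$, both fixing $o=eK_\Omega$, and that they commute because $\mathfrak{a}$ is abelian.

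The heart of the argument is the $c_\mathbb{R}$-action on $(J_\Omega/K_\Omega)\setminus\{o\}\cong\mathbb{R}^{\dim(\mathfrak{g}_-)}\setminus\{0\}$. In the global coordinates $(X_\nu)_{\nu\in\Delta^+(\mathfrak{p}_+)}$ arising from the ordered exponentials $\exp(X_1)\cdots\exp(X_\ell)K_\Omega$, the computation preceding the statement exhibits $c_t$ as the linear diagonal map $X_\nu\mapsto e^{-t\nu(\dot c_0)}X_\nu$, with every $\nu(\dot c_0)>0$. This is a linear contraction, so the action is free: if $c_t u=u$ with $t\ne 0$, then $u=c_{nt}u\to o$ forces $u=o$. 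I would then fix a smooth, positive, proper gauge $\rho$ on the complement of $o$ with $\rho\circ c_t=e^{-2t}\rho$ (a homogeneous gauge for the contraction, built by homogenizing any norm), so that $\Sigma:=\rho^{-1}(1)$ is a compact smooth hypersurface diffeomorphic to $S^{\dim(\mathfrak{g}_-)-1}$ and $u\mapsto(c_{s(u)}u,s(u))$ with $s(u)=\tfrac12\log\rho(u)$ is a diffeomorphism onto $\Sigma\times\mathbb{R}$ intertwining $c_r$ with translation by $-r$. Hence the $\mathbb{R}$-action is proper, $c_{t_0\mathbb{Z}}$ acts freely and properly discontinuously, and the quotient is $\Sigma\times(\mathbb{R}/t_0\mathbb{Z})\cong S^{\dim(\mathfrak{g}_-)-1}\times S^1$.

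Next I would lift this to the bundle. Since $c_{t_0}$ acts by left translation it commutes with the right $P$-action and, being an automorphism, preserves $\MC{\Omega}$; because it fixes $o$ it preserves $q_{{}_P}^{-1}(o)$, so it acts freely and properly discontinuously on $(J_\Omega\times_{K_\Omega}P)\setminus q_{{}_P}^{-1}(o)$, lying over the free proper base action just analyzed. Thus the quotient is a principal $P$-bundle over $S^{\dim(\mathfrak{g}_-)-1}\times S^1$ and $\MC{\Omega}$ descends to a Cartan connection, which remains regular and normal since those are local conditions inherited from the curvature tree. As $a_t$ fixes $o$, preserves $q_{{}_P}^{-1}(o)$, and commutes with $c_{t_0}$, it restricts to the complement and descends to a one-parameter family $\bar a_\mathbb{R}$ of automorphisms of the quotient geometry.

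Finally, for essentiality I would fix a nonzero $X\in\mathfrak{g}_{-\nu_0}$; since $\nu_0(\dot a_0)=0$ we have $\Ad_{a_t}X=X$, so at $\mathscr{e}=[(\exp(X),e)]$ the relation $a_t\cdot\mathscr{e}=\mathscr{e}\cdot i(a_t)$ holds with $i(a_t)=\exp(t\dot a_0)\in G_0$. Because $\mathfrak{g}_{-\nu_0}\cap\mathfrak{k}_\Omega=\{0\}$, the point $u_0=\exp(X)K_\Omega$ differs from $o$, so $\mathscr{e}$ and this relation descend to the quotient. Writing $\dot a_0=\alpha^\kgf+R$ with $\alpha\in\Delta^+(\mathfrak{p}_+)$ and $R\in\mathfrak{g}_0^\text{ss}\cap\mathfrak{a}$, for any scaling homomorphism $\lambda$ we have $\lambda_*(R)=\kgf(\lambda_*^\kgf,R)=0$ since $\lambda_*^\kgf\in\mathfrak{z}(\mathfrak{g}_0)$ is $\kgf$-orthogonal to $\mathfrak{g}_0^\text{ss}$, while $\lambda_*(\alpha^\kgf)=\alpha(\lambda_*^\kgf)\ne 0$, so $\lambda(\exp(t\dot a_0))=e^{t\lambda_*(\dot a_0)}\ne 1$ for $t\ne 0$; thus $\exp(t\dot a_0)\notin\ker(\lambda)$, and Proposition~\ref{esscheck} shows each $\bar a_t$ with $t\ne 0$ is essential. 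I expect the middle step to be the main obstacle: since the flow is only block-diagonal, contracting distinct root spaces at distinct rates rather than by a single scalar, one cannot read the slice off a Euclidean norm and must instead manufacture the homogeneous gauge $\rho$ and verify the product decomposition $\Sigma\times\mathbb{R}$ directly.
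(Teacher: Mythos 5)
Your proposal is correct and follows essentially the same route as the paper, which presents Theorem \ref{stumps} explicitly as a condensation of the preceding discussion: the identification $\mathfrak{a}\cap\mathfrak{k}_\Omega=\ker(\beta+\gamma+\zeta)$, the fixed point $u_0=\exp(X)K_\Omega$ for $X\in\mathfrak{g}_{-\nu_0}$, the contraction property of $c_t$ in the ordered-exponential coordinates, and the application of Proposition \ref{esscheck} via $\lambda_*(\dot{a}_0)=\lambda_*(\alpha^\kgf)\neq 0$ all match. The only genuine addition is your homogeneous-gauge/slice argument establishing properness of the $c_\mathbb{R}$-action and the diffeomorphism of the quotient with $S^1\times S^{\dim(\mathfrak{g}_-)-1}$, a point the paper asserts without detail; your treatment of it is a correct filling-in rather than a divergence.
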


\subsection{Strategies for finding $\dot{a}_0$ and $\dot{c}_0$}\label{acstrat}
Unlike in the noncompact case, where Proposition \ref{scalobstr} guarantees the existence of $\dot{a}_0$ satisfying the desired conditions as long as $(\beta+\gamma+\zeta)^\kgf$ is not a scaling element, Theorem \ref{stumps} assumes the existence of $\dot{a}_0$ and $\dot{c}_0$ without guaranteeing that they exist. This is irritatingly necessary, as we were unable to prove a result directly analogous to Proposition \ref{scalobstr} for $\dot{a}_0$ and $\dot{c}_0$. We \textit{suspect} that they always exist provided $(\beta+\gamma+\zeta)^\kgf$ is not a scaling element and the real rank of $G$ is at least $3$, and in principle, we could verify that this is true by using Yamaguchi's list and checking each and every case, but this would not really help us understand why the result is true.

There are, however, some general strategies for finding $\dot{a}_0$ and $\dot{c}_0$ that, as we will see in Section \ref{examples}, still allow us to apply Theorem \ref{stumps} to many, \textit{many} examples.

Throughout this section, let us assume that $G$ is simple with real rank at least $3$. The condition on the rank is necessary: $\dot{a}_0$ and $\dot{c}_0$ are linearly independent if they exist, so $\ker(\beta+\gamma+\zeta)$ must have dimension at least $2$, and $\ker(\beta+\gamma+\zeta)$ has codimension $1$ in $\mathfrak{a}$, so $\mathfrak{a}$ must have dimension at least $3$.

To start, let us look for valid choices of $\dot{a}_0$. Just as in the noncompact case, the main obstruction comes from the scaling elements. We split this into two cases: when $(\beta+\gamma+\zeta)^\kgf\in\mathfrak{z}(\mathfrak{g}_0)$ but is not a scaling element, and when $(\beta+\gamma+\zeta)^\kgf\not\in\mathfrak{z}(\mathfrak{g}_0)$, in which case it already cannot be a scaling element because the set of scaling elements is contained in $\mathfrak{z}(\mathfrak{g}_0)$.

\begin{proposition}\label{centralnotscaling} Suppose $(\beta+\gamma+\zeta)^\kgf\in\mathfrak{z}(\mathfrak{g}_0)\cap\mathfrak{a}$ is not a scaling element. Then, there exist restricted roots $\alpha,\nu_0\in\Delta^+(\mathfrak{p}_+)$ and $R\in\mathfrak{g}_0^\text{ss}\cap\mathfrak{a}$ such that $\alpha^\kgf+R\in\ker(\beta+\gamma+\zeta)\cap\ker(\nu_0)$.\end{proposition}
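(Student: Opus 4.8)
We need to find $\alpha,\nu_0\in\Delta^+(\mathfrak{p}_+)$ and $R\in\mathfrak{g}_0^{\text{ss}}\cap\mathfrak{a}$ with $\alpha^\kgf+R\in\ker(\beta+\gamma+\zeta)\cap\ker(\nu_0)$. The hypothesis is that $(\beta+\gamma+\zeta)^\kgf\in\mathfrak{z}(\mathfrak{g}_0)\cap\mathfrak{a}$ but is not a scaling element. Since $(\beta+\gamma+\zeta)^\kgf\in\mathfrak{z}(\mathfrak{g}_0)\cap\mathfrak{a}$, the functional $\beta+\gamma+\zeta$ is $\kgf$-orthogonal to all of $\mathfrak{g}_0^{\text{ss}}\cap\mathfrak{a}$, so it is constant on $\Delta^+(\mathfrak{p}_+)$-orbits under the Weyl group of $\mathfrak{g}_0^{\text{ss}}$; more importantly, for any $R\in\mathfrak{g}_0^{\text{ss}}\cap\mathfrak{a}$ we have $(\beta+\gamma+\zeta)(R)=\kgf((\beta+\gamma+\zeta)^\kgf,R)=0$. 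This is the key simplification in this case: the constraint $\alpha^\kgf+R\in\ker(\beta+\gamma+\zeta)$ reduces to $(\beta+\gamma+\zeta)(\alpha^\kgf)=0$, i.e. $\kgf(\beta+\gamma+\zeta,\alpha)=0$, a condition on $\alpha$ alone. So the plan splits cleanly into choosing a root $\alpha$ and then separately arranging the $\nu_0$-condition using the freedom in $R$.

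\textbf{Step 1: choosing $\alpha$.} Since $(\beta+\gamma+\zeta)^\kgf$ is not a scaling element but does lie in $\mathfrak{z}(\mathfrak{g}_0)\cap\mathfrak{a}$, by the explicit description of scaling elements given just before Proposition~\ref{scalobstr} there must be some $\alpha\in\Delta^+(\mathfrak{p}_+)$ with $\alpha((\beta+\gamma+\zeta)^\kgf)=0$, equivalently $\kgf(\beta+\gamma+\zeta,\alpha)=0$. Fix such an $\alpha$. By the reduction above, for \emph{any} $R\in\mathfrak{g}_0^{\text{ss}}\cap\mathfrak{a}$ the element $\alpha^\kgf+R$ automatically lies in $\ker(\beta+\gamma+\zeta)$, because $(\beta+\gamma+\zeta)(\alpha^\kgf)=0$ by choice of $\alpha$ and $(\beta+\gamma+\zeta)(R)=0$ by centrality of $(\beta+\gamma+\zeta)^\kgf$.

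\textbf{Step 2: arranging $\nu_0$ via $R$.} It remains to choose $\nu_0\in\Delta^+(\mathfrak{p}_+)$ and $R\in\mathfrak{g}_0^{\text{ss}}\cap\mathfrak{a}$ so that $\nu_0(\alpha^\kgf+R)=0$, i.e. $\nu_0(R)=-\nu_0(\alpha^\kgf)=-\kgf(\nu_0,\alpha)$. The plan is to pick $\nu_0$ so that this single linear equation in $R$ is solvable within $\mathfrak{g}_0^{\text{ss}}\cap\mathfrak{a}$: concretely, choose $\nu_0$ whose $\kgf$-orthogonal projection onto $\mathfrak{g}_0^{\text{ss}}\cap\mathfrak{a}$ is nonzero, and then set $R$ to be an appropriate scalar multiple of $\mathrm{pr}_{\mathfrak{g}_0^{\text{ss}}\cap\mathfrak{a}}(\nu_0)^\kgf$, exactly as in the projection computation in Proposition~\ref{scalobstr}. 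Since for $R\in\mathfrak{g}_0^{\text{ss}}\cap\mathfrak{a}$ one has $\nu_0(R)=\kgf(\nu_0^\kgf,R)=\kgf(\mathrm{pr}_{\mathfrak{g}_0^{\text{ss}}\cap\mathfrak{a}}(\nu_0)^\kgf,R)$, the equation $\nu_0(R)=-\kgf(\nu_0,\alpha)$ has a solution precisely when the coefficient of $R$ is nonzero, i.e. when $\mathrm{pr}_{\mathfrak{g}_0^{\text{ss}}\cap\mathfrak{a}}(\nu_0)\neq 0$. This is where the rank hypothesis enters: one must exhibit some $\nu_0\in\Delta^+(\mathfrak{p}_+)$ with nonzero $\mathfrak{g}_0^{\text{ss}}$-component, which is guaranteed once $\mathfrak{g}_0^{\text{ss}}\cap\mathfrak{a}$ is nontrivial and the nilradical is not entirely central.

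\textbf{Main obstacle.} The genuinely delicate point is Step~2: I must ensure that a suitable $\nu_0$ exists with $\mathrm{pr}_{\mathfrak{g}_0^{\text{ss}}\cap\mathfrak{a}}(\nu_0)\neq 0$, and that $R$ can be taken in $\mathfrak{g}_0^{\text{ss}}\cap\mathfrak{a}$ rather than spilling into $\mathfrak{z}(\mathfrak{g}_0)$. The expectation is that simplicity of $G$ together with real rank $\geq 3$ forces $\mathfrak{g}_0^{\text{ss}}\cap\mathfrak{a}$ to be large enough and the restricted roots in $\Delta^+(\mathfrak{p}_+)$ to have enough ``spread'' across the simple factors of $\mathfrak{g}_0^{\text{ss}}$ that such a $\nu_0$ is available; if no root projected nontrivially onto $\mathfrak{g}_0^{\text{ss}}\cap\mathfrak{a}$, then all of $\Delta^+(\mathfrak{p}_+)$ would be $\kgf$-orthogonal to $\mathfrak{g}_0^{\text{ss}}\cap\mathfrak{a}$, which is incompatible with $\mathfrak{p}_+$ being a genuine $\mathfrak{g}_0$-module with nontrivial semisimple part in the relevant rank range. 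Verifying this last implication carefully — and handling the edge case where $\alpha$ and the chosen $\nu_0$ interact so that the scalar defining $R$ is forced to vanish — is the crux of the argument, and is exactly where I would invest the most care.
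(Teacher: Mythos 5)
Your Step 1 and the reduction $(\beta+\gamma+\zeta)(R)=0$ for all $R\in\mathfrak{g}_0^\text{ss}\cap\mathfrak{a}$ match the paper exactly, and your Step 2 is essentially the paper's argument in the case $\mathfrak{a}\not\subseteq\mathfrak{z}(\mathfrak{g}_0)$: pick $\nu_0$ with $\mathrm{pr}_{\mathfrak{g}_0^\text{ss}\cap\mathfrak{a}}(\nu_0^\kgf)\neq 0$ and solve the single linear equation $\nu_0(R)=-\kgf(\nu_0,\alpha)$ by taking $R$ a multiple of $\mathrm{pr}_{\mathfrak{g}_0^\text{ss}\cap\mathfrak{a}}(\nu_0^\kgf)$. (Incidentally, the ``edge case'' you flag at the end is a non-issue: the denominator $\nu_0\bigl(\mathrm{pr}_{\mathfrak{g}_0^\text{ss}\cap\mathfrak{a}}(\nu_0^\kgf)\bigr)=\kgf\bigl(\mathrm{pr}_{\mathfrak{g}_0^\text{ss}\cap\mathfrak{a}}(\nu_0^\kgf),\mathrm{pr}_{\mathfrak{g}_0^\text{ss}\cap\mathfrak{a}}(\nu_0^\kgf)\bigr)$ is strictly positive whenever the projection is nonzero, since $\kgf$ is positive definite on $\mathfrak{a}$; and if the numerator $-\kgf(\nu_0,\alpha)$ vanishes you simply take $R=0$.)

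The genuine gap is the case $\mathfrak{a}\subseteq\mathfrak{z}(\mathfrak{g}_0)$, equivalently $\mathfrak{g}_0^\text{ss}\cap\mathfrak{a}=\{0\}$. You acknowledge that your Step 2 needs $\mathfrak{g}_0^\text{ss}\cap\mathfrak{a}$ to be nontrivial, but you offer no argument when it is trivial, and this case is not vacuous: it occurs, for instance, when $P$ is a Borel subgroup of a split group (the paper's $(\PGL_4\mathbb{R},B)$ example), where $\mathfrak{g}_0=\mathfrak{a}$ is abelian. There your whole mechanism collapses --- $R$ is forced to be $0$, so you cannot use $R$ to arrange $\nu_0(\alpha^\kgf+R)=0$ and must instead find $\nu_0\in\Delta^+(\mathfrak{p}_+)$ with $\kgf(\nu_0,\alpha)=0$ outright. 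The paper handles this by observing that $\mathfrak{a}\subseteq\mathfrak{z}(\mathfrak{g}_0)$ forces $\Delta(\mathfrak{g}_0)=\emptyset$, hence $\Delta^+(\mathfrak{p}_+)=\Delta^+$, and then invoking real rank greater than $2$ to produce a positive restricted root $\nu_0$ that is $\kgf$-orthogonal to $\alpha$, so that $\alpha^\kgf+0$ works. Without this branch your proof is incomplete; your heuristic that ``$\mathfrak{p}_+$ being a genuine $\mathfrak{g}_0$-module with nontrivial semisimple part'' rules out the bad situation begs the question precisely in the configurations where the semisimple part of $\mathfrak{g}_0$ meets $\mathfrak{a}$ trivially.
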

\begin{proof}Because $(\beta+\gamma+\zeta)^\kgf\in\mathfrak{z}(\mathfrak{g}_0)\cap\mathfrak{a}$ is not a scaling element, there is some $\alpha\in\Delta^+(\mathfrak{p}_+)$ such that $\alpha((\beta+\gamma+\zeta)^\kgf)=(\beta+\gamma+\zeta)(\alpha^\kgf)=0$.

If $\mathfrak{a}\subseteq\mathfrak{z}(\mathfrak{g}_0)$, then $\Delta^+(\mathfrak{p}_+)=\Delta^+$, so because we assume $\mathfrak{g}$ has real rank greater than $2$, we can just pick a $\nu_0\in\Delta^+$ that is $\kgf$-orthogonal to $\alpha$ to get $\alpha^\kgf+0\in\ker(\beta+\gamma+\zeta)\cap\ker(\nu_0)$. 
If $\mathfrak{a}\not\subseteq\mathfrak{z}(\mathfrak{g}_0)$, then for any $\nu_0\in\Delta^+(\mathfrak{p}_+)$ with $\nu_0^\kgf\not\in\mathfrak{z}(\mathfrak{g}_0)$, we can choose \[R=\frac{-\kgf(\nu_0,\alpha)}{\nu_0\left(\mathrm{pr}_{\mathfrak{g}_0^\text{ss}\cap\mathfrak{a}}(\nu_0^\kgf)\right)}\mathrm{pr}_{\mathfrak{g}_0^\text{ss}\cap\mathfrak{a}}(\nu_0^\kgf).\quad\qed\]\end{proof}

For $(\beta+\gamma+\zeta)^\kgf\not\in\mathfrak{z}(\mathfrak{g}_0)$, things get slightly more difficult. Thankfully, for $\dim(\mathfrak{g}_0^\text{ss}\cap\mathfrak{a})>1$, we can still just use elementary linear algebra.

\begin{proposition}\label{bigg0case} Suppose $(\beta+\gamma+\zeta)^\kgf\not\in\mathfrak{z}(\mathfrak{g}_0)$ and $\dim(\mathfrak{g}_0^\text{ss}\cap\mathfrak{a})>1$. Then, for each $\alpha\in\Delta^+(\mathfrak{p}_+)$, there exist $R\in\mathfrak{g}_0^\text{ss}\cap\mathfrak{a}$ and $\nu_0\in\Delta^+(\mathfrak{p}_+)$ such that $\alpha^\kgf+R\in\ker(\beta+\gamma+\zeta)\cap\ker(\nu_0)$.\end{proposition}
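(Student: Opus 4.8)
The plan is to reduce the statement to a linear-algebra problem in the Euclidean space $(\mathfrak{a},\kgf)$ and then to produce a good choice of $\nu_0$ using the simplicity of $\mathfrak{g}$. Write $\mathfrak{a}=(\mathfrak{z}(\mathfrak{g}_0)\cap\mathfrak{a})\oplus(\mathfrak{g}_0^\text{ss}\cap\mathfrak{a})$ for the $\kgf$-orthogonal decomposition, abbreviate $\mathrm{pr}:=\mathrm{pr}_{\mathfrak{g}_0^\text{ss}\cap\mathfrak{a}}$, and set $P:=\mathrm{pr}((\beta+\gamma+\zeta)^\kgf)$. Since $(\beta+\gamma+\zeta)^\kgf\notin\mathfrak{z}(\mathfrak{g}_0)$ and $(\beta+\gamma+\zeta)^\kgf\in\mathfrak{a}$, its $\mathfrak{g}_0^\text{ss}\cap\mathfrak{a}$-component $P$ is nonzero. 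For $R\in\mathfrak{g}_0^\text{ss}\cap\mathfrak{a}$ and any $\nu_0\in\Delta^+(\mathfrak{p}_+)$, writing $Q:=\mathrm{pr}(\nu_0^\kgf)$, the two membership conditions $\alpha^\kgf+R\in\ker(\beta+\gamma+\zeta)$ and $\alpha^\kgf+R\in\ker(\nu_0)$ unwind via $\kgf$ into the inhomogeneous linear system $\kgf(P,R)=-\kgf(\beta+\gamma+\zeta,\alpha)$ and $\kgf(Q,R)=-\kgf(\nu_0,\alpha)$, using that $\kgf((\beta+\gamma+\zeta)^\kgf,R)=\kgf(P,R)$ for $R\in\mathfrak{g}_0^\text{ss}\cap\mathfrak{a}$. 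Because $\dim(\mathfrak{g}_0^\text{ss}\cap\mathfrak{a})>1$, this system is solvable for $R$ as soon as $P$ and $Q$ are linearly independent, since then $R\mapsto(\kgf(P,R),\kgf(Q,R))$ maps onto $\mathbb{R}^2$.

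Thus the whole proposition hinges on finding a single $\nu_0\in\Delta^+(\mathfrak{p}_+)$ whose projection $Q$ is not proportional to $P$; this is the step I expect to be the crux. I would prove the stronger statement that $\{\mathrm{pr}(\nu^\kgf):\nu\in\Delta^+(\mathfrak{p}_+)\}$ already spans all of $\mathfrak{g}_0^\text{ss}\cap\mathfrak{a}$. Suppose not; then some nonzero $H\in\mathfrak{g}_0^\text{ss}\cap\mathfrak{a}$ is $\kgf$-orthogonal to every such projection. Since $H\in\mathfrak{g}_0^\text{ss}\cap\mathfrak{a}$ and the projection is $\kgf$-orthogonal, $\kgf(H,\mathrm{pr}(\nu^\kgf))=\kgf(H,\nu^\kgf)=\nu(H)$, so $\nu(H)=0$ for every $\nu\in\Delta^+(\mathfrak{p}_+)$, hence, as $\Delta(\mathfrak{p}_-)=-\Delta(\mathfrak{p}_+)$, for every restricted root occurring in $\mathfrak{p}_+\oplus\mathfrak{p}_-$. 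Therefore $H$ centralizes $\mathfrak{p}_+\oplus\mathfrak{p}_-$, and so it centralizes the subalgebra $\mathfrak{s}$ that these generate. The key point is that $\mathfrak{s}$ is a nonzero ideal of $\mathfrak{g}$: it contains and is generated by $\mathfrak{p}_\pm$, so it is invariant under $\ad_{\mathfrak{p}_\pm}$, and it is invariant under $\ad_{\mathfrak{g}_0}$ because each $X\in\mathfrak{g}_0$ acts as a derivation carrying the generating set $\mathfrak{p}_\pm$ into $\mathfrak{p}_\pm\subseteq\mathfrak{s}$; as $\mathfrak{g}=\mathfrak{p}_-\oplus\mathfrak{g}_0\oplus\mathfrak{p}_+$ this yields $[\mathfrak{g},\mathfrak{s}]\subseteq\mathfrak{s}$. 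Since $\mathfrak{g}$ is simple and $\mathfrak{p}_+\neq\{0\}$, we get $\mathfrak{s}=\mathfrak{g}$, whence $H\in\mathfrak{z}(\mathfrak{g})=\{0\}$, a contradiction.

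With the spanning in hand the conclusion is immediate: because $\dim(\mathfrak{g}_0^\text{ss}\cap\mathfrak{a})>1$ and $P\neq 0$, the projections cannot all lie on the line $\mathbb{R}P$, so there is some $\nu_0\in\Delta^+(\mathfrak{p}_+)$ with $Q=\mathrm{pr}(\nu_0^\kgf)$ linearly independent from $P$. Solving the $2\times 2$ system from the first paragraph then produces the desired $R\in\mathfrak{g}_0^\text{ss}\cap\mathfrak{a}$ with $\alpha^\kgf+R\in\ker(\beta+\gamma+\zeta)\cap\ker(\nu_0)$. The only genuinely structural input is the spanning lemma, and within it I expect the verification that $\mathfrak{s}$ is an ideal to be the part most in need of care; everything else is routine finite-dimensional linear algebra in $(\mathfrak{a},\kgf)$, in the same spirit as Propositions \ref{scalobstr} and \ref{centralnotscaling}.
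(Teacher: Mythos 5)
Your proof is correct, and its core is the same as the paper's: unwind the two membership conditions into a $2\times 2$ inhomogeneous linear system for $R$ in the inner-product space $(\mathfrak{g}_0^\text{ss}\cap\mathfrak{a},\kgf)$, which is solvable once $\mathrm{pr}_{\mathfrak{g}_0^\text{ss}\cap\mathfrak{a}}(\nu_0^\kgf)$ is chosen linearly independent from $\mathrm{pr}_{\mathfrak{g}_0^\text{ss}\cap\mathfrak{a}}(\beta+\gamma+\zeta)^\kgf$. The one genuine difference is that the paper simply instructs the reader to ``choose'' such a $\nu_0$ without arguing that one exists, whereas you prove it: your spanning lemma --- that $\{\mathrm{pr}_{\mathfrak{g}_0^\text{ss}\cap\mathfrak{a}}(\nu^\kgf):\nu\in\Delta^+(\mathfrak{p}_+)\}$ spans $\mathfrak{g}_0^\text{ss}\cap\mathfrak{a}$, because any $H$ orthogonal to all of them would centralize $\mathfrak{p}_+\oplus\mathfrak{g}_-$, hence the nonzero ideal these generate, hence all of the simple algebra $\mathfrak{g}$ --- is sound (it uses the standing simplicity assumption of that section, and positive-definiteness of $\kgf$ on $\mathfrak{a}$ to pass from trivial orthocomplement to spanning). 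This fills in a step the paper leaves implicit, so your write-up is if anything more complete than the original; the only cost is a slightly longer argument.
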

\begin{proof}Since $\dim(\mathfrak{g}_0^\text{ss}\cap\mathfrak{a})>1$, choose $\nu_0\in\Delta^+(\mathfrak{p}_+)$ such that $\mathrm{pr}_{\mathfrak{g}_0^\text{ss}\cap\mathfrak{a}}(\nu_0^\kgf)$ is not in the span of $\mathrm{pr}_{\mathfrak{g}_0^\text{ss}\cap\mathfrak{a}}(\beta+\gamma+\zeta)^\kgf$. 
Then, there exists at least one $R\in\mathfrak{g}_0^\text{ss}\cap\mathfrak{a}$ solving the system of linear equations given by $(\beta+\gamma+\zeta)(R)=-\kgf(\beta+\gamma+\zeta,\alpha)$ and $\nu_0(R)=-\kgf(\nu_0,\alpha)$, and any such $R$ satisfies $\alpha^\kgf+R\in\ker(\beta+\gamma+\zeta)\cap\ker(\nu_0)$.\qed\end{proof}

If $\dim(\mathfrak{g}_0^\text{ss}\cap\mathfrak{a})=1$, then we no longer have the dimensional elbow room to get $R$ from a system of two linear equations. However, in this case $(\mathfrak{g}_0^\text{ss}\cap\mathfrak{a})_\kgf$ is spanned by one of the roots in $\Delta(\mathfrak{g}_0)$, and if we assume the real rank is greater than $2$, then there will always be at least one restricted root $\kgf$-orthogonal to one of---and hence all of---the restricted roots in $\Delta(\mathfrak{g}_0)$. If we assume that $\alpha$ and $\nu_0$ are a $\kgf$-orthogonal pair of such restricted roots, then for \[R=-\frac{-\kgf(\beta+\gamma+\zeta,\alpha)}{(\beta+\gamma+\zeta)(\mathrm{pr}_{\mathfrak{g}_0^\text{ss}\cap\mathfrak{a}}(\beta+\gamma+\zeta)^\kgf)}\mathrm{pr}_{\mathfrak{g}_0^\text{ss}\cap\mathfrak{a}}(\beta+\gamma+\zeta)^\kgf,\] we have $\alpha^\kgf+R\in\ker(\beta+\gamma+\zeta)\cap\ker(\nu_0)$, and for sufficiently high real rank, such a pair always exists
.

Thus, for sufficiently high real rank, we get a one-parameter family of essential automorphisms $a_\mathbb{R}$ that fix both $o$ and a point $u_0\neq o$ whenever $(\beta+\gamma+\zeta)^\kgf$ is not a scaling element.

Now, we move on to strategies for finding $\dot{c}_0$. For $\nu\in\Delta^+(\mathfrak{p}_+)$, $\nu(E_\text{gr})$ is always going to be a positive integer, so an obvious strategy is to try to produce $\dot{c}_0$ from $E_\text{gr}$. Specifically, we want to make $\dot{c}_0=E_\text{gr}-S$ for some $S\in\mathfrak{a}$ such that $(\beta+\gamma+\zeta)(S)=(\beta+\gamma+\zeta)(E_\text{gr})$ and $\nu(S)<\nu(E_\text{gr})$ for each $\nu\in\Delta^+(\mathfrak{p}_+)$. Since $\mathfrak{p}_+$ is generated by the grading component $\mathfrak{g}_1$, this latter condition is equivalent to requiring that $\nu(S)<1$ for each $\nu\in\Delta^+(\mathfrak{p}_+)$ with $\nu(E_\text{gr})=1$.

A particularly convenient choice of $S$ would be the $\kgf$-orthogonal projection of $E_\text{gr}$ onto the span of $(\beta+\gamma+\zeta)^\kgf$, so that $E_\text{gr}-S$ would be the $\kgf$-orthogonal projection of $E_\text{gr}$ onto $\ker(\beta+\gamma+\zeta)$. The following proposition tells us when this choice is valid.

\begin{proposition}\label{shrink} Let $\dot{c}_0=\mathrm{pr}_{\ker(\beta+\gamma+\zeta)}(E_\text{gr})$ be the $\kgf$-orthogonal projection of the grading element onto $\ker(\beta+\gamma+\zeta)$. Then, $\nu(\dot{c}_0)>0$ for all $\nu\in\Delta^+(\mathfrak{p}_+)$ if and only if \[\kgf(\beta+\gamma+\zeta,\nu)<\frac{\kgf(\beta+\gamma+\zeta,\beta+\gamma+\zeta)}{\beta(E_\text{gr})+\gamma(E_\text{gr})+\zeta(E_\text{gr})}\] for all $\nu$ with $\nu(E_\text{gr})=1$.\end{proposition}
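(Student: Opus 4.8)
The plan is to make the orthogonal projection completely explicit and then reduce the infinitely many inequalities $\nu(\dot c_0)>0$, $\nu\in\Delta^+(\mathfrak p_+)$, to those coming from the bottom grading piece $\mathfrak g_1$. Write $\xi:=\beta+\gamma+\zeta\in\mathfrak a^\vee$ and $d:=\xi(E_\text{gr})$. First I would record two facts handed to us by the setup. Since $\theta$ maps $\mathfrak g_i$ to $\mathfrak g_{-i}$, a short computation with $[E_\text{gr},X]=iX$ gives $\theta E_\text{gr}=-E_\text{gr}$; combined with $E_\text{gr}\in\mathfrak c$ (guaranteed by the definition of a compatible restricted root system), this places $E_\text{gr}\in\mathfrak a$. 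Moreover $d>0$: the $E_\text{gr}$-eigenvalue of $\Omega$ is exactly $\beta(E_\text{gr})+\gamma(E_\text{gr})+\zeta(E_\text{gr})$, and $\Omega\in\ker(\square)_+$ lies in a positive $E_\text{gr}$-eigenspace. Because $\kgf$ is positive definite on $\mathfrak a$ and $\ker(\xi)=\{H\in\mathfrak a:\kgf(\xi^\kgf,H)=0\}$ is the $\kgf$-orthogonal complement of the line $\mathbb R\,\xi^\kgf$, the projection is
\[\dot c_0=E_\text{gr}-\frac{\kgf(E_\text{gr},\xi^\kgf)}{\kgf(\xi^\kgf,\xi^\kgf)}\,\xi^\kgf=E_\text{gr}-\frac{d}{\kgf(\xi,\xi)}\,\xi^\kgf,\]
where I used $\kgf(E_\text{gr},\xi^\kgf)=\xi(E_\text{gr})=d$.

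Next I would evaluate $\dot c_0$ against an arbitrary restricted root. For $\nu\in\Delta^+(\mathfrak p_+)$,
\[\nu(\dot c_0)=\nu(E_\text{gr})-\frac{d}{\kgf(\xi,\xi)}\,\kgf(\nu,\xi),\]
so, since $d>0$ and $\kgf(\xi,\xi)>0$, the inequality $\nu(\dot c_0)>0$ is equivalent to $\kgf(\nu,\xi)<\frac{\kgf(\xi,\xi)}{d}\,\nu(E_\text{gr})$. Specializing to $\nu$ with $\nu(E_\text{gr})=1$ turns this into precisely the displayed inequality of the statement. This already yields the forward implication: if $\nu(\dot c_0)>0$ for every $\nu\in\Delta^+(\mathfrak p_+)$, then in particular it holds for the roots of $\mathfrak g_1$, which is exactly the asserted condition.

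For the converse I would use that the nilradical $\mathfrak p_+$ is generated by its bottom piece $\mathfrak g_1$, so that $\mathfrak g_m=[\mathfrak g_1,\mathfrak g_{m-1}]$ for $m\geq 2$. Given $\nu\in\Delta^+(\mathfrak p_+)$ with $\nu(E_\text{gr})=m$, the root space $\mathfrak g_\nu\subseteq\mathfrak g_m$ is nonzero, so there is a root $\mu$ with $\mu(E_\text{gr})=1$ and $[\mathfrak g_\mu,\mathfrak g_{\nu-\mu}]\neq\{0\}$; then $\nu-\mu\in\Delta$ with $(\nu-\mu)(E_\text{gr})=m-1$. Inducting on $m$, I obtain $\nu=\mu_1+\cdots+\mu_m$ with each $\mu_j\in\Delta$ and $\mu_j(E_\text{gr})=1$, hence each $\mu_j\in\Delta^+(\mathfrak p_+)$. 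Since $\dot c_0\in\mathfrak a$ and root evaluation is linear, $\nu(\dot c_0)=\sum_{j=1}^m\mu_j(\dot c_0)$; assuming the displayed inequality, each summand is positive by the $m=1$ case, and therefore $\nu(\dot c_0)>0$. This gives the reverse implication and completes the equivalence.

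The genuinely load-bearing step is the last one, that the sign condition on the grading-one roots propagates to all of $\Delta^+(\mathfrak p_+)$; it rests on the additive decomposition $\nu=\mu_1+\cdots+\mu_m$ into degree-one roots (equivalently, on $\mathfrak g_m=[\mathfrak g_1,\mathfrak g_{m-1}]$), after which linearity of $\nu\mapsto\nu(\dot c_0)$ finishes things. The one point to watch is the standing hypothesis $d>0$ (and $\xi\neq 0$, so that $\ker(\xi)$ is a genuine hyperplane and $\kgf(\xi,\xi)>0$): it is what lets me clear denominators without reversing the inequality, and it is precisely where the positive homogeneity of $\Omega$ enters.
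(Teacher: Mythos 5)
Your proof is correct and follows essentially the same route as the paper: write the projection explicitly as $E_\text{gr}-\tfrac{\xi(E_\text{gr})}{\kgf(\xi,\xi)}\xi^\kgf$ and evaluate $\nu(\dot c_0)$ for $\nu(E_\text{gr})=1$. The only difference is that you spell out, inside the proof, the reduction from all of $\Delta^+(\mathfrak p_+)$ to the degree-one roots via $\mathfrak g_m=[\mathfrak g_1,\mathfrak g_{m-1}]$ and the positivity $\xi(E_\text{gr})>0$, both of which the paper handles (or takes for granted) in the discussion immediately preceding the proposition.
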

\begin{proof}Since \[\dot{c}_0=E_\text{gr}-\frac{\beta(E_\text{gr})+\gamma(E_\text{gr})+\zeta(E_\text{gr})}{\kgf(\beta+\gamma+\zeta,\beta+\gamma+\zeta)}(\beta+\gamma+\zeta)^\kgf,\] for $\nu(E_\text{gr})=1$ we get \[\nu(\dot{c}_0)=1-\frac{\beta(E_\text{gr})+\gamma(E_\text{gr})+\zeta(E_\text{gr})}{\kgf(\beta+\gamma+\zeta,\beta+\gamma+\zeta)}\kgf(\nu,\beta+\gamma+\zeta),\] which is greater than $0$ if and only if \[\kgf(\beta+\gamma+\zeta,\nu)<\frac{\kgf(\beta+\gamma+\zeta,\beta+\gamma+\zeta)}{\beta(E_\text{gr})+\gamma(E_\text{gr})+\zeta(E_\text{gr})}.\quad\qed\]\end{proof}

As we shall see in Section \ref{examples}, the choice of $\dot{c}_0=\mathrm{pr}_{\ker(\beta+\gamma+\zeta)}(E_\text{gr})$ seems to work in most---but not all---cases. Specifically, another choice of $\dot{c}_0$ is needed for the example in \ref{pathgeom}.

\subsection{Holonomy}
In the situation of Theorem \ref{stumps}, let $X\in\mathfrak{g}_{-\nu_0}<\mathfrak{j}_\Omega$. By Theorem 2.2 of \cite{Hammerl}, the Lie algebra of the restricted holonomy group at $(\exp(X),e)$, which we identify with $\exp(X)\in J_\Omega$, of the curvature tree grown from $\Omega$ is given by \begin{align*}\mathfrak{hol}^\circ_{\exp(X)}(J_\Omega\times_{K_\Omega}P,\MC{\Omega}) & =\mathrm{im}(\Omega)+[\psi(\mathfrak{j}_\Omega),\mathrm{im}(\Omega)] \\ & \quad +[\psi(\mathfrak{j}_\Omega),[\psi(\mathfrak{j}_\Omega),\mathrm{im}(\Omega)]]+\cdots \\ & =\mathrm{im}(\Omega)+[\mathfrak{g}_-+\mathfrak{k}_\Omega,\mathrm{im}(\Omega)] \\ & \quad +[\mathfrak{g}_-+\mathfrak{k}_\Omega,[\mathfrak{g}_-+\mathfrak{k}_\Omega,\mathrm{im}(\Omega)]]+\cdots \\ & =\mathrm{im}(\Omega)+[\mathfrak{g}_-,\mathrm{im}(\Omega)] \\ & \quad +[\mathfrak{g}_-,[\mathfrak{g}_-,\mathrm{im}(\Omega)]]+\cdots,\end{align*} since $\mathfrak{k}_\Omega$ is the Lie algebra of the stabilizer of $\Omega$ by definition, so since $\partial\Omega=0$, this tells us that \[\mathfrak{hol}^\circ_{\exp(X)}(J_\Omega\times_{K_\Omega}P,\MC{\Omega})\leq\sum_{i,j\geq 0}\mathfrak{g}_{\zeta-i\beta-j\gamma}.\] In particular, $\ker(\beta+\gamma+\zeta)\cap\mathfrak{hol}^\circ_{\exp(X)}(J_\Omega\times_{K_\Omega}P,\MC{\Omega})=\{0\}$. Because $J_\Omega/K_\Omega$ is simply connected, the holonomy group of the curvature tree grown from $\Omega$, and hence the holonomy group of the curvature tree with $o$ removed, is equal to its restricted holonomy group $\Hol^\circ_{\exp(X)}(\Omega):=\Hol^\circ_{\exp(X)}(J_\Omega\times_{K_\Omega}P,\MC{\Omega})$, the connected subgroup generated by $\mathfrak{hol}^\circ_{\exp(X)}(J_\Omega\times_{K_\Omega}P,\MC{\Omega})$.

Therefore, from the remarks in Section 6 of \cite{HolonomyPaper}, it follows that the holonomy group of the induced Cartan geometry on \[\mathscr{G}_{\Omega,t_0}:=c_{t_0\mathbb{Z}}\backslash((J_\Omega\times_{K_\Omega}P)\setminus\{q_{{}_P}^{-1}(o)\})\] at the point $c_{t_0\mathbb{Z}}\exp(X)\in\mathscr{G}_{\Omega,t_0}$ is given by the subgroup of $G$ generated by $\Hol^\circ_{\exp(X)}(\Omega)$ and $\exp(\psi(X))^{-1}c_{t_0\mathbb{Z}}\exp(\psi(X))$. Moreover, by the description above, $\Hol^\circ_{\exp(X)}(\Omega)$ is normalized by $\exp(\psi(X))$, and $c_{t_0\mathbb{Z}}$ normalizes it as well by Proposition 6.2 of \cite{HolonomyPaper}, so \[\exp(\psi(X))\Hol_{c_{t_0\mathbb{Z}}\exp(X)}(\mathscr{G}_{\Omega,t_0},\MC{\Omega})\exp(\psi(X))^{-1}=\Hol^\circ_{\exp(X)}(\Omega)c_{t_0\mathbb{Z}}.\]

The conjugacy class of the holonomy group is invariant under geometric isomorphism, so if two Cartan geometries have non-conjugate holonomy groups, then they are not isomorphic. Thus, in order to show that $(\mathscr{G}_{\Omega,t_0},\MC{\Omega})$ and $(\mathscr{G}_{\Omega,t_1},\MC{\Omega})$ are not isomorphic for $t_1>t_0>0$, we just need to show that there is no $g\in G$ such that \[g\Hol^\circ_{\exp(X)}(\Omega)c_{t_0\mathbb{Z}}g^{-1}=\Hol^\circ_{\exp(X)}(\Omega)c_{t_1\mathbb{Z}}.\] Indeed, if there were such a $g\in G$, then $gc_{t_0}g^{-1}\in\Hol^\circ_{\exp(X)}(\Omega)c_{t_1}$, but $\Hol^\circ_{\exp(X)}(\Omega)$ is unipotent and normalized by $c_\mathbb{R}$, so this would force $c_{t_0}=c_{t_1}$, which contradicts $t_0<t_1$. In other words, we have the following result.

\begin{proposition}\label{distinct} For $t_1>t_0>0$, the Cartan geometries $(\mathscr{G}_{\Omega,t_0},\MC{\Omega})$ and $(\mathscr{G}_{\Omega,t_1},\MC{\Omega})$ are not isomorphic.\end{proposition}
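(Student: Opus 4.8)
The plan is to use the conjugacy class of the holonomy group as an isomorphism invariant, exactly as set up in the discussion preceding the statement. Write $U:=\Hol^\circ_{\exp(X)}(\Omega)$ and $H_t:=U\,c_{t\mathbb{Z}}$, so that the holonomy group of $(\mathscr{G}_{\Omega,t},\MC{\Omega})$ at $c_{t\mathbb{Z}}\exp(X)$ is $G$-conjugate to $H_t$. Since the conjugacy class of the holonomy group is preserved by geometric isomorphism, it suffices to show that $H_{t_0}$ and $H_{t_1}$ are not conjugate in $G$ whenever $t_1>t_0>0$. I would argue by contradiction, assuming $gH_{t_0}g^{-1}=H_{t_1}$ for some $g\in G$.

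First I would pin down what such a $g$ does to the generators. The subgroup $U\leq\sum_{i,j\geq 0}\mathfrak{g}_{\zeta-i\beta-j\gamma}$ is connected, unipotent, and closed, while $c_{t\mathbb{Z}}$ is discrete; hence $U$ is the identity component of $H_t$, and any conjugation carrying $H_{t_0}$ onto $H_{t_1}$ must carry identity components onto identity components, giving $gUg^{-1}=U$. The induced isomorphism on $H_t/U\cong\mathbb{Z}$ sends a generator to a generator, so $gc_{t_0}g^{-1}=h\,c_{t_1}^{\varepsilon}$ for some $h\in U$ and $\varepsilon\in\{\pm 1\}$.

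The crux is to transfer the semisimplicity of $gc_{t_0}g^{-1}$ across this relation. Since $\dot{c}_0\in\ker(\beta+\gamma+\zeta)$ with $\beta(\dot c_0),\gamma(\dot c_0)>0$, we have $\zeta(\dot c_0)=-(\beta+\gamma)(\dot c_0)<0$, so for every $i,j\geq 0$ the weight $(\zeta-i\beta-j\gamma)(\dot c_0)$ is strictly negative. Consequently $c_{t_1}^{\varepsilon}=\exp(\varepsilon t_1\dot c_0)$ acts on the Lie algebra of $U$ with all eigenvalues equal to $e^{\varepsilon t_1(\zeta-i\beta-j\gamma)(\dot c_0)}\neq 1$; as $c_{t_1}^{\varepsilon}$ normalizes the unipotent group $U$, the standard fact that $v\mapsto v^{-1}\big(c_{t_1}^{\varepsilon}\,v\,c_{t_1}^{-\varepsilon}\big)$ is then a bijection of $U$ shows that every element of the coset $U\,c_{t_1}^{\varepsilon}$ is $U$-conjugate to $c_{t_1}^{\varepsilon}$. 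Because $gc_{t_0}g^{-1}$ is conjugate to $c_{t_0}$, hence semisimple, this identifies it up to conjugacy with $c_{t_1}^{\varepsilon}$, so $c_{t_0}$ and $c_{t_1}^{\varepsilon}$ are $G$-conjugate. Finally, conjugate elements share the same $\Ad$-spectrum, and the spectral radius of $\Ad_{c_t}$ is $e^{|t|M}$ with $M=\max_{\nu\in\Delta}\nu(\dot c_0)\geq\beta(\dot c_0)>0$; equating spectral radii forces $t_0=t_1$, contradicting $t_0<t_1$.

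I expect the main obstacle to be the third step: justifying cleanly that a unipotent element times a \emph{contracting} semisimple element is $U$-conjugate to that semisimple part, so that the semisimplicity of $gc_{t_0}g^{-1}$ propagates to a genuine conjugacy of the $c$'s. This is precisely where the hypotheses that $U$ is unipotent and normalized by $c_\mathbb{R}$, together with the sign computation $(\zeta-i\beta-j\gamma)(\dot c_0)<0$, are indispensable; the surrounding bookkeeping (identity components, generators of $\mathbb{Z}$, and the spectral-radius invariant) is routine by comparison.
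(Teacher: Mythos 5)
Your proof is correct and follows the same route as the paper: the conjugacy class of the holonomy group $\Hol^\circ_{\exp(X)}(\Omega)\,c_{t\mathbb{Z}}$ is the isomorphism invariant, and the unipotence of $\Hol^\circ_{\exp(X)}(\Omega)$ together with its normalization by $c_\mathbb{R}$ is what rules out conjugacy for distinct $t$. You are in fact more careful than the paper at the final step, handling the generator ambiguity $c_{t_1}^{\pm 1}$ and making explicit the twisted-conjugation bijection that shows every element of the coset $\Hol^\circ_{\exp(X)}(\Omega)\,c_{t_1}^{\varepsilon}$ is conjugate to $c_{t_1}^{\varepsilon}$, where the paper simply asserts that unipotence and normalization force $c_{t_0}=c_{t_1}$.
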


Thus, the above construction gives an infinite family of distinct non-flat regular normal Cartan geometries with essential automorphisms over a closed base manifold.

\section{Examples}\label{examples}
Here, we give a few representative examples. This is not a complete list; indeed, we suspect that our construction gives at least one example for each parabolic model geometry $(G,P)$ with $G$ simple of real rank at least $3$, though as noted in \ref{acstrat}, we have not yet proven this.

\subsection{Almost Grassmannian structures}\label{almostgrassmannian}
For $m>3$ and $k\leq m/2$, let $G=\PGL_m\mathbb{R}$ and \[P=\left\{\begin{pmatrix}A & B \\ 0 & C\end{pmatrix}\in G:\begin{matrix}A\in\GLin_k\mathbb{R}, C\in\GLin_{m-k}\mathbb{R}, \\ \text{and }B\in M_{k\times(m-k)}(\mathbb{R})\end{matrix}\right\}.\] In this case, regular normal Cartan geometries of type $(G,P)$ correspond to almost Grassmannian structures. These are split-real $|1|$-graded geometries, with \begin{align*}\mathfrak{g}_{-1} & =\left\{\begin{pmatrix}0 & 0 \\ X & 0\end{pmatrix}:X\in M_{(m-k)\times k}(\mathbb{R})\right\}, \\ \mathfrak{g}_0 & =\left\{\begin{pmatrix}X & 0 \\ 0 & Y\end{pmatrix}:X\in\mathfrak{gl}_k\mathbb{R},Y\in\mathfrak{gl}_{m-k}\mathbb{R}\right\},\text{ and} \\ \mathfrak{g}_1 & =\left\{\begin{pmatrix}0 & X \\ 0 & 0\end{pmatrix}:X\in M_{k\times(m-k)}(\mathbb{R})\right\}.\end{align*} The grading element is given by \[E_\text{gr}=\begin{pmatrix}\mathds{1} & 0 \\ 0 & 0\end{pmatrix}=\begin{pmatrix}\mathds{1} & 0 \\ 0 & 0\end{pmatrix}-\frac{k}{m}\begin{pmatrix}\mathds{1} & 0 \\ 0 & \mathds{1}\end{pmatrix}=\frac{1}{m}\begin{pmatrix}(m-k)\mathds{1} & 0 \\ 0 & -k\mathds{1}\end{pmatrix}\in\mathfrak{pgl}_m\mathbb{R}.\]
Thus, for $\theta:X\mapsto -X^\top$, $\mathfrak{c}=\mathfrak{a}$ the image in $\mathfrak{pgl}_m\mathbb{R}$ of the subalgebra of diagonal matrices in $\mathfrak{gl}_m\mathbb{R}$, $\varepsilon_i-\varepsilon_j\in\mathfrak{a}^\vee$ given by \[(\varepsilon_i-\varepsilon_j)\begin{pmatrix}d_1 & 0 & \cdots & 0 \\ 0 & d_2 & \cdots & 0 \\ \vdots & \vdots & \ddots & \vdots \\ 0 & 0 & \cdots & d_m\end{pmatrix}=d_i-d_j,\] $\Delta=\{\varepsilon_i-\varepsilon_j:i\neq j\}$, and $\Delta^+=\{\varepsilon_i-\varepsilon_j:i<j\}$, $(\theta,\mathfrak{c},\mathfrak{a},\Delta,\Delta^+)$ is a compatible (restricted) root system for $(G,P)$.

Since $G$ is split-real, we can just check Yamaguchi's list in \cite{Yamaguchi2} to get a lowest weight vector \[(\eta_{\alpha_k})_\kgf\wedge(\eta_{s_{\alpha_k}(\alpha_{k+1})})_\kgf\otimes\eta_{-s_{\alpha_k}s_{\alpha_{k+1}}(\varepsilon_1-\varepsilon_m)},\] which is a harmonic seed by Corollary \ref{seedcheck}. This takes the form $(\eta_{\varepsilon_2-\varepsilon_3})_\kgf\wedge(\eta_{\varepsilon_2-\varepsilon_4})_\kgf\otimes\eta_{\varepsilon_2-\varepsilon_1}$ for $k=2$ and $m=4$, $(\eta_{\varepsilon_1-\varepsilon_2})_\kgf\wedge(\eta_{\varepsilon_1-\varepsilon_3})_\kgf\otimes\eta_{\varepsilon_m-\varepsilon_2}$ for $k=1$ and $m>3$, and $(\eta_{\varepsilon_k-\varepsilon_{k+1}})_\kgf\wedge(\eta_{\varepsilon_k-\varepsilon_{k+2}})_\kgf\otimes\eta_{\varepsilon_m-\varepsilon_1}$ for $k>1$ and $m>4$.

For $k=2$ and $m=4$, our harmonic seed is of the form \[\Omega=(\eta_{\varepsilon_2-\varepsilon_3})_\kgf\wedge(\eta_{\varepsilon_2-\varepsilon_4})_\kgf\otimes\eta_{\varepsilon_2-\varepsilon_1}.\] Here, we have \[(\varepsilon_2-\varepsilon_3)+(\varepsilon_2-\varepsilon_4)+(\varepsilon_2-\varepsilon_1)=-\varepsilon_1+3\varepsilon_2-\varepsilon_3-\varepsilon_4=4\varepsilon_2,\] and \[(4\varepsilon_2)^\kgf=(-\varepsilon_1+3\varepsilon_2-\varepsilon_3-\varepsilon_4)^\kgf=\frac{1}{8}\begin{pmatrix}-1 & 0 & 0 & 0 \\ 0 & 3 & 0 & 0 \\ 0 & 0 & -1 & 0 \\ 0 & 0 & 0 & -1\end{pmatrix}=\frac{1}{8}\begin{pmatrix}0 & 0 & 0 & 0 \\ 0 & 4 & 0 & 0 \\ 0 & 0 & 0 & 0 \\ 0 & 0 & 0 & 0\end{pmatrix}.\] 
Notably, this shows that $(4\varepsilon_2)^\kgf\not\in\mathfrak{z}(\mathfrak{g}_0)$, since in this case $\mathfrak{z}(\mathfrak{g}_0)$ is just the span $\langle E_\text{gr}\rangle$ of $E_\text{gr}$, so $(4\varepsilon_2)^\kgf$ is not a scaling element. Since \[\mathfrak{g}_0^\text{ss}\cap\mathfrak{a}=\left\{\begin{pmatrix}r & 0 & 0 & 0 \\ 0 & -r & 0 & 0 \\ 0 & 0 & s & 0 \\ 0 & 0 & 0 & -s\end{pmatrix}:r,s\in\mathbb{R}\right\}\] is $2$-dimensional, we can apply Proposition \ref{bigg0case} to get $\dot{a}_0$. Explicitly, let us pick $\alpha=\varepsilon_1-\varepsilon_4$. Then, \[\mathrm{pr}_{\mathfrak{g}_0^\text{ss}\cap\mathfrak{a}}(4\varepsilon_2)^\kgf=\frac{1}{4}\begin{pmatrix}-1 & 0 & 0 & 0 \\ 0 & 1 & 0 & 0 \\ 0 & 0 & 0 & 0 \\ 0 & 0 & 0 & 0\end{pmatrix}\] and \[\mathrm{pr}_{\mathfrak{g}_0^\text{ss}\cap\mathfrak{a}}(\varepsilon_2-\varepsilon_3)^\kgf=\frac{1}{16}\begin{pmatrix}-1 & 0 & 0 & 0 \\ 0 & 1 & 0 & 0 \\ 0 & 0 & -1 & 0 \\ 0 & 0 & 0 & 1\end{pmatrix},\] so $\mathrm{pr}_{\mathfrak{g}_0^\text{ss}\cap\mathfrak{a}}(\varepsilon_2-\varepsilon_3)^\kgf\not\in\langle\mathrm{pr}_{\mathfrak{g}_0^\text{ss}\cap\mathfrak{a}}(4\varepsilon_2)^\kgf\rangle$, hence there is at least one $R\in\mathfrak{g}_0^\text{ss}\cap\mathfrak{a}$, namely $R=0$, such that $4\varepsilon_2(R)=-\kgf(4\varepsilon_2,\alpha)=0$ and $(\varepsilon_2-\varepsilon_3)(R)=-\kgf(\varepsilon_2-\varepsilon_3,\alpha)=0$. In other words, for $\nu_0=\varepsilon_2-\varepsilon_3$, \[\dot{a}_0=\alpha^\kgf+0\in\ker(4\varepsilon_2)\cap\ker(\nu_0).\] For $\dot{c}_0$, note that for $\varepsilon_i-\varepsilon_j\in\Delta^+(\mathfrak{p}_+)=\{\varepsilon_i-\varepsilon_j:1\leq i\leq 2<j\leq 4\}$, \[\kgf(4\varepsilon_2,\varepsilon_i-\varepsilon_j)=\left\{\begin{matrix}1/2\text{ if }i=2, \\ 0\text{ otherwise,}\end{matrix}\right.\] and \[\frac{\kgf(4\varepsilon_2,4\varepsilon_2)}{4\varepsilon_2(E_\text{gr})}=\frac{(3/2)}{2}=\frac{3}{4}>\frac{1}{2},\] so \[\dot{c}_0=\mathrm{pr}_{\ker(4\varepsilon_2)}(E_\text{gr})=\begin{pmatrix}1 & 0 & 0 & 0 \\ 0 & 1/3 & 0 & 0 \\ 0 & 0 & 0 & 0 \\ 0 & 0 & 0 & 0\end{pmatrix}=\frac{1}{3}\begin{pmatrix}2 & 0 & 0 & 0 \\ 0 & 0 & 0 & 0 \\ 0 & 0 & -1 & 0 \\ 0 & 0 & 0 & -1\end{pmatrix}\] satisfies $\nu(\dot{c}_0)>0$ for all $\nu\in\Delta^+(\mathfrak{p}_+)$ by Proposition \ref{shrink}. By Theorem \ref{stumps} and Proposition \ref{distinct}, we therefore get an infinite family of regular normal Cartan geometries of type $(\PGL_4\mathbb{R},P)$ with nonvanishing curvature and essential automorphisms over a closed base manifold.

Similarly, for $k=1$ and $m>3$, our harmonic seed is of the form \[\Omega=(\eta_{\varepsilon_1-\varepsilon_2})_\kgf\wedge(\eta_{\varepsilon_1-\varepsilon_3})_\kgf\otimes\eta_{\varepsilon_m-\varepsilon_2}.\] Note that \[(\varepsilon_1-\varepsilon_2)+(\varepsilon_1-\varepsilon_3)+(\varepsilon_m-\varepsilon_2)=2(\varepsilon_1-\varepsilon_2)-\varepsilon_3+\varepsilon_m,\] and $(2(\varepsilon_1-\varepsilon_2)-\varepsilon_3+\varepsilon_m)^\kgf\not\in\mathfrak{z}(\mathfrak{g}_0)=\langle E_\text{gr}\rangle$, so in particular it is not a scaling element and for each choice of $\alpha\in\Delta^+(\mathfrak{p}_+)=\{\varepsilon_1-\varepsilon_i:1<i\leq m\}$, we have a choice of $\nu_0\in\Delta^+(\mathfrak{p}_+)$ and $R$ as in Proposition \ref{bigg0case}. Moreover, \[\kgf(2(\varepsilon_1-\varepsilon_2)-\varepsilon_3+\varepsilon_m,\varepsilon_1-\varepsilon_i)=\left\{\begin{matrix}2/m \text{ if } i=2, \\ \frac{3}{2m} \text{ if } i=3, \\ 1/m \text{ if } 3<i<m, \\ \frac{1}{2m} \text{ if } i=m,\end{matrix}\right.\] and \[\frac{\kgf(2(\varepsilon_1-\varepsilon_2)-\varepsilon_3+\varepsilon_m,2(\varepsilon_1-\varepsilon_2)-\varepsilon_3+\varepsilon_m)}{(\varepsilon_1-\varepsilon_2)(E_\text{gr})+(\varepsilon_1-\varepsilon_3)(E_\text{gr})+(\varepsilon_m-\varepsilon_2)(E_\text{gr})}=\frac{2^2+2^2+1^2+1^2}{2m(1+1+0)}=\frac{5}{2m},\] so by Proposition \ref{shrink}, $\dot{c}_0=\mathrm{pr}_{\ker(2(\varepsilon_1-\varepsilon_2)-\varepsilon_3+\varepsilon_m)}(E_\text{gr})$ satisfies $(\varepsilon_1-\varepsilon_i)(\dot{c}_0)>0$ for all $i$. Thus, by Theorem \ref{stumps} and Proposition \ref{distinct}, we get infinitely many non-flat regular normal Cartan geometries of type $(\PGL_m\mathbb{R},P)$ with essential automorphisms on a closed base manifold.

Finally, for $k>1$ and $m>4$, our harmonic seed is of the form \[\Omega=(\eta_{\varepsilon_k-\varepsilon_{k+1}})_\kgf\wedge(\eta_{\varepsilon_k-\varepsilon_{k+2}})_\kgf\otimes\eta_{\varepsilon_m-\varepsilon_1}.\] The weight of $\Omega$ is then given by \[(\varepsilon_k-\varepsilon_{k+1})+(\varepsilon_k-\varepsilon_{k+2})+(\varepsilon_m-\varepsilon_1)=2\varepsilon_k-\varepsilon_1-\varepsilon_{k+1}-\varepsilon_{k+2}+\varepsilon_m,\] and $(2\varepsilon_k-\varepsilon_1-\varepsilon_{k+1}-\varepsilon_{k+2}+\varepsilon_m)^\kgf\not\in\mathfrak{z}(\mathfrak{g}_0)=\langle E_\text{gr}\rangle$, hence it is not a scaling element and for each choice of $\alpha\in\Delta^+(\mathfrak{p}_+)=\{\varepsilon_i-\varepsilon_j:1\leq i\leq k<j\leq m\}$, we have a choice of $\nu_0\in\Delta^+(\mathfrak{p}_+)$ and $R\in\mathfrak{g}_0^\text{ss}\cap\mathfrak{a}$ as in Proposition \ref{bigg0case}. For our choice of $\dot{c}_0$, \[\kgf(2\varepsilon_k-\varepsilon_1-\varepsilon_{k+1}-\varepsilon_{k+2}+\varepsilon_m,\varepsilon_i-\varepsilon_j)\leq\frac{3}{2m}\] and \[\frac{\kgf(2\varepsilon_k-\varepsilon_1-\varepsilon_{k+1}-\varepsilon_{k+2}+\varepsilon_m,2\varepsilon_k-\varepsilon_1-\varepsilon_{k+1}-\varepsilon_{k+2}+\varepsilon_m)}{(\varepsilon_k-\varepsilon_{k+1})(E_\text{gr})+(\varepsilon_k-\varepsilon_{k+2})(E_\text{gr})+(\varepsilon_m-\varepsilon_1)(E_\text{gr})}=\frac{4}{m},\] so $\dot{c}_0=\mathrm{pr}_{\ker(2\varepsilon_k-\varepsilon_1-\varepsilon_{k+1}-\varepsilon_{k+2}+\varepsilon_m)}(E_\text{gr})$ satisfies $\nu(\dot{c}_0)>0$ for all $\nu\in\Delta^+(\mathfrak{p}_+)$ by Proposition \ref{shrink}. Thus, we once again get infinitely many regular normal Cartan geometries of type $(\PGL_m\mathbb{R},P)$ with essential automorphisms and nonvanishing curvature on a closed base manifold.

\subsection{$(\PGL_4\mathbb{R},B)$ with $B$ a Borel subgroup}
Here, we again have $G=\PGL_4\mathbb{R}$, but this time we choose our parabolic subgroup to be the Borel subgroup $B$, corresponding to the upper triangular matrices. This is a split-real $|3|$-graded geometry, with $\mathfrak{g}_0=\mathfrak{c}=\mathfrak{a}$ corresponding to images of diagonal matrices in $\mathfrak{pgl}_4\mathbb{R}$ and $\mathfrak{g}_i=\langle E_{k\ell}:\ell-k=i\rangle$ for $i\neq 0$, where we use $E_{k\ell}$ to denote the image in $\mathfrak{pgl}_4\mathbb{R}$ of the matrix whose only nonzero entry is $1$ in the $k^\text{th}$ row and $\ell^\text{th}$ column. The grading element is given by \[E_\text{gr}=\begin{pmatrix}4 & 0 & 0 & 0 \\ 0 & 3 & 0 & 0 \\ 0 & 0 & 2 & 0 \\ 0 & 0 & 0 & 1\end{pmatrix}=\begin{pmatrix}3/2 & 0 & 0 & 0 \\ 0 & 1/2 & 0 & 0 \\ 0 & 0 & -1/2 & 0 \\ 0 & 0 & 0 & -3/2\end{pmatrix}\in\mathfrak{pgl}_4\mathbb{R},\] so the $(\theta,\mathfrak{c},\mathfrak{a},\Delta,\Delta^+)$ from the previous example is also a compatible (restricted) root system for $(G,B)$.

Again, because $G$ is split-real, we can just read off a lowest weight vector from Yamaguchi's list \cite{Yamaguchi2}. In this case, however, there are lowest weight vectors that are harmonic seeds (by Corollary \ref{seedcheck}) but whose duals via $\kgf$ are scaling elements.

For instance, $(\eta_{\varepsilon_1-\varepsilon_2})_\kgf\wedge(\eta_{\varepsilon_3-\varepsilon_4})_\kgf\otimes\eta_{\varepsilon_3-\varepsilon_2}$ is a lowest weight vector in $\ker(\square)_+$, but \[(\varepsilon_1-\varepsilon_2)+(\varepsilon_3-\varepsilon_4)+(\varepsilon_3-\varepsilon_2)=\varepsilon_1-2\varepsilon_2+2\varepsilon_3-\varepsilon_4\] and $(\varepsilon_1-2\varepsilon_2+2\varepsilon_3-\varepsilon_4)^\kgf\in\mathfrak{z}(\mathfrak{g}_0)$ is a scaling element, so the construction does not work for it.

This case also has lowest weight vectors whose duals via $\kgf$ are not scaling elements. As an example, consider \[\Omega=(\eta_{\varepsilon_2-\varepsilon_3})_\kgf\wedge(\eta_{\varepsilon_2-\varepsilon_4})_\kgf\otimes\eta_{\varepsilon_2-\varepsilon_1}.\] Here, the weight is \[(\varepsilon_2-\varepsilon_3)+(\varepsilon_2-\varepsilon_4)+(\varepsilon_2-\varepsilon_1)=-\varepsilon_1+3\varepsilon_2-\varepsilon_3-\varepsilon_4=4\varepsilon_2,\] and $\kgf(4\varepsilon_2,\varepsilon_1-\varepsilon_3)=0$, so $(4\varepsilon_2)^\kgf\in\mathfrak{z}(\mathfrak{g}_0)$ is not a scaling element. Using Proposition \ref{centralnotscaling}, we can pick $\alpha=\varepsilon_1-\varepsilon_3$ and $\nu_0=\varepsilon_2-\varepsilon_4$ to get \[\dot{a}_0=(\varepsilon_1-\varepsilon_3)^\kgf\in\ker(4\varepsilon_2)\cap\ker(\varepsilon_2-\varepsilon_4).\] Moreover, $\kgf(4\varepsilon_2,\varepsilon_i-\varepsilon_j)\leq 1/2$ and \[\frac{\kgf(4\varepsilon_2,4\varepsilon_2)}{4\varepsilon_2(E_\text{gr})}=\frac{(3/2)}{2}=\frac{3}{4}>\frac{1}{2},\] so \[\dot{c}_0=\mathrm{pr}_{\ker(4\varepsilon_2)}(E_\text{gr})=\frac{1}{3}\begin{pmatrix}5 & 0 & 0 & 0 \\ 0 & 0 & 0 & 0 \\ 0 & 0 & -1 & 0 \\ 0 & 0 & 0 & -4\end{pmatrix}\] satisfies $\nu(\dot{c}_0)>0$ for all $\nu\in\Delta^+(\mathfrak{p}_+)$. Thus, we get a corresponding infinite family of regular normal Cartan geometries of type $(\PGL_4\mathbb{R},B)$ with essential automorphisms and nonvanishing curvature over a closed base manifold by Theorem \ref{stumps} and Proposition \ref{distinct}.

\subsection{Path geometries}\label{pathgeom}
Again, we take our Lie group $G=\PGL_m\mathbb{R}$ (with $m>3$), but with \[P=\left\{\begin{pmatrix}r & t & p \\ 0 & s & q \\ 0 & 0 & A\end{pmatrix}:~\begin{matrix}p^\top\!,q^\top\in\mathbb{R}^{m-2},~r,s\in\mathbb{R}^\times, \\ t\in\mathbb{R},\text{ and }A\in\GLin_{m-2}\mathbb{R}\end{matrix}\right\}.\] In this case, regular normal Cartan geometries of type $(G,P)$ correspond to path geometries. These are split-real $|2|$-graded geometries, which we can grade using \[E_\text{gr}=\begin{pmatrix}2 & 0 & 0 \\ 0 & 1 & 0 \\ 0 & 0 & 0\end{pmatrix}=\frac{1}{m}\begin{pmatrix}2m-3 & 0 & 0 \\ 0 & m-3 & 0 \\ 0 & 0 & -3\mathds{1}\end{pmatrix}\in\mathfrak{pgl}_m\mathbb{R}\] as the grading element. Taking $(\theta,\mathfrak{c},\mathfrak{a},\Delta,\Delta^+)$ as in \ref{almostgrassmannian}, we get a compatible (restricted) root system for $(G,P)$.

Because $G$ is, again, split-real, we can use Yamaguchi's list \cite{Yamaguchi2} to get a lowest weight vector in $\ker(\square)_+$. For example, we can use \[\Omega=(\eta_{\varepsilon_2-\varepsilon_3})_\kgf\wedge(\eta_{\varepsilon_1-\varepsilon_3})_\kgf\otimes\eta_{\varepsilon_m-\varepsilon_3}\in\ker(\square)_+.\] The corresponding weight is \[(\varepsilon_2-\varepsilon_3)+(\varepsilon_1-\varepsilon_3)+(\varepsilon_m-\varepsilon_3)=\varepsilon_1+\varepsilon_2+\varepsilon_m-3\varepsilon_3,\] and $(\varepsilon_1+\varepsilon_2+\varepsilon_m-3\varepsilon_3)^\kgf\not\in\mathfrak{z}(\mathfrak{g}_0)$ is not a scaling element. For $\alpha=\varepsilon_1-\varepsilon_m$ and $\nu_0=\varepsilon_2-\varepsilon_3$, \[\kgf(\alpha,\nu_0)=\kgf(\varepsilon_1+\varepsilon_2+\varepsilon_m-3\varepsilon_3,\alpha)=0,\] so $\dot{a}_0=\alpha^\kgf+0\in\ker(\varepsilon_1+\varepsilon_2+\varepsilon_m-3\varepsilon_3)\cap\ker(\nu_0)$.

When picking $\dot{c}_0$, note that \[\kgf(\varepsilon_1+\varepsilon_2+\varepsilon_m-3\varepsilon_3,\varepsilon_2-\varepsilon_3)=\frac{1+3}{2m}=\frac{2}{m}\] and \[\frac{\kgf(\varepsilon_1+\varepsilon_2+\varepsilon_m-3\varepsilon_3,\varepsilon_1+\varepsilon_2+\varepsilon_m-3\varepsilon_3)}{(\varepsilon_1+\varepsilon_2+\varepsilon_m-3\varepsilon_3)(E_\text{gr})}=\frac{(6/m)}{3}=\frac{2}{m}\not>\frac{2}{m},\] so $\mathrm{pr}_{\ker(\varepsilon_1+\varepsilon_2+\varepsilon_m-3\varepsilon_3)}(E_\text{gr})$ is not a valid choice of $\dot{c}_0$ in this case. However, \[\dot{c}_0=E_\text{gr}-3\begin{pmatrix}0 & \cdots & 0 & 0 \\ \vdots & \ddots & \vdots & \vdots \\ 0 & \cdots & 0 & 0 \\ 0 & \cdots & 0 & 1\end{pmatrix}\] satisfies \[(\varepsilon_1+\varepsilon_2+\varepsilon_m-3\varepsilon_3)(\dot{c}_0)=2+1-3-0=0,\] $(\varepsilon_1-\varepsilon_2)(\dot{c}_0)=2-1=1>0$, and \[(\varepsilon_2-\varepsilon_i)(\dot{c}_0)=\left\{\begin{matrix}1 & \text{if }2<i<m \\ 4 & \text{if }i=m,\end{matrix}\right.\] so $\nu(\dot{c}_0)>0$ for all $\nu\in\Delta^+(\mathfrak{p}_+)$, hence $\dot{c}_0$ is a valid choice, giving us an infinite family of regular normal Cartan geometries of type $(\PGL_m\mathbb{R},P)$ with essential automorphisms and nonvanishing curvature over a closed base manifold by Theorem \ref{stumps} and Proposition \ref{distinct}.

\subsection{Quaternionic contact geometries of signature $(m,n)$}
Throughout, we denote by $\hat{i}$, $\hat{j}$, and $\hat{k}$ the usual elements of $\mathbb{H}$, with hats to help distinguish them from indices. For $n\geq m>1$, let $G$ be the subgroup of $\GLin_{m+n+2}\mathbb{H}$ preserving the $\mathbb{R}$-valued quadratic form \[v\mapsto \overline{v_1}v_{m+n+2}+\overline{v_{m+n+2}}v_1+\left(\sum_{\ell=1}^m |v_{\ell+1}|^2\right)-\left(\sum_{\ell=1}^n |v_{m+1+\ell}|^2\right)\] on $\mathbb{H}^{m+n+2}$, and let $P$ be the closed subgroup of $G$ preserving the subspace $\mathbb{H}e_1\subseteq\mathbb{H}^{m+n+2}$, where $e_\ell$ denotes the element whose $\ell^\text{th}$ entry is $1$ and with all other entries equal to $0$. At the level of Lie algebras, \[\mathfrak{g}=\left\{\begin{bmatrix}a & p & q & c \\ v & R_1 & \bar{S}^\top & -\bar{p}^\top \\ w & S & R_2 & \bar{q}^\top \\ z & -\bar{v}^\top & \bar{w}^\top & -\bar{a}\end{bmatrix}:\begin{matrix}a\in\mathbb{H},~ v,p^\top\in\mathbb{H}^m,~ w,q^\top\in\mathbb{H}^n, \\ R_1\in\mathfrak{sp}(m),~ R_2\in\mathfrak{sp}(n), \\ S\in M_{n\times m}(\mathbb{H}),\text{ and }c,z\in\mathrm{Im}(\mathbb{H})\end{matrix}\right\}\] and \[\mathfrak{p}=\left\{\begin{bmatrix}a & p & q & c \\ 0 & R_1 & \bar{S}^\top & -\bar{p}^\top \\ 0 & S & R_2 & \bar{q}^\top \\ 0 & 0 & 0 & -\bar{a}\end{bmatrix}:\begin{matrix}a\in\mathbb{H},~ p^\top\in\mathbb{H}^m,~ q^\top\in\mathbb{H}^n, \\ R_1\in\mathfrak{sp}(m),~ R_2\in\mathfrak{sp}(n), \\ S\in M_{n\times m}(\mathbb{H}),\text{ and }c\in\mathrm{Im}(\mathbb{H})\end{matrix}\right\}.\] Here, regular normal Cartan geometries of type $(G,P)$ correspond to quaternionic contact structrures of signature $(m,n)$. These are $|2|$-graded, but not split-real; the grading element is given by \[E_\text{gr}=\begin{bmatrix}1 & 0 & 0 & 0 \\ 0 & 0 & 0 & 0 \\ 0 & 0 & 0 & 0 \\ 0 & 0 & 0 & -1\end{bmatrix}.\]
Let us define $\theta:X\mapsto-\bar{X}^\top$, \[\mathfrak{c}=\left\{\begin{bmatrix}a & 0 & 0 & 0 & 0 \\ 0 & L_1 & D & 0 & 0 \\ 0 & D & L_1 & 0 & 0 \\ 0 & 0 & 0 & L_2 & 0 \\ 0 & 0 & 0 & 0 & -\bar{a}\end{bmatrix}:~\begin{matrix}a\in\mathbb{C}<\mathbb{H},~ D,\hat{i}L_1\in M_{m\times m}(\mathbb{R}),~ \\ \text{and }\hat{i}L_2\in M_{(n-m)\times(n-m)}(\mathbb{R}), \\ \text{with }D,L_1,\text{ and }L_2\text{ diagonal}\end{matrix}\right\},\] where we identify $\mathbb{C}$ with the $\mathbb{R}$-subalgebra $\{a+b\hat{i}:a,b\in\mathbb{R}\}$ of $\mathbb{H}$, and \[\mathfrak{a}=\left\{\begin{bmatrix}a & 0 & 0 & 0 & 0 \\ 0 & 0 & D & 0 & 0 \\ 0 & D & 0 & 0 & 0 \\ 0 & 0 & 0 & 0 & 0 \\ 0 & 0 & 0 & 0 & -a\end{bmatrix}:~\begin{matrix}a\in\mathbb{R}\text{ and }D\in M_{m\times m}(\mathbb{R}) \\ \text{with }D\text{ diagonal}\end{matrix}\right\}<\mathfrak{c},\] so that $E_\text{gr}\in\mathfrak{a}$. Further, for \[D=\begin{bmatrix}d_1 & 0 & \cdots & 0 \\ 0 & d_2 & \cdots & 0 \\ \vdots & \vdots & \ddots & \vdots \\ 0 & 0 & \cdots & d_m\end{bmatrix},\] let us define $\varepsilon_\ell\in\mathfrak{a}^\vee$ by \[\varepsilon_\ell\left(\left[\begin{smallmatrix}a & 0 & 0 & 0 & 0 \\ 0 & 0 & D & 0 & 0 \\ 0 & D & 0 & 0 & 0 \\ 0 & 0 & 0 & 0 & 0 \\ 0 & 0 & 0 & 0 & -a\end{smallmatrix}\right]\right)=\left\{\begin{matrix}a & \text{if }\ell=0, \\ d_\ell & \text{if }1\leq\ell\leq m\end{matrix}\right.\] Then, for $\Delta=\{\pm(\varepsilon_{\ell_1}-\varepsilon_{\ell_2}),\pm(\varepsilon_{\ell_1}+\varepsilon_{\ell_2}):0\leq\ell_1<\ell_2\leq m\}\cup\{\pm 2\varepsilon_\ell,\pm\varepsilon_\ell:0\leq\ell\leq m\}$ and $\Delta^+=\{\varepsilon_{\ell_1}-\varepsilon_{\ell_2},\varepsilon_{\ell_1}+\varepsilon_{\ell_2}:0\leq\ell_1<\ell_2\leq m\}\cup\{2\varepsilon_\ell,\varepsilon_\ell:0\leq\ell\leq m\}$ when $m<n$, and $\Delta=\{\pm(\varepsilon_{\ell_1}-\varepsilon_{\ell_2}),\pm(\varepsilon_{\ell_1}+\varepsilon_{\ell_2}):0\leq\ell_1<\ell_2\leq m\}\cup\{\pm 2\varepsilon_\ell:0\leq\ell\leq m\}$ and $\Delta^+=\{\varepsilon_{\ell_1}-\varepsilon_{\ell_2},\varepsilon_{\ell_1}+\varepsilon_{\ell_2}:0\leq\ell_1<\ell_2\leq m\}\cup\{2\varepsilon_\ell:0\leq\ell\leq m\}$ when $m=n$, $(\theta,\mathfrak{c},\mathfrak{a},\Delta,\Delta^+)$ is a compatible restricted root system for $(G,P)$.

Let us pick $\beta=\gamma=\varepsilon_0-\varepsilon_1\in\Delta^+(\mathfrak{p}_+)$ and $\zeta=-2\varepsilon_1\in -\Delta^+$, and write \[\eta_{\varepsilon_0-\varepsilon_1,q}:=\begin{bmatrix}0 & qe_1^\top & qe_1^\top & 0 \\ 0 & 0 & 0 & -\bar{q}e_1 \\ 0 & 0 & 0 & \bar{q}e_1 \\ 0 & 0 & 0 & 0\end{bmatrix}\] for $q\in\mathbb{H}$ and \[\eta_{-2\varepsilon_1,q}:=\begin{bmatrix}0 & 0 & 0 & 0 \\ 0 & qe_1e_1^\top & qe_1e_1^\top & 0 \\ 0 & -qe_1e_1^\top & -qe_1e_1^\top & 0 \\ 0 & 0 & 0 & 0\end{bmatrix}\] for $q\in\mathrm{Im}(\mathbb{H})$. Then, \begin{align*}\Omega & =(\eta_{\beta,1})_\kgf\wedge(\eta_{\gamma,\hat{i}})_\kgf\otimes\eta_{\zeta,\hat{j}}+(\eta_{\beta,\hat{k}})_\kgf\wedge(\eta_{\gamma,\hat{j}})_\kgf\otimes\eta_{\zeta,\hat{j}} \\ & \quad+(\eta_{\beta,1})_\kgf\wedge(\eta_{\gamma,\hat{j}})_\kgf\otimes\eta_{\zeta,\hat{i}}-(\eta_{\beta,\hat{k}})_\kgf\wedge(\eta_{\gamma,\hat{i}})_\kgf\otimes\eta_{\zeta,\hat{i}}\end{align*} is a lowest weight vector of positive homogeneity in $\ker(\square)$. Since $\zeta\not\in\Delta^+(\mathfrak{p}_+)$ and $\zeta\neq -(\varepsilon_0-\varepsilon_1)$, $\Omega$ is a harmonic seed by Theorem \ref{main}.

Moreover, $\beta+\gamma+\zeta=2\varepsilon_0-4\varepsilon_1$, and since $(2\varepsilon_0-4\varepsilon_1)^\kgf\not\in\mathfrak{z}(\mathfrak{g}_0)=\langle E_\text{gr}\rangle$, it is not a scaling element, so for each $\alpha\in\Delta^+(\mathfrak{p}_+)$, we can find $\nu_0\in\Delta^+(\mathfrak{p}_+)$ and $R\in\mathfrak{g}_0^\text{ss}\cap\mathfrak{a}$ such that $\dot{a}_0=\alpha^\kgf+R\in\ker(\beta+\gamma+\zeta)\cap\ker(\nu_0)$ by Proposition \ref{bigg0case}. For $\dot{c}_0$, note that $\kgf(\varepsilon_0\pm\varepsilon_1,2\varepsilon_0-4\varepsilon_1)=\frac{2\mp 4}{8(m+n+3)}$, \[\kgf(\varepsilon_0\pm\varepsilon_\ell,2\varepsilon_0-4\varepsilon_1)=\kgf(\varepsilon_0,2\varepsilon_0-4\varepsilon_1)=\frac{2}{8(m+n+3)}\] for $\ell>1$, and \[\frac{\kgf(2\varepsilon_0-4\varepsilon_1,2\varepsilon_0-4\varepsilon_1)}{(2\varepsilon_0-4\varepsilon_1)(E_\text{gr})}=\frac{10}{8(m+n+3)},\] so by Proposition \ref{shrink}, $\dot{c}_0=\mathrm{pr}_{\ker(2\varepsilon_0-4\varepsilon_1)}(E_\text{gr})$ satisfies $\nu(\dot{c}_0)>0$ for each $\nu\in\Delta^+(\mathfrak{p}_+)$. Thus, by Theorem \ref{stumps} and Proposition \ref{distinct}, this gives an infinite family of regular normal Cartan geometries of type $(G,P)$ with nonvanishing curvature and essential automorphisms over a closed manifold.

\end{document}